\newcommand{\Arrow}[1]{%
\parbox{#1}{\tikz{\draw[->](0,0)--(#1,0);}}}
\newcolumntype{L}[1]{>{\raggedright\let\newline\\\arraybackslash\hspace{0pt}}m{#1}}
\newcolumntype{C}[1]{>{\centering\let\newline\\\arraybackslash\hspace{0pt}}m{#1}}
\newcolumntype{R}[1]{>{\raggedleft\let\newline\\\arraybackslash\hspace{0pt}}m{#1}}
\newcommand{\sign}{\text{sign}}
\newtheorem{theorem}{Theorem}[section]
\newtheorem{proposition}[theorem]{Proposition}
\newtheorem{lemma}[theorem]{Lemma}
\newtheorem{remark}[theorem]{Remark}
\newtheorem{assumption}[theorem]{Assumption}
\newcommand{\E}{\mathbb{E}}
\newcommand{\R}{\mathbb{R}}
\newcommand{\Pro}{\mathbb{P}}
\newcommand{\X}{\mathbf{X}}
\newcommand{\x}{\mathbf{x}}
\newcommand{\W}{\mathbf{W}}
\numberwithin{equation}{section}
\title{Stochastic Maximum Principle for Optimal Liquidation with Control-dependent Terminal Time}
\author[1]{Riccardo Cesari\thanks{Department of Mathematics, Imperial College, London SW7 2BZ, UK. \href{riccardo.cesari17@imperial.ac.uk}{riccardo.cesari17@imperial.ac.uk} } \ and   Harry Zheng\thanks{Department of Mathematics, Imperial College, London SW7 2BZ, UK. \href{h.zheng@imperial.ac.uk}{h.zheng@imperial.ac.uk}. Supported in part by the EPSRC (UK) Grant (EP/V008331/1).   }}
\date{}
\begin{document}
\allowdisplaybreaks
\maketitle
\noindent\textbf{Abstract}. In this paper we study a general optimal liquidation problem with a control-dependent stopping time which is the first time the stock holding becomes zero or a fixed terminal time, whichever comes first. We prove a stochastic maximum principle (SMP) which is markedly different in its Hamiltonian condition from that of the standard SMP  with fixed terminal time. We present a simple example in which the optimal solution satisfies the SMP in this paper but fails the standard SMP in the literature. 

 \medskip
\noindent\textbf{Keywords}. Stochastic maximum principle, control-dependent terminal time, optimal liquidation, variational analysis, backwards stochastic differential equations.

\medskip
\noindent\textbf{AMS MSC2010}: 49K45, 91G80, 93E20.

\section{Introduction}
Two main approaches to solving stochastic optimal control problems are the partial differential equation (PDE) method based on the dynamic programming principle (DPP) and the backward stochastic differential equation (BSDE) method based on the stochastic maximum principle (SMP), see Fleming and Soner~\cite{FlemingWendellH2006CMPa}, Yong and Zhou~\cite{YongZhou1999} for expositions. Most literature is on fixed terminal time problems. When the underlying state process is a controlled diffusion process, one may find the optimal solution with the Hamilton-Jacobi-Bellman (HJB) equation (a nonlinear PDE) or the BSDE (a coupled forward-backward system connected with the Hamiltonian condition). When the underlying state process is a controlled jump-diffusion process, one may find the optimal solution with the HJB partial differential difference equation or the BSDE, see Oksental and Sulem \cite{Oeksendal2007} for model formulations and solution methodologies. When the terminal time is not fixed but a random stopping time that is determined directly by a decision-maker,  one has an optimal stopping problem or a combined optimal control/stopping problem and may find the optimal solution with the HJB variational equation or the reflected BSDE, see Pham~\cite{PhamHuyen2009CSCa} for an excellent concise introduction of this and other topics above,  see also  Barbu and R\"{o}ckner~\cite{Barbu2016}, Cordoni et al.~\cite{Cordoni2020}, Diomande and Maticiuc~\cite{Diomande2017},  Popier and Zhou~\cite{Popier2019} for results on existence and uniqueness of forward backward and delayed stochastic differential equations and second order BSDEs and applications.

In this paper we investigate a stochastic optimal control problem with a random terminal time. In contrast to optimal stopping problem, the terminal time is indirectly determined by control strategies. Specifically, we consider an optimal liquidation problem in which the terminal time is determined by the first time the stock holding becomes zero or a fixed terminal time, whichever comes first. The objective is to maximize the expected cash value of the liquidation at the terminal time subject to some other underlying state process (stock price, volatility, etc.) dynamics. Such a model cannot be cast into the framework of the optimal stopping problem as stopping time is not directly controlled nor the jump-diffusion model with the fixed terminal time as stopping time is random. 
  For a Markovian model, one can show that the value function satisfies the HJB equation with the boundary condition when the stock holding is zero as well as the terminal condition at the fixed terminal time, 
see for example Cartea et al.~\cite{CarteaAlvaroauthor2015Aaht}. Since the HJB equation is a nonlinear PDE, one can find the closed-form solution or show the existence of a classical solution only for some specific models and has to rely on the viscosity solution concept for general models. For a non-Markovian model or with control constraints, the HJB approach loses its tractability.  On the other hand, the SMP approach provides an alternative way of solving the problem for general, possibly non-Markovian, models.  However, the standard SMP only applies to the problem with fixed terminal time. The first huddle we must overcome is to find the form of the BSDE and the associated Hamiltonian condition for the problem with random control-dependent terminal time.  

 There have been some efforts in the literature to address optimal trading. Ankirchner et al.~\cite{AnkirchnerStefan2014BwST} use the BSDE approach to solving the singular terminal state problem in which the terminal inventory is forced to be zero at terminal time, differently from our setting in which this constraint is weakened. Horst and Naujokat~\cite{Horst2014} state a version of the SMP  for optimal trading strategy with the spread driven by a jump diffusion process. Pham~\cite{PhamHuyen2010Scup} studies a model with multiple stopping times, a special form of the jump-diffusion process, and characterizes the value function with a system of backward recursive dynamic programming equations and the optimal control with progressive enlargement of filtration.

Cordoni and Di Persio~\cite{Cordoni2018} study a similar model to \cite{PhamHuyen2010Scup} and derive a system of backward recursive BSDEs that are similar to the standard SMP over adjacent stopping times intervals. There is, however, a key difference in the stopping time definitions in Pham~\cite{PhamHuyen2010Scup} and Cordoni and Di Persio~\cite{Cordoni2018}. The former is independent of controls and is given by some driving jump processes whereas the latter depends on controls and is given by the first time the underlying controlled state process hits some deterministic boundaries.   In the standard derivation of SMP with fixed terminal time (c.f. Bensoussan~\cite{Bensoussan2006} and Pham~\cite{PhamHuyen2009CSCa}), one may use the optimality condition and the variation of the optimal control to derive the BSDE and the Hamiltonian condition. When the terminal time is a stopping time depending on control, then variational analysis involves changes in terminal time as well as underlying state variables, unlike that for the standard SMP that only involves changes in underlying state variables. 
This aspect is the most arduous difficulty that needs to be overcome in the proof of the SMP with control dependent terminal time.
Cordoni and Di Persio~\cite{Cordoni2018} prove the SMP without discussing the possibility of changes of terminal stopping times due to changes of controls, see for example \cite[equation (15)]{Cordoni2018} in the proof of necessary SMP and \cite[equation (24)]{Cordoni2018} in the proof of sufficient SMP, which 
implies the SMP in \cite{Cordoni2018} is only valid for a model with stopping times {\it independent} of controls, same as that of \cite{PhamHuyen2010Scup}, but invalid for control-dependent stopping times, that is, the BSDE and the Hamiltonian condition in  \cite{Cordoni2018} are not applicable to the stopping time definition there.   

The main contribution of this paper is to give the SMP in the presence of random control-dependent terminal time, the first in the literature to the best knowledge of the authors. The main theorem (Theorem \ref{SMPtheorem}) states that the adjoint process satisfies a standard BSDE (see (\ref{defY})) with the terminal time the optimal stopping time, not necessarily the fixed terminal time, and the Hamiltonian function is also a standard one (see (\ref{defHcal})), but the Hamiltonian condition is markedly different from that of the standard  SMP with fixed terminal time (see (\ref{SMP})), specifically, we need to add a supplementary nonlinear term that is not additively separable between the optimal control and any other controls. This additional term counts for the random control-dependent terminal time. We give a simple example to show that the optimal solution satisfies the SMP in  Theorem \ref{SMPtheorem} but not the standard SMP, e.g., the one in \cite{PhamHuyen2009CSCa}. The SMP in this paper only applies to 
the optimal liquidation problem, but the idea of the variational analysis involving controlled stopping time may be explored further for more general models with other applications, for example, the optimal path planning problem that steers autonomous vehicles to navigate between any two points while optimizes energy, time, etc., see  Lee et al.~\cite{Lee2015} and Subramani et al~\cite{Subramani2018}.

The rest of the paper is organized as follows. In Section \ref{modelsetup} we describe the model and state the main result (Theorem \ref{SMPtheorem}) that is the SMP with random control-dependent terminal time. In Section \ref{sectionstochexample} we present an example to illustrate the main result and show the standard SMP does not hold.  In Section \ref{sectionproofs} we prove Theorem \ref{SMPtheorem}. Section \ref{sectionconclusion} concludes.

\section{Model setup}
\label{modelsetup}
Let $(\Omega, \mathcal{F}, (\mathcal{F}_t)_{t\in [0,T]}, \Pro)$ be a filtered probability space, where $(\mathcal{F}_t)_{t\in [0,T]}$ is the natural filtration generated by an $m$-dimensional standard Brownian motion $\W$, augmented by all $\Pro$-null sets. Let $T$ be the fixed terminal time. Let $(\pi_t)_{t\in[0,T]}$ denote the rate of selling the stock, which is a decision (control) variable selected by the agent and is said admissible if it is a progressively measurable, non-negative, right-continuous and square integrable process. Denote by $\mathcal{A}$ the set of all admissible control processes. We consider $\pi$ to be the liquidation rate of the inventory $Q_t$, defined as 
\begin{equation}
\label{defQ}
Q_t^\pi=q_0-\int^t_0 \pi_r \; dr .
\end{equation}
Let $(\X_t)_{t\in[0,T]}$ be an $\R^n$ valued stochastic process satisfying the following stochastic differential equation (SDE):
\begin{equation}
\label{defX}
d\X_t=\bm{\mu}(t,\X_t)dt+\bm{\sigma}(t,\X_t)d\W_t,
\end{equation}
with initial condition $\X_0=\x$, where $\bm{\mu}:[0,T]\times \R^n\to \R^n$, $\bm{\sigma}:[0,T]\times \R^n\to \R^{n\times m}$ are two continuous functions. $\X$ represents the market information such as stock price, volatility, etc., and is not influenced by the control process $\pi$  (liquidation without price impact). The optimal liquidation problem is defined by
\begin{equation}
\label{optproblem}
\sup_{\pi\in \mathcal{A}}\E\bigg[ g( \X_{\tau^\pi}, Q_{\tau^\pi}^\pi)+ \int_0^{\tau^\pi} f(r, \pi_r, \X_r, Q_r^\pi) \;dr \bigg],
\end{equation}
where $g:\R^n\times \R\to \R$ and $f:[0,T]\times [0,\infty)\times \R^n\times \R\to \R$ are two continuously differentiable functions representing the terminal and running payoffs, $\tau^\pi$ is a stopping time defined by
\begin{equation}
\label{deftau}
\tau^\pi=T\wedge \min\{r\ge 0\; | \; Q_r^\pi=0\},
\end{equation}
the first time when all stock is liquidated or fixed terminal time $T$, whichever comes first. 

To simplify the notation we consider a one-dimensional process $X$, but all results can be obtained in the multi-dimensional case. We denote the state space of the pair $(X_r,Q_r)$ as $\mathcal{O}:=\R\times[0,q_0]$. In the following, with a slight abuse of notations, we denote $\pi_r$ as $\pi_r\mathds{1}_{r\le\tau^\pi}$, which equals $0$ after $\tau^\pi$. Similarly, $Q^\pi_r=0$ for $r> \tau^\pi$. Whenever we refer to a time interval $[a, b)$, if $a\ge b$, then we consider it to be an empty set.

To state and prove a necessary SMP for problem $\eqref{optproblem}$,  we follow the procedure in Bensoussan~\cite{Bensoussan2006}. Assume that $c$ is the optimal control and $Q$ and $\tau$ are the corresponding optimal  inventory and stopping time,  defined in $\eqref{defQ}$ and $\eqref{deftau}$ respectively. If $Q_0=0$ then $\tau=0$ and there is nothing to discuss. We assume $Q_0>0$ which implies $\tau>0$. We next define the variation of the optimal control $c$.
 For fixed $t\in[0,\tau)$ and $\bar{c} \ge 0$, we have $q:=Q_t>0$. Choose $\theta\in\left(0,(T-t)\wedge\frac{q}{\bar{c}}\right)$,  that is, $t<t+\theta<T$ and $q-\bar c\theta>0$, consider a variation of $c$ as follows:
\begin{equation}
\label{defctheta}
c^{\theta,\bar{c}, t}_r:=c_r\mathds{1}_{r\in[0, t)} +   \bar{c}\mathds{1}_{r\in[t, (t+\theta)\wedge \tau)}+c_r\mathds{1}_{r\in[t+\theta,\tau)}-\frac{\gamma^{\theta,\bar{c}, t}_{t+\theta}}{\theta}\mathds{1}_{r\in[\tau, +\infty)},
\end{equation}
where  $\mathds{1}_S$ is an indicator that equals 1 if $S$ is true and 0 otherwise, and for $r\geq t$, 
\begin{equation}
\label{defgammatheta}
\gamma^{\theta,\bar{c}, t}_r:=\int_t^{r\wedge\tau}(\bar{c}-c_s)ds.
\end{equation}
The control $c^{\theta,\bar{c}, t}$ in $\eqref{defctheta}$ is  an admissible control, see Lemma \ref{lemmaoncthetaadmissible}. Let $Q^{\theta,\bar{c}, t}_r$ be the corresponding inventory under the control $c^{\theta,\bar{c}, t}$, given by  (\ref{defQ}), 
and  $\tau^{\theta,\bar{c}, t}$ be the first hitting time $r$ when the inventory $Q^{\theta,\bar{c}, t}_r$ gets to $0$,  given by (\ref{deftau}).

Since $c^{\theta,\bar{c}, t}_r=c_r$ for $r\in[0, t)$, we have $Q^{\theta,\bar{c}, t}_r=Q_r$ for $r\in[0, t)$, in particular, $Q^{\theta,\bar{c}, t}_t=Q_t$
and $\tau^{\theta,\bar{c}, t}>t$.  In fact, with the condition on $\theta$, $Q^{\theta,\bar{c}, t}_{(t+\theta)\wedge \tau} = Q_t - \bar c((t+\theta)\wedge \tau-t)\geq q-\bar c\theta>0$, so $\tau^{\theta,\bar{c}, t}>(t+\theta)\wedge \tau$.
The term $\gamma^{\theta,\bar{c}, t}_{t+\theta}$ represents the difference of the total liquidation on the time interval $[t,(t+\theta) \wedge\tau]$ with the constant control $\bar c$ and with the optimal control $c$, which determines the relation of $\tau$ and $\tau^{\theta,\bar{c}, t}$. If $\gamma^{\theta,\bar{c}, t}_{t+\theta}>0$ then $t+\theta<\tau^{\theta,\bar{c}, t}<\tau$ when $t+\theta<\tau<T$, specifically,  
$\tau^{\theta,\bar{c}, t}$ is  the time $r$ when the optimal inventory $Q_r$ is equal to $\gamma^{\theta,\bar{c}, t}_{t+\theta}$, see Figure \ref{figposeps}. 
If $\gamma^{\theta,\bar{c}, t}_{t+\theta}<0$ then $\tau^{\theta,\bar{c}, t}>\tau$, see Figure \ref{fignegeps}.
Note that we denote $c_r^{\theta,\bar{c}, t}$ as $c_r^{\theta,\bar{c}, t}\mathds{1}_{r\le\tau^{\theta,\bar{c}, t}}$, which equals $0$ after $\tau^{\theta,\bar{c}, t}$. We see that $c_r^{\theta,\bar{c}, t}$ is well defined in $\eqref{defctheta}$. Indeed, when $\tau\ge\tau^{\theta,\bar{c}, t}$, the last term in $\eqref{defctheta}$ disappears, while if $\tau<\tau^{\theta,\bar{c}, t}$, the quantity $\gamma^{\theta,\bar{c}, t}_{t+\theta}$ is negative, making the last term in $\eqref{defctheta}$ a non-negative term.

\begin{figure}[H]
\centering
\begin{subfigure}{.45\textwidth}
  \includegraphics[width=\textwidth]{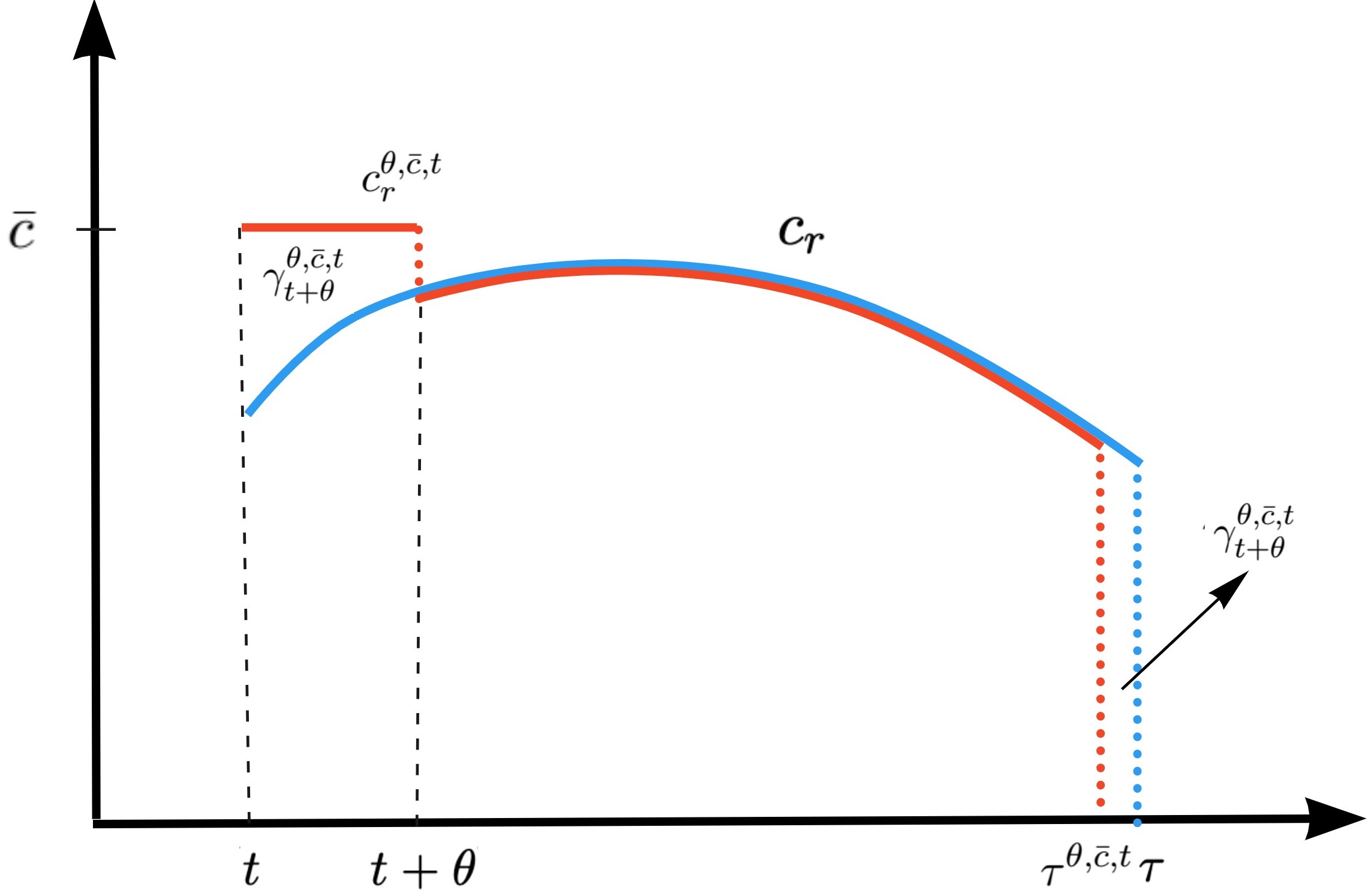}
  \caption{Case when  $\gamma^{\theta,\bar{c}, t}_{t+\theta}>0$.}
  \label{figposeps}
  \end{subfigure}%
\begin{subfigure}{.45\textwidth}
  \includegraphics[width=1.1\textwidth]{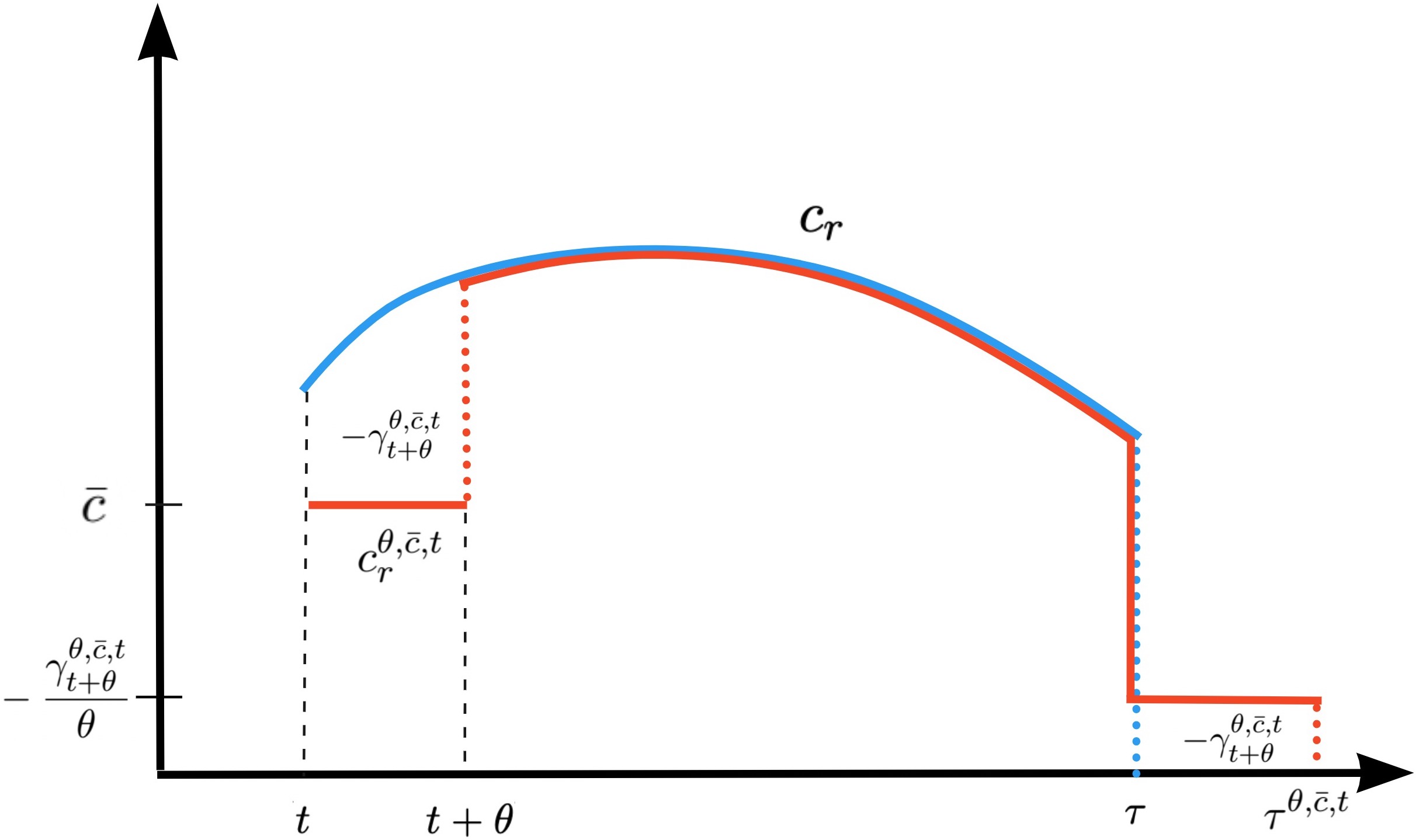}
  \caption{Case when  $\gamma^{\theta,\bar{c}, t}_{t+\theta}<0$.}
  \label{fignegeps}
  \end{subfigure}
\caption{Graphical examples of $c$ (in blue) and $c^{\theta,\bar{c}, t}$ (in red).}
\end{figure}
Let $(Y_r,Z_r)_{r\in[0,\tau]}$ be a solution of the following BSDE:
\begin{equation}
\label{defY}
\begin{cases}
-dY_r=\partial_q f(r, c_r,X_r,Q_r) dr-Z_r dW_r\\
Y_\tau=\partial_q g( X_\tau, Q_\tau).
\end{cases}
\end{equation}
The processes $(Y,Z)$ are conventional in the usual SMP formulation (c.f. Bensoussan~\cite{Bensoussan2006} and Pham~\cite{PhamHuyen2009CSCa}). BSDE $\eqref{defY}$ has a random terminal time $\tau$, in contrast to the standard BSDE with fixed terminal time.  This type of BSDEs has been studied in the literature, c.f. Darling and Pardoux~\cite{darling1997} and in Wu~\cite{wu_2003}.

 We assume $\mu, \sigma, f, g$ satisfy the following conditions for some  positive constant $K$.
\begin{assumption}
\label{secassumptions}
For any $t,t'\in[0,T]$, $\pi,\pi'\ge 0$, $x,x'\in\R$, $q,q'\ge 0$,
\begin{equation}
\label{assumptions}
\begin{split}
&\left| \mu(t,x)-\mu(t,x') \right| + \left| \sigma(t,x)-\sigma(t,x') \right|\le K\left| x-x' \right|,\\
&\left| \mu(t,x) \right| + \left| \sigma(t,x) \right|\le K\left( |x|+1 \right),\\
&\left| g(x,q)-g(x,q') \right| \le K(1+\left| x \right|)\left| q-q' \right|,\\
&\left| f(t,\pi,x,q)-f(t',\pi,x,q') \right| \le K\left(\left| q-q' \right|+\left| t-t' \right|\right),\\
&\left| f(t,\pi,x,q)-f(t,\pi',x',q) \right| \le K\left(\left| x-x' \right|+\left| \pi-\pi' \right|\right)\left(1+|x|+|x'| +|\pi|+|\pi'| \right),\\
&\left| \partial_q f(t,\pi,x,q)-\partial_q f(t,\pi,x,q') \right|+\left| \partial_q g(x,q)-\partial_q g(x,q') \right| \le K|q-q'|.
\end{split}
\end{equation}
\end{assumption}

Define the Hamiltonian as
\begin{equation}
\label{defHcal}
\mathcal{H}(t,\pi,x,q,y):=-\pi y+f(t,\pi,x,q).
\end{equation}
Denote by 
\begin{equation*}
\begin{split}
&\tau_{\min}^{\theta,\bar{c}, t}:=\min\left(\tau, \tau^{\theta,\bar{c}, t}\right),\ \tau_{\max}^{\theta,\bar{c}, t}:=\max\left(\tau, \tau^{\theta,\bar{c}, t}\right),\\
& \hat{Q}^{\theta,\bar{c}, t}_r:=\max\left(Q_r, Q^{\theta,\bar{c}, t}_r\right),\ \hat{c}^{\theta,\bar{c}, t}_r:=\max\left(c_r, c^{\theta,\bar{c}, t}_r\right).
\end{split}
\end{equation*}
We now state the stochastic maximum principle for problem $\eqref{optproblem}$.
\begin{theorem}
\label{SMPtheorem}
Let Assumption \ref{secassumptions} be satisfied. Let $(c_r)_{r\in[0,T]}$ be the optimal control for  problem $\eqref{optproblem}$, satisfying  
\begin{equation}
\label{asscquad}
\E\left[\sup_{r\in[0,T]}c_r^2\right]<\infty.
\end{equation}
Let $(Q_r)_{r\in[0,T]}$ and $(X_r)_{r\in[0,T]}$ be the corresponding solutions to SDEs $\eqref{defQ}$ and $\eqref{defX}$ with $Q_0>0$, $\tau>0$ the corresponding stopping time in (\ref{deftau}), and $(Y_r, Z_r)_{r\in[0,\tau]}$  the solution to BSDE $\eqref{defY}$. Assume that there exist $\R$-valued  functions $\bar{g}(t,\bar{c},x,q)$ and $\bar{f}(t,\bar{c},x,q)$ so that for any $t\in[0,\tau)$,  $(x,q)\in\mathcal{O}$ and  $\bar{c}\ge 0$,
\begin{align}
\label{defmubarpos}
\bar{g}(t,\bar{c},x,q)&=\lim_{\theta\to 0}\E^t\left[\frac{g(X_{\tau},Q^{\theta,\bar{c}, t}_{\tau^{\theta,\bar{c}, t}})-g(X_{\tau^{\theta,\bar{c}, t}},Q^{\theta,\bar{c}, t}_{\tau^{\theta,\bar{c}, t}})}{\theta}\right],\\
\label{deffbarpos}
\bar{f}(t,\bar{c},x,q)&=\lim_{\theta\to 0}\E^t\left[\frac{\sign(\tau-\tau^{\theta,\bar{c}, t})}{\theta}\int_{\tau_{\min}^{\theta,\bar{c}, t}}^{\tau_{\max}^{\theta,\bar{c}, t}}f\left(r, \hat{c}^{\theta,\bar{c}, t}_r, X_r,\hat{Q}^{\theta,\bar{c}, t}_r\right)\; dr\right],
\end{align}
where $\E^t[\cdot]=\E[\cdot | X_t=x, \ Q_t=q]$ is the conditional expectation  at time $t$. Then, $c$ necessarily satisfies for $t\in[0,\tau)$,  $\bar{c}\ge0$,
\begin{equation}
\label{SMP}
\mathcal{H}(t,\bar{c},X_t,Q_t,Y_t)-\mathcal{H}(t,c_t,X_t,Q_t,Y_t)+\mathcal{G}(t,\bar{c},X_t,Q_t)\le 0 \quad \text{a.s.},
\end{equation}
where $\mathcal{G}(t,\bar{c},x,q)$ is defined as
\begin{equation}
\label{defGcal}
\mathcal{G}(t,\bar{c},x,q):=(\bar{c}-c_t)\E^t\left[\partial_q g(X_\tau, Q_\tau) \mathds{1}_{\Lambda(t,\bar{c})}\right]-\bar{g}(t,\bar{c},x,q)-\bar{f}(t,\bar{c},x,q)
\end{equation}
and  $\Lambda(t,\bar{c})$ as
\begin{equation*}
\Lambda(t,\bar{c}):=\left(\{Q_T=0\}\cap \{\bar{c}\ge c_t\}\right) \cup \left(\{\tau<T\}\cap \{\bar{c}< c_t\}\right).
\end{equation*}
\end{theorem}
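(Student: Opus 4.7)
The plan is to exploit the optimality of $c$ through the needle variation $c^{\theta,\bar c,t}$ of \eqref{defctheta}: since $J(c^{\theta,\bar c,t})-J(c)\le 0$, dividing by $\theta$ and letting $\theta\to 0^+$ yields a first-order variational inequality, which I aim to identify with \eqref{SMP}. I would split the difference $J(c^{\theta,\bar c,t})-J(c)$ into a terminal-payoff contribution $\E[g(X_{\tau^{\theta,\bar c,t}},Q^{\theta,\bar c,t}_{\tau^{\theta,\bar c,t}})-g(X_\tau,Q_\tau)]$ and a running-payoff contribution involving $\int_0^{\tau^{\theta,\bar c,t}} f - \int_0^\tau f$, treat each piece so that $\bar g$ and $\bar f$ arise from their defining limits \eqref{defmubarpos}--\eqref{deffbarpos}, and then use the BSDE \eqref{defY} to combine the remaining $\partial_q f$ and $\partial_q g$ integrals with a $Y_t$-term, reproducing $\mathcal H$ and $\mathcal G$.

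For the terminal payoff I would insert the intermediate $g(X_\tau,Q^{\theta,\bar c,t}_{\tau^{\theta,\bar c,t}})$: the bracket $g(X_{\tau^{\theta,\bar c,t}},\cdot)-g(X_\tau,\cdot)$ divided by $\theta$, after $\E^t$, converges to $-\bar g(t,\bar c,X_t,Q_t)$ by \eqref{defmubarpos}, and the bracket $g(X_\tau,Q^{\theta,\bar c,t}_{\tau^{\theta,\bar c,t}})-g(X_\tau,Q_\tau)$ is expanded via $\partial_q g$. A case analysis over whether $\tau<T$ or $\tau=T$ combined with $\bar c\ge c_t$ or $\bar c<c_t$ shows that $Q^{\theta,\bar c,t}_{\tau^{\theta,\bar c,t}}-Q_\tau$ vanishes on $\Lambda(t,\bar c)$ and equals $-\gamma^{\theta,\bar c,t}_{t+\theta}$ on its complement, so the limit of the second bracket is $-(\bar c-c_t)\E^t[\partial_q g(X_\tau,Q_\tau)\mathds{1}_{\Lambda(t,\bar c)^c}]$. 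For the running payoff I would split the time domain at $\tau_{\min}^{\theta,\bar c,t}$ and $\tau_{\max}^{\theta,\bar c,t}$: on $[\tau_{\min}^{\theta,\bar c,t},\tau_{\max}^{\theta,\bar c,t}]$ exactly one of $c$, $c^{\theta,\bar c,t}$ is zero, so the integrand of the actual difference equals $f(r,\hat c^{\theta,\bar c,t}_r,X_r,\hat Q^{\theta,\bar c,t}_r)$, and together with the $\sign(\tau-\tau^{\theta,\bar c,t})$ factor this contributes $-\bar f(t,\bar c,X_t,Q_t)$ in the limit by \eqref{deffbarpos}; on $[0,\tau_{\min}^{\theta,\bar c,t}]$ the integrand difference is supported on $[t,\tau_{\min}^{\theta,\bar c,t}]$ and decomposes into a needle piece on $[t,t+\theta)$ converging to $f(t,\bar c,X_t,Q_t)-f(t,c_t,X_t,Q_t)$, and an inventory-shift piece on $[t+\theta,\tau_{\min}^{\theta,\bar c,t}]$, where $Q^{\theta,\bar c,t}_r-Q_r=-\gamma^{\theta,\bar c,t}_{t+\theta}$, converging to $-(\bar c-c_t)\int_t^\tau \partial_q f(r,c_r,X_r,Q_r)\,dr$.

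To fuse these fragments into the Hamiltonian I would integrate the BSDE \eqref{defY} from $t$ to $\tau$ and take $\E^t$, obtaining $\E^t[\int_t^\tau \partial_q f(r,c_r,X_r,Q_r)\,dr]=Y_t-\E^t[\partial_q g(X_\tau,Q_\tau)]$. Substituting and using that $\bar c-c_t$ is $\mathcal F_t$-measurable, the $Y_t$ piece combines with $f(t,\bar c,\ldots)-f(t,c_t,\ldots)$ to form $\mathcal H(t,\bar c,X_t,Q_t,Y_t)-\mathcal H(t,c_t,X_t,Q_t,Y_t)$, while the two $\E^t[\partial_q g]$ pieces recombine, via $\mathds{1}_{\Lambda(t,\bar c)}=1-\mathds{1}_{\Lambda(t,\bar c)^c}$, into $(\bar c-c_t)\E^t[\partial_q g(X_\tau,Q_\tau)\mathds{1}_{\Lambda(t,\bar c)}]$, matching the first summand of $\mathcal G$ in \eqref{defGcal}. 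This produces the expected inequality $\E[\mathcal H(t,\bar c)-\mathcal H(t,c_t)+\mathcal G(t,\bar c)]\le 0$, which I would upgrade to the pointwise almost-sure form \eqref{SMP} by re-running the whole variation with $\bar c$ replaced by any bounded $\mathcal F_t$-measurable random variable: taking $\bar c=\beta\mathds{1}_A+c_t\mathds{1}_{A^c}$ for arbitrary $A\in\mathcal F_t$ and rational $\beta\ge 0$ gives $\E[\mathds{1}_A\Phi(\beta)]\le 0$ with $\Phi$ the $\mathcal F_t$-measurable left-hand side of \eqref{SMP}, whence $\Phi(\beta)\le 0$ a.s., and the extension to all $\bar c\ge 0$ follows by continuity of $\mathcal H$ and $\mathcal G$ in $\bar c$. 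The main obstacle I anticipate is precisely the case analysis that manufactures the indicator $\mathds{1}_{\Lambda(t,\bar c)}$, namely tracking when $\tau^{\theta,\bar c,t}$ overshoots, undershoots or coincides with $\tau$ and verifying that only the events in $\Lambda(t,\bar c)^c$ contribute a nonzero $(Q^{\theta,\bar c,t}_{\tau^{\theta,\bar c,t}}-Q_\tau)/\theta$; a secondary technical burden is the dominated-convergence estimates needed for every limit interchange above, which should follow from Assumption \ref{secassumptions}, \eqref{asscquad}, and standard a priori estimates for BSDEs with random terminal time.
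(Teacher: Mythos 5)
Your proposal is correct and follows essentially the same route as the paper's proof: the spike variation \eqref{defctheta}, the three-way split of the payoff difference into the $\bar{g}$-term, the $\partial_q g$-term, and the running-cost terms on $[t,\tau^{\theta,\bar{c},t}_{\min}]$ and $[\tau^{\theta,\bar{c},t}_{\min},\tau^{\theta,\bar{c},t}_{\max}]$, the case analysis on the relative position of $\tau$ and $\tau^{\theta,\bar{c},t}$ (which the paper carries out via the events $E^{\theta,\bar{c},t}_{1},E^{\theta,\bar{c},t}_{2},E^{\theta,\bar{c},t}_{3}$ and the vanishing-probability estimates of Lemma \ref{limitevents}) to produce $\mathds{1}_{\Lambda(t,\bar{c})}$, and the BSDE \eqref{defY} to assemble $\mathcal{H}$ and $\mathcal{G}$. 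The only differences are cosmetic: the paper routes the $\partial_q g$-term through the intermediaries $Q^{\theta,\bar{c},t}_{\tau^{\theta,\bar{c},t}_{\min}}-Q_{\tau^{\theta,\bar{c},t}_{\min}}$ and $\hat{Q}^{\theta,\bar{c},t}_{\tau^{\theta,\bar{c},t}}-\hat{Q}^{\theta,\bar{c},t}_{\tau}$ rather than expanding $Q^{\theta,\bar{c},t}_{\tau^{\theta,\bar{c},t}}-Q_\tau$ directly, which is algebraically equivalent, and your explicit $\mathcal{F}_t$-localization for the pointwise upgrade is a more careful rendering of a step the paper treats implicitly.
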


\begin{remark}
The definition of $\bar{g}$ in $\eqref{defmubarpos}$ is asymmetric in the arguments of functions $g$. We may define $\bar{g}$ in a symmetric way as
\begin{equation*}
\lim_{\theta\to 0}\E^t\left[\frac{g(X_{\tau},Q_\tau)-g(X_{\tau^{\theta,\bar{c}, t}},Q^{\theta,\bar{c}, t}_{\tau^{\theta,\bar{c}, t}})}{\theta}\right].
\end{equation*}
To get  an analogy of the Hamiltonian condition in Theorem \ref{SMPtheorem}, we have to define $\bar{g}$ as  in $\eqref{defmubarpos}$. This point is illustrated in $\eqref{derivationepspos}$ in the proof of Theorem \ref{SMPtheorem}.
 Note also that
stopping time $\tau$ is determined by the optimal control $c$ as in (\ref{deftau}) and  is therefore given.  The definitions of $\tau$ and $\tau^{\theta, \bar{c}, t}$ are unrelated but $\tau^{\theta, \bar{c}, t}$ converges to $\tau$ in $L^1$ and almost surely as $\theta \to 0$, see Lemma \ref{lemmaonconvergencetau}.
One interesting question raised by one of the reviewers is that if the limit $\lim_{\theta \to 0} \frac{\tau^{\theta, \bar{c},t}-\tau}{\theta}$ exists or not. The answer in general is negative. This can be seen by the following simple    example. Assume $t=0$ and $q_0=T^2/2$. Assume the optimal control is $c_t=T-t$ for $t\in [0,T]$, which gives 
$Q_r=q_0-\int_0^r c_s ds = (1/2)(T-r)^2$ for $r\in [0,T]$ and $Q_r=0$ if and only if $r=\tau:=T$.
Now consider a perturbation with $\bar{c}>T$ and $0<\theta<T^2/(2\bar c)$, which gives $Q^{\theta, \bar{c},t}_r=q_0-\bar c r>0$ for $r\in [0,\theta]$ and $Q^{\theta, \bar{c},t}_r=q_0-\int_0^r   c^{\theta, \bar{c},t}_s ds = 
-\bar{c}\theta +T\theta   -  \theta^2/2 + (T-r)^2/2$ for $r\in [\theta, T]$ and $Q^{\theta, \bar{c},t}_r=0$ if and only if
$r=\tau^{\theta, \bar{c},t}:=T-\sqrt{\theta^2-2T\theta+2\bar{c}\theta}$. We have $\tau^{\theta, \bar{c},t}\to \tau$ as $\theta\to 0$ but
\begin{equation*}
\lim_{\theta \to 0} \frac{\tau^{\theta, \bar{c},t}-\tau}{\theta}=-\lim_{\theta \to 0} \frac{\sqrt{\theta^2-2T\theta+2\bar{c}\theta}}{\theta}=-\lim_{\theta \to 0} \sqrt{1+2\frac{\bar{c}-T}{\theta}}=-\infty,
\end{equation*}
which shows the limit does not exist. 
\end{remark}

\begin{remark}
The standard SMP (c.f. Pham~\cite{PhamHuyen2009CSCa})  and Theorem \ref{SMPtheorem} cannot be recovered from each other as they solve different problems. However, their statements are similar and only differ for the additional term $\mathcal{G}$ in \eqref{defGcal}.
\end{remark}

\begin{remark}
The same result as Theorem \ref{SMPtheorem} can be obtained in the case when the admissible set is bounded by above as well, i.e. when $\pi$ is required to be in $\pi\in[0,b]$ with $0<b\le +\infty$. Although the proof does not change, the only remark we want to point out is on the admissibility of control $c^{\theta,\bar{c}, t}$. Since $\bar{c}\in[0,b]$ and $c_r\in[0,b]$ for every $r\in[0,T]$, then $-\frac{\gamma^{\theta,\bar{c}, t}_{t+\theta}}{\theta}=\frac{1}{\theta}\int_t^{(t+\theta)\wedge \tau}(c_r-\bar{c})dr\le \frac{(t+\theta)\wedge \tau-t}{\theta}b\le b$.
\end{remark}

\section{Example}
\label{sectionstochexample}
In this section we describe an example to show that the usual  SMP is not satisfied whereas Theorem \ref{SMPtheorem} is satisfied.  We consider an optimal liquidation problem with no market impact on trade and no terminal execution. In particular, let $g=0$ and $f(\pi,x)=\pi x$. Assume the admissible control can only take values in the interval $[0,c^+]$. Let $t\in[0,T]$ and $Q_t=q>0$ be fixed. The stock price $X$ satisfies the SDE:  for $r\in[t,T]$ and $X_t=x>0$,
\begin{equation*}
dX_r=X_rdW_r.
\end{equation*}
 The agent aims to maximise his final cash value that is the cumulated liquidation wealth up to the stopping time at which the agent runs out the liquidating stocks and there is no residual value for any remaining stocks at horizon time $T$. The value function to this problem is defined by
\begin{align}
\label{valfuncexamplestoch}
v(t,x,q)&=\sup_{\pi\in \mathcal{A}}\E^t\left[ \int_t^{\tau^\pi} \pi_r X_r \;dr \right],
\end{align}
where  $\tau^\pi=T\wedge \min\{r\ge t\; | \; Q^\pi_r=0\}$ is the first hitting time of $Q^\pi$ to zero or the fixed terminal time $T$, whichever comes first. We define an admissible control strategy $c$ as follows: for any $r\in[t,T]$,
\begin{equation}
\label{optcexamplestoch}
c_r= \begin{cases}\frac{q}{T-t} & \text{if } q\le c^+(T-t),\\
c^+  & \text{if } q>c^+(T-t).
\end{cases}
\end{equation}
The inventory $Q_r$ in $\eqref{defQ}$ is given by 
\begin{equation*}
Q_r=\begin{cases}\frac{q}{T-t}(T-r) & \text{if } q\le c^+(T-t),\\
q-(r-t)c^+  & \text{if } q>c^+(T-t).
\end{cases}
\end{equation*}
We also have that for any $r\in[t,T]$,  $Q_r\le c^+(T-r) \;\Leftrightarrow\; q\le c^+(T-t)$. Using the above expression for $Q_r$, it is easy to check that the first hitting time of $Q_r=0$ is
\begin{equation*}
\tau=T \quad \text{ a.s.}.
\end{equation*}
Since the stopping time $\tau$ is equal to the terminal time $T$,  it may look like the control in $\eqref{optcexamplestoch}$ is the same control we would have found in the usual setting without the stopping time. However, the optimal control in the usual case without stopping time  would have been equal to $c^+$. When $q< c^+(T-t)$, at time $r=t+\frac{q}{c^+}< T$, the inventory would have reached zero  and from that time onward the inventory would have become negative, making the control $c^+$ not feasible for our problem, see Remark \ref{remarkexamplestoch}.

Substituting  $\eqref{optcexamplestoch}$ and $\tau=T$ into $\eqref{valfuncexamplestoch}$, we can easily show 
that  the value function associated with control $\eqref{optcexamplestoch}$ is equal to
\begin{equation}
\label{valfuncexstoch}
v^c(t,x,q)=\begin{cases}qx & \text{if } q\le c^+(T-t),\\
xc^+ (T-t) & \text{if } q>c^+(T-t).
\end{cases}
\end{equation}
\begin{proposition}
Function $v^c$ in $\eqref{valfuncexstoch}$ coincides with the value function $v$ of problem $\eqref{valfuncexamplestoch}$   and the strategy $c$ in (\ref{optcexamplestoch}) is the optimal control.
\end{proposition}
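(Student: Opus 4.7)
The plan is to establish the two inequalities $v^c\le v$ and $v\le v^c$ separately. The first is essentially by inspection: I would check that the control $c$ in \eqref{optcexamplestoch} is deterministic, non-negative, and bounded by $c^+$, hence admissible, and that it produces the inventory profile given in the preamble to the proposition, under which $\tau=T$ a.s.. Substituting $\pi=c$ and $\tau^\pi=T$ into \eqref{valfuncexamplestoch} and using $\E^t[X_r]=x$ (since $X$ is a positive martingale) would give, via a short Fubini computation, exactly the expression $v^c(t,x,q)$ of \eqref{valfuncexstoch}. This yields $v(t,x,q)\ge v^c(t,x,q)$.

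The heart of the argument is the reverse inequality $v(t,x,q)\le v^c(t,x,q)=\min\bigl(qx,\,c^+(T-t)x\bigr)$ for every admissible $\pi$, which I propose to prove by two complementary upper bounds. For the ceiling $qx$, I would apply the product rule to $Q^\pi_rX_r$; since $dQ^\pi_r=-\pi_r\,dr$ has finite variation and $dX_r=X_r\,dW_r$, there is no cross bracket, yielding
\begin{equation*}
Q^\pi_{\tau^\pi}X_{\tau^\pi}=qx-\int_t^{\tau^\pi}\pi_rX_r\,dr+\int_t^{\tau^\pi}Q^\pi_rX_r\,dW_r.
\end{equation*}
Since $Q^\pi_r\le q$ and the geometric Brownian motion $X$ has moments of all orders, the integrand $Q^\pi_rX_r$ lies in $L^2([t,T]\times\Omega)$, so the stochastic integral is a true martingale and vanishes in conditional expectation; combined with $Q^\pi_{\tau^\pi}X_{\tau^\pi}\ge 0$ this would give $\E^t\!\left[\int_t^{\tau^\pi}\pi_rX_r\,dr\right]\le qx$.

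For the ceiling $c^+(T-t)x$, I would extend the integrand by zero beyond $\tau^\pi$ and apply Fubini together with the rate constraint:
\begin{equation*}
\E^t\!\left[\int_t^{\tau^\pi}\pi_rX_r\,dr\right]=\int_t^T\E^t\!\left[\pi_rX_r\,\mathds{1}_{r\le\tau^\pi}\right]dr\le c^+\int_t^T\E^t[X_r]\,dr=c^+(T-t)x.
\end{equation*}
Taking the minimum of the two ceilings then gives $v(t,x,q)\le v^c(t,x,q)$, which closes the argument.

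I expect no substantial obstacle. The only technicality worth stressing in advance is the true-martingale verification of $\int Q^\pi X\,dW$, for which boundedness of $Q^\pi$ by $q$ combined with the exponential moment bound on $X$ should suffice. The conceptual content is the observation that the two natural constraints on the admissible class, namely the inventory floor $Q^\pi\ge 0$ and the instantaneous rate cap $\pi\le c^+$, each furnish an upper bound on the objective through a different manipulation (It\^o versus Fubini), and that their minimum is tight and attained by the candidate $c$.
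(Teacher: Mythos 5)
Your proposal is correct, and it reaches the conclusion by a genuinely different route from the paper. The paper proves $v\le v^c$ by a verification (DPP) argument: it introduces $w(t,q)=v^c(t,x,q)/x$, checks that $w$ solves the HJB equation $\partial_t w+\sup_{\pi\in[0,c^+]}[\pi-\pi\partial_q w]=0$ with the boundary condition $w(t,0)=0$ and terminal condition $w(T,q)=0$, applies the product rule to $X_r\,w(r,Q^*_r)$ along an arbitrary admissible control, uses the HJB inequality to get $dw(r,Q^*_r)\le -c^*_r\,dr$, and concludes via $v^c(\tau^*,X_{\tau^*},Q^*_{\tau^*})=0$ and optional sampling. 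You instead observe that $v^c(t,x,q)=\min\bigl(qx,\,c^+(T-t)x\bigr)$ and establish the two ceilings separately: the inventory floor $Q^\pi_{\tau^\pi}\ge 0$ gives $\E^t[\int_t^{\tau^\pi}\pi_rX_r\,dr]\le qx$ via the product rule on $Q^\pi_rX_r$ (your first bound is essentially the paper's It\^o step specialized to $w(t,q)=q$), while the rate cap $\pi\le c^+$ gives the ceiling $c^+(T-t)x$ by Fubini and the martingale property of $X$, with no PDE needed. Your route is more elementary and avoids guessing and verifying the HJB equation and its boundary/terminal conditions, at the price of exploiting the very special structure of this example (linear payoff in $\pi$, $X$ unaffected by the control, $q$ entering only through the stopping time); the paper's verification argument is the systematic template that would survive a less degenerate running cost. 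The martingale technicality you flag (square-integrability of $Q^\pi X$ from $Q^\pi\le q$ and the moment bounds on geometric Brownian motion, plus optional sampling at the bounded stopping time $\tau^\pi\le T$) is handled the same way in both arguments and is indeed the only point requiring care.
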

\begin{proof}
By definition of $v^c(t,x,q)$, we have that for any $t\in[0,T]$, $x>0$ and $q>0$
\begin{equation}
\label{eqqless1exstoch}
v^c(t,x,q)=\E^t\left[\int_t^\tau c_rX_r dr\right] \le \sup_{\pi\in\mathcal{A}}\E^t\left[\int_t^{\tau^\pi} \pi_rX_r dr\right] = v(t,x,q).
\end{equation}
We now show that $v^c\ge v$. Define
\begin{equation}
\label{secondvalfuncexstoch}
w(t,q)=\begin{cases}q & \text{if } q\le c^+(T-t),\\
c^+ (T-t) & \text{if } q>c^+(T-t).
\end{cases}
\end{equation}
 Simple calculus shows that
\begin{align*}
\partial_q w(t,q)=\begin{cases}1 & \text{if } q\le c^+(T-t),\\
0 & \text{if } q>c^+(T-t),
\end{cases}\qquad \partial_t w(t,q)=\begin{cases}0 & \text{if } q\le c^+(T-t),\\
-c^+ & \text{if } q>c^+(T-t).
\end{cases}
\end{align*}
It can be easily verified that $w$ satisfies the following HJB equation, for $t\in[0,T]$ and $q>0$
\begin{equation}
\label{HJBeqexamplestoch}
\partial_t w+\sup_{\pi\in[0,c^+]} \left[\pi-\pi\partial_qw\right]=0.
\end{equation}
$w$ satisfies the boundary condition $w(t,0)=0$ for any $t\in[0,T]$ and the terminal condition $w(T,q)=0$ for any $q>0$.

Denote by $c^*_r$ the optimal control for the value function $v(t,x,q)$ in $\eqref{valfuncexamplestoch}$ and $Q^*_r$ and $\tau^*$  the corresponding inventory and stopping time respectively. Using $\eqref{defQ}$, for any $r\in[t,T]$, we have
\begin{equation}
\label{eqqpropex2stoch}
dw(r,Q^*_r)=\partial_t w(r,Q^*_r)dr+\partial_qw(r,Q^*_r)dQ^*_r=\left[\partial_t w(r,Q^*_r)-\partial_qw(r,Q^*_r)c^*_r\right]dr\le -c^*_rdr.
\end{equation}
We have used (\ref{HJBeqexamplestoch}) in the last inequality. 
Finally, from  $X_t=x$, $Q_t^*=q$, $v^c(t,x,q)=xw(t,q)$, using the stochastic integration by parts and $\eqref{eqqpropex2stoch}$, we have
\begin{equation}
\label{eqq7571stoch}
\begin{split}
v^c(\tau^*,X_{\tau^*},Q^*_{\tau^*})&=v^c(t,X_t,Q^*_t)+\int_t^{\tau^*} X_r dw(r,Q^*_r) +\int_t^{\tau^*} w(r,Q^*_r) dX_r \\
&\le v^c(t,x,q)-\int_t^{\tau^*} X_r c^*_r dr+\int_t^{\tau^*} w(r,Q^*_r) X_r dW_r.
\end{split}
\end{equation}
However, using boundary and terminal condition $v^c(t,x,0)=0$ for any $t\in[0,T]$ and $x>0$ and $v^c(T,x,q)=0$ for any $x>0$ and $q>0$, we conclude that $v^c(\tau^*,X_{\tau^*},Q^*_{\tau^*})=0$, as either $\tau^*=T$ or $Q^*_{\tau^*}=0$. Therefore, taking conditional expectations on both sides of $\eqref{eqq7571stoch}$ and using the optional sampling theorem, we have that the expected value of the random variable $\int_t^{\tau^*} w(r,Q^*_r) X_r dW_r$ is equal to $0$ and we get
\begin{equation*}
v^c(t,x,q)\ge \E^t\left[\int_t^{\tau^*} c^*_rX_r\; dr\right] = v(t,x,q).
\end{equation*}
Combining $\eqref{eqqless1exstoch}$ and the previous expression, we conclude the proof.
\end{proof}

\begin{remark}
\label{remarkexamplestoch}
In the standard version of the SMP (cf. Pham~\cite{PhamHuyen2009CSCa}) there should be 2 adjoint processes in the BSDE, referring respectively to processes $X$ and $Q$. However, since the process $X$ does not depend on control $\pi$, the terms regarding the adjoint process referring to $X$ can be removed from the Hamiltonian and, noting that $g=0$ and $f(\pi,x)=\pi x$, those referring to $Q$ are identically equal to $0$. The necessary condition of the standard SMP is equivalent to
\begin{equation}
\label{eqqexstochSMP}
 f(\pi,X_t)\le f(c_t,X_t), \quad\forall t\in[0,T], \ \forall \pi\in[0,c^+].
\end{equation}
However, from $\eqref{valfuncexamplestoch}$ we get that the maximal point of $f(\cdot,x)$ is $\bar{\pi}=c^+$.  We have shown that if $q< c^+(T-t)$, then the optimal strategy is $c_r=\frac{q}{T-t}$ for $t\le r\le T$ as in $\eqref{optcexamplestoch}$, which is less than $\bar{\pi}=c^+$, a contradiction to $\eqref{eqqexstochSMP}$ and  the standard SMP. 
\end{remark}

We next verify  that optimal control in $\eqref{optcexamplestoch}$ satisfies Theorem \ref{SMPtheorem}. We need to show that $\eqref{SMP}$ holds true. Firstly, we observe that the model setup satisfies Assumption \ref{secassumptions}. Using the fact that $\mu= 0$ and  $g=0$, we show that $\eqref{SMP}$ holds true by proving that for any $\bar{c}\in[0,c^+]$, $t\in[0,T)$,
\begin{equation}
\label{toproveexstoch}
f(\bar{c},X_t)-f(c_{t},X_t)-\bar{f}(t,\bar{c},X_t,Q_t)\le 0.
\end{equation}

We first find the expression for $\bar{f}(t,\bar{c},x,q)$ in the following proposition. 
\begin{proposition}
Let $t\in[0,T)$ be fixed and  $(c_r)_{r\in[t,T]}$ be the optimal control in $\eqref{optcexamplestoch}$. Then, for any $\bar{c}\in[0,c^+]$, $x>0$ and $q>0$
\begin{equation*}
\bar{f}(t,\bar{c},x,q)=\begin{cases}(\bar{c}-c_t) x \mathds{1}_{\bar{c}\ge c_t} & \text{if } q\le c^+(T-t),\\
0 & \text{if } q>c^+(T-t).
\end{cases}
\end{equation*}
\end{proposition}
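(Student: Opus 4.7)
My plan is to split the computation of $\bar{f}$ according to the two regimes in \eqref{optcexamplestoch}, namely $q\le c^+(T-t)$ and $q>c^+(T-t)$, and in each regime to compute the perturbed inventory and the perturbed stopping time $\tau^{\theta,\bar c, t}$ explicitly, exploiting the fact that on $[t,T]$ the optimal control $c_r$ is \emph{constant} (equal to $c_t=q/(T-t)$ or $c_t=c^+$) and that $\tau=T$ a.s. Because $g=0$, $f(r,\pi,x,q)=\pi x$ and $\mu\equiv 0$, each occurrence of $\hat f$ inside the integral in \eqref{deffbarpos} reduces simply to $\hat c_r X_r$, so the whole object is an integral of $X$ weighted by a constant, over an interval of length of order $\theta$.

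Using \eqref{defQ} and the constancy of $c_r$ on $[t,T]$, for $r\in[t+\theta,\tau^{\theta,\bar c, t}]$ one gets the identity
\[
Q^{\theta,\bar c, t}_r = Q_r - \gamma^{\theta,\bar c, t}_{t+\theta} = Q_r - (\bar c - c_t)\theta.
\]
I would then dispose of the easy subcases where $\tau^{\theta,\bar c, t}=T=\tau$, making the integrand in \eqref{deffbarpos} vanish. These are: (a) $q>c^+(T-t)$, since $\bar c\in[0,c^+]$ forces $\bar c\le c_t=c^+$, hence $\gamma^{\theta,\bar c, t}_{t+\theta}\le 0$ and $Q^{\theta,\bar c, t}_T\ge Q_T>0$; and (b) $q\le c^+(T-t)$ with $\bar c<c_t$, where again $\gamma^{\theta,\bar c, t}_{t+\theta}<0$ and $Q^{\theta,\bar c, t}$ never reaches zero before $T$. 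In both subcases $\tau^{\theta,\bar c, t}_{\min}=\tau^{\theta,\bar c, t}_{\max}=T$, the integral is empty, and $\bar f=0$.

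The only nontrivial case is $q\le c^+(T-t)$ with $\bar c\ge c_t$. There $\gamma^{\theta,\bar c, t}_{t+\theta}=(\bar c-c_t)\theta\ge 0$, so $Q^{\theta,\bar c, t}_r=0$ iff $Q_r=(\bar c-c_t)\theta$, i.e.\ $\tau^{\theta,\bar c, t}=T-(\bar c-c_t)\theta/c_t<T=\tau$, giving $\sign(\tau-\tau^{\theta,\bar c, t})=+1$. On $[\tau^{\theta,\bar c, t},T]$ the convention makes $c^{\theta,\bar c, t}_r=0$ while $c_r=c_t$ and $Q_r>0=Q^{\theta,\bar c, t}_r$, hence $\hat c_r=c_t$ and $\hat Q_r=Q_r$, so the integrand is $c_t X_r$. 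This reduces \eqref{deffbarpos} to
\[
\bar f(t,\bar c,x,q)=\lim_{\theta\to 0}\E^t\!\left[\frac{c_t}{\theta}\int_{\tau^{\theta,\bar c, t}}^{T}X_r\,dr\right].
\]
Since $T-\tau^{\theta,\bar c, t}=(\bar c-c_t)\theta/c_t$, path continuity of $X$ gives the pointwise limit $(\bar c-c_t)X_T$. The main (mild) obstacle is swapping limit and expectation: I would use the uniform bound $\frac{c_t}{\theta}\int_{\tau^{\theta,\bar c, t}}^{T}X_r\,dr \le (\bar c-c_t)\sup_{r\in[t,T]}X_r$, which is independent of $\theta$ and integrable since the geometric Brownian motion $X$ has integrable running supremum (Doob's inequality). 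Dominated convergence then gives $\bar f(t,\bar c,x,q)=(\bar c-c_t)\E^t[X_T]=(\bar c-c_t)x$ by the martingale property of $X$. Collecting the cases yields the stated formula.
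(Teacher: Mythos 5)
Your proposal is correct and follows essentially the same route as the paper: the same case split on $q\lessgtr c^+(T-t)$ and $\bar c$ versus $c_t$, the same explicit computation of $Q^{\theta,\bar c,t}$ and of $\tau^{\theta,\bar c,t}=T-\theta(\bar c/c_t-1)$ in the nontrivial case, and the same reduction of the integrand to $c_t X_r$ on $[\tau^{\theta,\bar c,t},T]$. The only (inessential) difference is at the last step, where you justify the limit via pointwise convergence, a Doob-type domination and dominated convergence, whereas the paper evaluates the expectation exactly for each $\theta$ by Fubini and the martingale property $\E^t[X_r]=x$; both are valid.
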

\begin{proof}
We consider any $\theta\in(0,T-t)$, so that $\tau=T> t+\theta$.  Using the fact that $c_r$ in $\eqref{optcexamplestoch}$ is constant in time, we have for any $r\in[t+\theta, \tau]$
\begin{equation}
\label{eq1stochex}
Q^{\theta,\bar{c}, t}_r=q-\bar{c}\theta-\int_{t+\theta}^r c_s ds=q-c_t(r-t)+\theta(c_t-\bar{c}).
\end{equation}
If $q>c^+(T-t)$, then $\eqref{optcexamplestoch}$ implies that $c_t=c^+$ and from $\eqref{eq1stochex}$, 
\begin{equation*}
Q^{\theta,\bar{c}, t}_T> \theta(c^+-\bar{c})\ge 0.
\end{equation*}
Here we have used the fact that $\bar{c}$ is an admissible control and so $\bar{c}\in[0,c^+]$. The above expression implies that $\tau^{\theta,\bar{c}, t}=T$ a.s.. On the other hand, if $q\le c^+(T-t)$, then $c_t=\frac{q}{T-t}$ and from $\eqref{eq1stochex}$, $Q^{\theta,\bar{c}, t}_T=-\theta(\bar{c}-c_t)$. Hence, if $\bar{c}\le c_t, Q^{\theta,\bar{c}, t}_T\ge 0$ and so $\tau^{\theta,\bar{c}, t}=T$ a.s.. If $\bar{c}> c_t, Q^{\theta,\bar{c}, t}_T< 0$ and so $\tau^{\theta,\bar{c}, t}<T$ a.s., so by setting $\eqref{eq1stochex}$ equal to $0$ we get that $\tau^{\theta,\bar{c}, t}=T-\theta\left(\frac{\bar{c}}{c_t}-1\right)$ a.s., where $\theta\left(\frac{\bar{c}}{c_t}-1\right)>0$, since $\bar{c}>c_t$.

In conclusion, if $q>c^+(T-t)$, then $\tau^{\theta,\bar{c}, t}_{\min}=\tau^{\theta,\bar{c}, t}_{\max}=T$ and  we have that $\bar{f}=0$ from definition $\eqref{deffbarpos}$. If $q\le c^+(T-t)$, then we consider two sub-cases. If $\bar{c}\le c_t$, then $\tau^{\theta,\bar{c}, t}_{\min}=\tau^{\theta,\bar{c}, t}_{\max}=T$, making $\bar{f}=0$ again. If $\bar{c}>c_t$, then $\tau^{\theta,\bar{c}, t}_{\min}=T-\theta\left(\frac{\bar{c}}{c_t}-1\right)$ and $\tau^{\theta,\bar{c}, t}_{\max}=T$ and so
\begin{align*}
\bar{f}(t,\bar{c},x,q)&=\lim_{\theta\to 0}\E^t\left[\frac{1}{\theta}\int_{T-\theta\left(\frac{\bar{c}}{c_t}-1\right)}^{T}c_r X_r\; dr\right]=\lim_{\theta\to 0}\frac{\theta\left(\frac{\bar{c}}{c_t}-1\right)}{\theta}c_tx=(\bar{c}-c_t) x .
\end{align*}
This concludes the proof of the proposition.
\end{proof}
To prove $\eqref{toproveexstoch}$,  we split the proof of $\eqref{toproveexstoch}$ in two parts.  If $q\le c^+(T-t)$, then the left side of $\eqref{toproveexstoch}$ is equal to $\bar{c} X_t-c_t X_t-(\bar{c}-c_t)X_t\mathds{1}_{\bar{c}\ge c_t}=X_t\cdot\min\left(\bar{c}-c_t, \ 0\right)\leq 0$. If $q> c^+(T-t)$, then the left  side of $\eqref{toproveexstoch}$ is equal to $\bar{c} X_t-c_t X_t=(\bar{c}-c^+)X_t\leq 0$ as  $\bar{c}\le c^+$. Hence, $\eqref{toproveexstoch}$ is satisfied for any $c^+\ge \bar{c}\ge 0$, and Theorem \ref{SMPtheorem} holds.


\begin{remark}
The main purpose of this example is to show that the standard SMP cannot be applied when the terminal time is an indirectly controlled stopping time but our necessary SMP can accommodate that. It is in general difficult to find the optimal solution using Theorem \ref{SMPtheorem} that has no particular advantage to the DPP  for the example, but this is the first step in  addressing  the indirectly controlled random terminal time problem with the SMP which has the potential for solving  the non-Markovian model,  see Section 5 for possible further research. 
\end{remark}

\section{Proof of Theorem \ref{SMPtheorem}}
\label{sectionproofs}
In this section we consider all assumptions of Theorem \ref{SMPtheorem} are satisfied. $\eqref{asscquad}$ implies that $\E\left[\sup_{r\in[0,T]}c_r\right]<\infty$. As  mentioned in the model setup, for any fixed  time $t\in[0,\tau)$, we have  $Q_t=q>0$ and so $\tau>t$ a.s.. We consider a partition of the whole event space $\{\tau>t\}$, which helps us in stating and proving some preliminary results that are needed in the proof of Theorem \ref{SMPtheorem}. As general hints for better understanding, we remind that $\tau$ is defined so that $Q_\tau=0$ if $\tau<T$, $Q_\tau\ge 0$ if $\tau=T$, and $Q_r>0$ if $r\in[t,\tau)$. Similarly, $Q^{\theta,\bar{c}, t}_{\tau^{\theta,\bar{c}, t}}=0$  if $\tau^{\theta,\bar{c}, t}<T$,  $Q^{\theta,\bar{c}, t}_{\tau^{\theta,\bar{c}, t}}\ge 0$ if $\tau^{\theta,\bar{c}, t}=T$, and $Q^{\theta,\bar{c}, t}_r>0$ if $r\in[t,\tau^{\theta,\bar{c}, t})$. We first observe, using $\eqref{defctheta}$, that if $\theta\in\left(0, (T-t)\wedge\frac{q}{\bar{c}}\right)$, then for any $r\in[t,(t+\theta)\wedge \tau]$,
\begin{equation*}
Q^{\theta,\bar{c}, t}_r=q-\int_{t}^r c^{\theta,\bar{c}, t}_s ds=q-\bar{c}\left(r-t\right)\ge q-\bar{c}\theta>0.
\end{equation*}
Therefore, if $\theta\in\left(0, (T-t)\wedge\frac{q}{\bar{c}}\right)$, then 
\begin{equation}
\label{eqtauthetabig}
\tau^{\theta,\bar{c}, t} >(t+\theta)\wedge \tau
\end{equation}
Let $t\in[0,\tau)$, $0< \theta<(T-t)\wedge\frac{q}{\bar{c}} $, $\bar{c}\ge 0$ be fixed, we define the following partitions of $\{\tau>t\}$:

\begin{equation*}
\begin{split}
E^{\theta,\bar{c}, t}_{1}&:=\left\{ t<t+\theta<\tau^{\theta,\bar{c}, t}<\tau \right\},\\
E^{\theta,\bar{c}, t}_{2}&:=\left\{ t<\tau<\tau^{\theta,\bar{c}, t} \right\},\\
E^{\theta,\bar{c}, t}_{3}&:=\left\{ t< t+\theta<\tau=\tau^{\theta,\bar{c}, t} \right\}.
\end{split}
\end{equation*}
We now present the properties of the different cases $E^{\theta,\bar{c}, t}_{i}$, for any $i\in\{1,2,3\}$. In particular, for each of the events we show a scheme for the different values of quantities $c^{\theta,\bar{c}, t}$ and $Q^{\theta,\bar{c}, t}$ in each of the time spans. These schemes help in understanding some steps in the proof of lemmas below.
\begin{enumerate}[label=\textbf{\arabic*})]

\item \textbf{On the event} $E^{\theta,\bar{c}, t}_{1}$:
\begin{table}[H]
\begin{center}
\begin{tabular}{c|c|c|c|c|c|c|c|c}
\multicolumn{1}{C{2cm}}{}&\multicolumn{2}{C{3cm}}{\kern-1em $t$}&\multicolumn{2}{C{3cm}}{\kern-1em$t+\theta$}&\multicolumn{2}{C{3cm}}{$\tau^{\theta,\bar{c}, t}$}&\multicolumn{2}{C{3cm}}{\kern-0.5em $\tau$}\\[-0.5ex]
\multicolumn{2}{R{3cm}|}{$ r\in $}&\multicolumn{2}{C{3cm}|}{}&\multicolumn{2}{C{3cm}|}{ }&\multicolumn{2}{C{3cm}|}{ }&\kern-5em \Arrow{1.5cm}\\ [-1.5ex]\cline{3-8}
\multicolumn{9}{C{15cm}}{ }\\
\multicolumn{2}{C{3.5cm}|}{$c_r^{\theta,\bar{c}, t}=$}&\multicolumn{2}{C{3cm}|}{$\bar{c}$}&\multicolumn{2}{C{3cm}|}{$c_r$}&\multicolumn{2}{C{3cm}|}{$0$}&\multicolumn{1}{C{1.5cm}}{}\\  
\multicolumn{2}{C{3.5cm}|}{$Q_r^{\theta,\bar{c}, t}=$}&\multicolumn{2}{C{3cm}|}{$Q_r-\gamma^{\theta,\bar{c}, t}_r$}&\multicolumn{2}{C{3cm}|}{$Q_r-\gamma^{\theta,\bar{c}, t}_{t+\theta}$}&\multicolumn{2}{C{3cm}|}{$0$}&\multicolumn{1}{C{1.5cm}}{}\\  
\multicolumn{2}{C{3.5cm}|}{$c_r^{\theta,\bar{c}, t}-c_r=$}&\multicolumn{2}{C{3cm}|}{$\bar{c}-c_r$}&\multicolumn{2}{C{3cm}|}{$0$}&\multicolumn{2}{C{3cm}|}{$-c_r$}&\multicolumn{1}{C{1.5cm}}{}\\
\multicolumn{2}{C{3.5cm}|}{$Q_r^{\theta,\bar{c}, t}-Q_r=$}&\multicolumn{2}{C{3cm}|}{$-\gamma^{\theta,\bar{c}, t}_r$}&\multicolumn{2}{C{3cm}|}{$-\gamma^{\theta,\bar{c}, t}_{t+\theta}$}&\multicolumn{2}{C{3cm}|}{$-Q_r$}&\multicolumn{1}{C{1.5cm}}{}\\  
\end{tabular}
\end{center}
\end{table}
From previous scheme we conclude that on the event $E^{\theta,\bar{c}, t}_{1}$
\begin{equation}
\label{QtauE1}
0=Q_{\tau^{\theta,\bar{c}, t}}^{\theta,\bar{c}, t}=Q_{\tau^{\theta,\bar{c}, t}}-\gamma^{\theta,\bar{c}, t}_{t+\theta} \ \Rightarrow \ Q_{\tau^{\theta,\bar{c}, t}}=\gamma^{\theta,\bar{c}, t}_{t+\theta}, \text{ which also implies that } \gamma^{\theta,\bar{c}, t}_{t+\theta}>0,
\end{equation}
since by definition of $\tau$, for any  $r\in[t,\tau)$, $Q_r>0$.
\begin{equation}
\label{absQminQE1}
\left|Q_r^{\theta,\bar{c}, t}-Q_r\right|\le \max\left(\sup_{r\in[t,t+\theta]}\left|\gamma^{\theta,\bar{c}, t}_r\right|,\ |Q_{\tau^{\theta,\bar{c}, t}}|\right)=\sup_{r\in[t,t+\theta]}\left|\gamma^{\theta,\bar{c}, t}_r\right|, \qquad \forall r\in[t,T],
\end{equation}
\begin{equation}
\label{QminQE1}
Q_{\tau^{\theta,\bar{c}, t}}-Q_{\tau}\le Q_{\tau^{\theta,\bar{c}, t}}= \gamma^{\theta,\bar{c}, t}_{t+\theta}.
\end{equation}

\item \textbf{On the event} $E^{\theta,\bar{c}, t}_{2}$: If $\tau> t+\theta$
\begin{table}[H]
\begin{center}
\begin{tabular}{c|c|c|c|c|c|c|c|c|c|c}
\multicolumn{1}{C{2cm}}{}&\multicolumn{2}{C{2cm}}{\kern-1em $t$}&\multicolumn{2}{C{2cm}}{\kern-1em$t+\theta$}&\multicolumn{2}{C{2cm}}{\kern-1em$\tau$}&\multicolumn{2}{C{3cm}}{\kern+1em $\tau^{\theta,\bar{c}, t}$}&\multicolumn{2}{C{2cm}}{\kern-0.5em $T$}\\[-0.5ex]
\multicolumn{2}{R{2cm}|}{$ r\in $}&\multicolumn{2}{C{2cm}|}{}&\multicolumn{2}{C{2cm}|}{ }&\multicolumn{2}{C{3cm}|}{ }&\multicolumn{2}{C{2cm}|}{}&\kern-2em \Arrow{1.5cm}\\ [-1.5ex]\cline{3-10}
\multicolumn{11}{C{16cm}}{ }\\
\multicolumn{2}{C{3cm}|}{$c_r^{\theta,\bar{c}, t}=$}&\multicolumn{2}{C{2cm}|}{$\bar{c}$}&\multicolumn{2}{C{2cm}|}{$c_r$}&\multicolumn{2}{C{3cm}|}{$-\frac{\gamma^{\theta,\bar{c}, t}_{t+\theta}}{\theta}$}&\multicolumn{2}{C{2cm}|}{$0$}&\multicolumn{1}{C{1cm}}{}\\  
\multicolumn{2}{C{3cm}|}{$Q_r^{\theta,\bar{c}, t}=$}&\multicolumn{2}{C{2cm}|}{$Q_r-\gamma^{\theta,\bar{c}, t}_{r}$}&\multicolumn{2}{C{2cm}|}{$Q_r-\gamma^{\theta,\bar{c}, t}_{t+\theta}$}&\multicolumn{2}{C{3cm}|}{$-\gamma^{\theta,\bar{c}, t}_{t+\theta}\left(1-\frac{r-\tau}{\theta}\right)$}&\multicolumn{2}{C{2cm}|}{$0$}&\multicolumn{1}{C{1cm}}{}\\  
\multicolumn{2}{C{3cm}|}{$c_r^{\theta,\bar{c}, t}-c_r=$}&\multicolumn{2}{C{2cm}|}{$\bar{c}-c_r$}&\multicolumn{2}{C{2cm}|}{$0$}&\multicolumn{2}{C{3cm}|}{$-\frac{\gamma^{\theta,\bar{c}, t}_{t+\theta}}{\theta}$}&\multicolumn{2}{C{2cm}|}{$0$}&\multicolumn{1}{C{1cm}}{}\\  
\multicolumn{2}{C{3cm}|}{$Q_r^{\theta,\bar{c}, t}-Q_r=$}&\multicolumn{2}{C{2cm}|}{$-\gamma^{\theta,\bar{c}, t}_{r}$}&\multicolumn{2}{C{2cm}|}{$-\gamma^{\theta,\bar{c}, t}_{t+\theta}$}&\multicolumn{2}{C{3cm}|}{$-\gamma^{\theta,\bar{c}, t}_{t+\theta}\left(1-\frac{r-\tau}{\theta}\right)$}&\multicolumn{2}{C{2cm}|}{$0$}&\multicolumn{1}{C{1cm}}{}\\  
\end{tabular}
\end{center}
\end{table}
If $\tau\le t+\theta$, from $\eqref{defgammatheta}$, $\gamma^{\theta,\bar{c}, t}_\tau=\gamma^{\theta,\bar{c}, t}_{t+\theta}$
\begin{table}[H]
\begin{center}
\begin{tabular}{c|c|c|c|c|c|c|c|c}
\multicolumn{1}{C{2cm}}{}&\multicolumn{2}{C{3cm}}{\kern-1em $t$}&\multicolumn{2}{C{3cm}}{\kern-1em$\tau$}&\multicolumn{2}{C{3cm}}{$\tau^{\theta,\bar{c}, t}$}&\multicolumn{2}{C{3cm}}{\kern-0.5em $T$}\\[-0.5ex]
\multicolumn{2}{R{3cm}|}{$ r\in $}&\multicolumn{2}{C{3cm}|}{}&\multicolumn{2}{C{3cm}|}{ }&\multicolumn{2}{C{3cm}|}{ }&\kern-5em \Arrow{1.5cm}\\ [-1.5ex]\cline{3-8}
\multicolumn{9}{C{15cm}}{ }\\
\multicolumn{2}{C{3.5cm}|}{$c_r^{\theta,\bar{c}, t}=$}&\multicolumn{2}{C{3cm}|}{$\bar{c}$}&\multicolumn{2}{C{3cm}|}{$-\frac{\gamma^{\theta,\bar{c}, t}_{t+\theta}}{\theta}$}&\multicolumn{2}{C{3cm}|}{$0$}&\multicolumn{1}{C{1.5cm}}{}\\  
\multicolumn{2}{C{3.5cm}|}{$Q_r^{\theta,\bar{c}, t}=$}&\multicolumn{2}{C{3cm}|}{$Q_r-\gamma^{\theta,\bar{c}, t}_r$}&\multicolumn{2}{C{3cm}|}{$-\gamma^{\theta,\bar{c}, t}_{t+\theta}\left(1-\frac{r-\tau}{\theta}\right)$}&\multicolumn{2}{C{3cm}|}{$0$}&\multicolumn{1}{C{1.5cm}}{}\\  
\multicolumn{2}{C{3.5cm}|}{$c_r^{\theta,\bar{c}, t}-c_r=$}&\multicolumn{2}{C{3cm}|}{$\bar{c}-c_r$}&\multicolumn{2}{C{3cm}|}{$-\frac{\gamma^{\theta,\bar{c}, t}_{t+\theta}}{\theta}$}&\multicolumn{2}{C{3cm}|}{$0$}&\multicolumn{1}{C{1.5cm}}{}\\
\multicolumn{2}{C{3.5cm}|}{$Q_r^{\theta,\bar{c}, t}-Q_r=$}&\multicolumn{2}{C{3cm}|}{$-\gamma^{\theta,\bar{c}, t}_r$}&\multicolumn{2}{C{3cm}|}{$-\gamma^{\theta,\bar{c}, t}_{t+\theta}\left(1-\frac{r-\tau}{\theta}\right)$}&\multicolumn{2}{C{3cm}|}{$0$}&\multicolumn{1}{C{1.5cm}}{}\\  
\end{tabular}
\end{center}
\end{table}
From previous scheme we conclude that on the event $E^{\theta,\bar{c}, t}_{2}$,
\begin{equation}
\label{gammanegE2}
Q^{\theta,\bar{c}, t}_\tau=Q_\tau-\gamma^{\theta,\bar{c}, t}_{t+\theta}=-\gamma^{\theta,\bar{c}, t}_{t+\theta}, \text{ which implies } \gamma^{\theta,\bar{c}, t}_{t+\theta}<0,
\end{equation}
since by definition of $\tau^{\theta,\bar{c}, t}$, for any $r\in[t,\tau^{\theta,\bar{c}, t})$, $Q^{\theta,\bar{c}, t}_r>0$. Moreover,
\begin{equation}
\label{QtauE2}
\begin{split}
\text{if } \tau^{\theta,\bar{c}, t}<T, \ 0=Q_{\tau^{\theta,\bar{c}, t}}^{\theta,\bar{c}, t}=-\gamma^{\theta,\bar{c}, t}_{t+\theta}\left(1-\frac{\tau^{\theta,\bar{c}, t}-\tau}{\theta}\right) \ \Rightarrow \ \tau^{\theta,\bar{c}, t}=\tau+\theta,\\
\text{if } \tau^{\theta,\bar{c}, t}=T, \ 0\le Q_{\tau^{\theta,\bar{c}, t}}^{\theta,\bar{c}, t}=-\gamma^{\theta,\bar{c}, t}_{t+\theta}\left(1-\frac{\tau^{\theta,\bar{c}, t}-\tau}{\theta}\right) \ \Rightarrow \ \tau^{\theta,\bar{c}, t}\le \tau+\theta,
\end{split}
\end{equation}
\begin{equation}
\label{absQminQE2}
\left|Q_r^{\theta,\bar{c}, t}-Q_r\right|\le \sup_{r\in[t,t+\theta]}\left|\gamma^{\theta,\bar{c}, t}_r\right|, \qquad \forall r\in[t,T],
\end{equation}
\begin{equation}
\label{QminQE2}
\text{if } \tau^{\theta,\bar{c}, t}<T, \ Q_{\tau}^{\theta,\bar{c}, t}-Q_{\tau^{\theta,\bar{c}, t}}^{\theta,\bar{c}, t}=-\gamma^{\theta,\bar{c}, t}_{t+\theta}, \quad \text{if } \tau^{\theta,\bar{c}, t}=T, \ Q_{\tau}^{\theta,\bar{c}, t}-Q_{\tau^{\theta,\bar{c}, t}}^{\theta,\bar{c}, t}\le -\gamma^{\theta,\bar{c}, t}_{t+\theta}.
\end{equation}

\item \textbf{On the event} $E^{\theta,\bar{c}, t}_{3}$:
\begin{table}[H]
\begin{center}
\begin{tabular}{c|c|c|c|c|c|c|c|c}
\multicolumn{1}{C{2cm}}{}&\multicolumn{2}{C{3cm}}{\kern-1em $t$}&\multicolumn{2}{C{3cm}}{\kern-1em$t+\theta$}&\multicolumn{2}{C{3cm}}{$\tau^{\theta,\bar{c}, t}=\tau$}&\multicolumn{2}{C{3cm}}{\kern-0.5em $T$}\\[-0.5ex]
\multicolumn{2}{R{3cm}|}{$ r\in $}&\multicolumn{2}{C{3cm}|}{}&\multicolumn{2}{C{3cm}|}{ }&\multicolumn{2}{C{3cm}|}{ }&\kern-5em \Arrow{1.5cm}\\ [-1.5ex]\cline{3-8}
\multicolumn{9}{C{15cm}}{ }\\
\multicolumn{2}{C{3.5cm}|}{$c_r^{\theta,\bar{c}, t}=$}&\multicolumn{2}{C{3cm}|}{$\bar{c}$}&\multicolumn{2}{C{3cm}|}{$c_r$}&\multicolumn{2}{C{3cm}|}{$0$}&\multicolumn{1}{C{1.5cm}}{}\\  
\multicolumn{2}{C{3.5cm}|}{$Q_r^{\theta,\bar{c}, t}=$}&\multicolumn{2}{C{3cm}|}{$Q_r-\gamma^{\theta,\bar{c}, t}_r$}&\multicolumn{2}{C{3cm}|}{$Q_r-\gamma^{\theta,\bar{c}, t}_{t+\theta}$}&\multicolumn{2}{C{3cm}|}{$0$}&\multicolumn{1}{C{1.5cm}}{}\\  
\multicolumn{2}{C{3.5cm}|}{$c_r^{\theta,\bar{c}, t}-c_r=$}&\multicolumn{2}{C{3cm}|}{$\bar{c}-c_r$}&\multicolumn{2}{C{3cm}|}{$0$}&\multicolumn{2}{C{3cm}|}{$0$}&\multicolumn{1}{C{1.5cm}}{}\\
\multicolumn{2}{C{3.5cm}|}{$Q_r^{\theta,\bar{c}, t}-Q_r=$}&\multicolumn{2}{C{3cm}|}{$-\gamma^{\theta,\bar{c}, t}_r$}&\multicolumn{2}{C{3cm}|}{$-\gamma^{\theta,\bar{c}, t}_{t+\theta}$}&\multicolumn{2}{C{3cm}|}{$0$}&\multicolumn{1}{C{1.5cm}}{}\\  
\end{tabular}
\end{center}
\end{table}
From previous scheme we conclude that on the event $E^{\theta,\bar{c}, t}_{3}$
\begin{equation}
\label{QtauE3}
\begin{split}
&\text{if } Q_T=0, \ 0\le Q_{\tau^{\theta,\bar{c}, t}}^{\theta,\bar{c}, t}=-\gamma^{\theta,\bar{c}, t}_{t+\theta},\\
&\text{if } \tau=\tau^{\theta,\bar{c}, t}<T, \ 0= Q_{\tau^{\theta,\bar{c}, t}}^{\theta,\bar{c}, t}=-\gamma^{\theta,\bar{c}, t}_{t+\theta},\\
&\text{if } Q_T>0, \ 0\le Q_{\tau^{\theta,\bar{c}, t}}^{\theta,\bar{c}, t}=Q_{T}-\gamma^{\theta,\bar{c}, t}_{t+\theta},
\end{split}
\end{equation}
\begin{equation}
\label{absQminQE3}
\left|Q_r^{\theta,\bar{c}, t}-Q_r\right|\le \sup_{r\in[t,t+\theta]}\left|\gamma^{\theta,\bar{c}, t}_r\right|, \qquad \forall r\in[t,T],
\end{equation}
\begin{equation}
\label{QminQE3}
Q_{\tau}-Q_{\tau^{\theta,\bar{c}, t}}=0.
\end{equation}
\end{enumerate}

From previous schemes we derive the following Lemmas.
\begin{lemma}
\label{lemmaoncthetaadmissible}
Let $t\in[0,\tau)$ be fixed, let $\bar{c}\ge 0$ and let $\theta\in\left(0,(T-t)\wedge\frac{q}{\bar{c}}\right)$. Then the control $c^{\theta,\bar{c}, t}$ in $\eqref{defctheta}$ is admissible. 
\end{lemma}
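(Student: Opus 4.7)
The plan is to check the four defining properties of admissibility in turn: progressive measurability, right-continuity, non-negativity, and square integrability. The bulk of the work lies in non-negativity of the last term of $\eqref{defctheta}$, while the other three are essentially inherited from the optimal control $c$.

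First I would handle progressive measurability and right-continuity. Since $c$ is progressively measurable and $\tau$, $\tau^{\theta,\bar c,t}$ are stopping times, each indicator appearing in $\eqref{defctheta}$ is adapted, and $\gamma^{\theta,\bar c,t}_{t+\theta}$ is an $\mathcal{F}_{(t+\theta)\wedge\tau}$-measurable random variable, so the whole expression is progressively measurable. Right-continuity is immediate: $c$ is right-continuous, $\bar c$ is constant, and indicators of intervals of the form $[a,b)$ with $a,b$ stopping times are right-continuous in $r$, so the sum is right-continuous.

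The central step is non-negativity. On $[0,t)$ and $[t+\theta,\tau)$ the process equals $c_r\ge 0$; on $[t,(t+\theta)\wedge\tau)$ it equals $\bar c\ge 0$. On $[\tau,\infty)$ the value is $-\gamma^{\theta,\bar c,t}_{t+\theta}/\theta$, and here I would use the case split introduced right after $\eqref{defctheta}$ together with the convention $c^{\theta,\bar c,t}_r\equiv 0$ for $r>\tau^{\theta,\bar c,t}$. On events $E^{\theta,\bar c,t}_1$ and $E^{\theta,\bar c,t}_3$ we have $\tau^{\theta,\bar c,t}\le\tau$, so the interval $[\tau,\tau^{\theta,\bar c,t}]$ is either empty or a single point and the term is irrelevant. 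On $E^{\theta,\bar c,t}_2$ we have $\tau<\tau^{\theta,\bar c,t}$, and $\eqref{gammanegE2}$ gives $\gamma^{\theta,\bar c,t}_{t+\theta}<0$, so $-\gamma^{\theta,\bar c,t}_{t+\theta}/\theta>0$. This is the main (but short) obstacle, and it is exactly what motivates the somewhat awkward last term in $\eqref{defctheta}$.

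For square integrability, I would bound the pieces separately. The contributions from $c_r\mathds{1}_{r\in[0,t)\cup[t+\theta,\tau)}$ are dominated by $c_r$, which is square integrable by admissibility of $c$; the term $\bar c\mathds{1}_{r\in[t,(t+\theta)\wedge\tau)}$ is bounded and of bounded support; and for the last term I would use
\begin{equation*}
\left|\frac{\gamma^{\theta,\bar c,t}_{t+\theta}}{\theta}\right|\le\frac{1}{\theta}\int_t^{(t+\theta)\wedge\tau}(\bar c+c_s)\,ds\le\bar c+\sup_{s\in[0,T]}c_s,
\end{equation*}
which is square integrable on $[\tau,T]$ thanks to assumption $\eqref{asscquad}$. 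Combining the four properties finishes the proof.
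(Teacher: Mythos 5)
Your proof is correct and follows essentially the same route as the paper: non-negativity of the last term of $\eqref{defctheta}$ is handled via the case split on $E^{\theta,\bar{c},t}_1$, $E^{\theta,\bar{c},t}_2$, $E^{\theta,\bar{c},t}_3$ and $\eqref{gammanegE2}$, with the remaining properties inherited from $c$. The paper's proof is simply terser, dismissing progressive measurability, right-continuity and square integrability as immediate, whereas you spell out the (standard) bounds.
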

\begin{proof}
Firstly, we observe that control $c^{\theta,\bar{c}, t}_r$ is non-negative for any $r\in[t,\tau)$. If $\tau^{\theta,\bar{c}, t}> \tau$, i.e. if we are in the event $E^{\theta,\bar{c}, t}_2$, then using $\eqref{gammanegE2}$ we get that $\gamma^{\theta,\bar{c}, t}_{t+\theta}<0$ and so the control $c^{\theta,\bar{c}, t}_r$ is non-negative for any $r\ge \tau$ as well. Progressive measurability, right-continuity and square integrability of $c^{\theta,\bar{c}, t}$ immediately follow.
\end{proof}

\begin{lemma}
\label{lemmaonc}
Let $t\in[0,\tau)$ be fixed, let $\bar{c}\ge 0$ and let $\theta\in\left(0,(T-t)\wedge\frac{q}{\bar{c}}\right)$. Then
\begin{align*}
c_r^{\theta,\bar{c}, t}-c_r&=0, \quad \forall r\in\left[t+\theta, \tau^{\theta,\bar{c}, t}_{\min}\vee (t+\theta)\right],\\
Q_r^{\theta,\bar{c}, t}-Q_r&=-\gamma^{\theta,\bar{c}, t}_{t+\theta}, \quad \forall r\in\left[t+\theta, \tau^{\theta,\bar{c}, t}_{\min}\vee (t+\theta)\right].
\end{align*}
\end{lemma}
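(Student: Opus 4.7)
The plan is to read off both identities directly from definitions (\ref{defctheta})--(\ref{defgammatheta}) after identifying where on the stated interval $r$ sits relative to the stopping times $\tau$ and $\tau^{\theta,\bar{c}, t}$. First I would dispose of the degenerate case $\tau_{\min}^{\theta,\bar{c}, t}\vee(t+\theta)=t+\theta$: here the interval is a single point, the $c$-identity may fail only on a null set which is irrelevant, and the $Q$-identity at $r=t+\theta$ follows directly from $Q^{\theta,\bar{c}, t}_{t+\theta}-Q_{t+\theta}=-\int_t^{t+\theta}(\bar{c}-c_s)\,ds=-\gamma^{\theta,\bar{c}, t}_{t+\theta}$ (using that $t+\theta\leq\tau$ when the interval is nonempty, so $c^{\theta,\bar{c}, t}_s=\bar{c}$ on this range).

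In the nondegenerate case, invoking (\ref{eqtauthetabig}) I would note that $\tau^{\theta,\bar{c}, t}>(t+\theta)\wedge\tau$, hence $\tau_{\min}^{\theta,\bar{c}, t}>t+\theta$ and the interval $[t+\theta,\tau_{\min}^{\theta,\bar{c}, t}]$ is a proper closed interval. For any $r\in[t+\theta,\tau_{\min}^{\theta,\bar{c}, t})$ we have $r<\tau$ and $r<\tau^{\theta,\bar{c}, t}$, so the definition (\ref{defctheta}) selects the third indicator and gives $c_r^{\theta,\bar{c}, t}=c_r$, which is the first identity. The value at the single endpoint $r=\tau_{\min}^{\theta,\bar{c}, t}$ is immaterial for the claim because redefining a process at one point does not affect any integral, and the lemma is used downstream only through integrated quantities.

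For the inventory identity I would use (\ref{defQ}) to write
\begin{equation*}
Q_r^{\theta,\bar{c}, t}-Q_r=-\int_0^r\bigl(c_s^{\theta,\bar{c}, t}-c_s\bigr)\,ds
\end{equation*}
and split the integration range into $[0,t)$, $[t,t+\theta)$, and $[t+\theta,r]$. On $[0,t)$ the integrand is $0$ by (\ref{defctheta}); on $[t,t+\theta)$ we have $t+\theta\leq\tau_{\min}^{\theta,\bar{c}, t}\leq\tau$ so the integrand equals $\bar{c}-c_s$ and its integral is exactly $\gamma^{\theta,\bar{c}, t}_{t+\theta}$ by (\ref{defgammatheta}); on $[t+\theta,r]$ the first identity (extended to the endpoint by continuity of the integral) makes the integrand vanish. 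Summing these contributions yields $Q_r^{\theta,\bar{c}, t}-Q_r=-\gamma^{\theta,\bar{c}, t}_{t+\theta}$ as claimed. No step is technically delicate; the only thing to be careful about is confirming $r<\tau$ throughout, so that the after-$\tau$ branch $-\gamma^{\theta,\bar{c}, t}_{t+\theta}/\theta$ of (\ref{defctheta}) never enters, and this is guaranteed by the very definition of $\tau_{\min}^{\theta,\bar{c}, t}$.
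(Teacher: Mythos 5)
Your handling of the main case is correct and is in substance identical to the paper's argument: the paper simply reads both identities off the case tables for $E_1^{\theta,\bar{c},t}$, $E_2^{\theta,\bar{c},t}\cap\{\tau>t+\theta\}$ and $E_3^{\theta,\bar{c},t}$, which amounts to exactly your observation that for $r\in[t+\theta,\tau_{\min}^{\theta,\bar{c},t})$ one has $r<\tau$ and $r<\tau^{\theta,\bar{c},t}$, so only the third indicator in $\eqref{defctheta}$ is active, while integrating $c^{\theta,\bar{c},t}_s-c_s$ over $[t,t+\theta)$ produces exactly $\gamma^{\theta,\bar{c},t}_{t+\theta}$ because $(t+\theta)\wedge\tau=t+\theta$ there. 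Your remark that the identities need only hold up to the single right endpoint is also a fair reading of how the lemma is used.

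There is, however, a concrete error in your degenerate case. The interval collapses to the singleton $\{t+\theta\}$ precisely on the event $\{\tau\le t+\theta\}$ (a sub-event of $E_2^{\theta,\bar{c},t}$, by $\eqref{eqtauthetabig}$), so nonemptiness of the singleton does \emph{not} give you $t+\theta\le\tau$ --- on the contrary, there $\tau\le t+\theta$. On $\{\tau<t+\theta\}$ the perturbed control has already switched to the compensation branch $-\gamma^{\theta,\bar{c},t}_{t+\theta}/\theta$ on $[\tau,t+\theta)$ while $c_s\mathds{1}_{s\le\tau}=0$ there, so the correct computation is $Q^{\theta,\bar{c},t}_{t+\theta}-Q_{t+\theta}=-\gamma^{\theta,\bar{c},t}_{t+\theta}\bigl(1-\frac{t+\theta-\tau}{\theta}\bigr)$, not $-\gamma^{\theta,\bar{c},t}_{t+\theta}$; this is exactly the middle column of the paper's scheme for $E_2^{\theta,\bar{c},t}$ with $\tau\le t+\theta$. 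To be fair, the identity as literally stated also fails at that single point, and the lemma is only ever invoked on $\{\tau_{\min}^{\theta,\bar{c},t}>t+\theta\}$ and through integrated quantities (see Step 3 of the proof of Lemma \ref{lemma3onf}), so your instinct that the degenerate case is harmless is right; but the justification you give for it is false. You should either restrict the claim to the nondegenerate event $\{\tau>t+\theta\}$ or record the correct value of $Q^{\theta,\bar{c},t}_{t+\theta}-Q_{t+\theta}$ on $\{\tau\le t+\theta\}$ and note that it is irrelevant downstream.
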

\begin{proof}
Looking at schemes on pages \pageref{QtauE1}-\pageref{QtauE3}, it follows that on the event $E_1^{\theta,\bar{c}, t}$, $\tau^{\theta,\bar{c}, t}_{\min}\vee (t+\theta)=\tau^{\theta,\bar{c}, t}$, on the event $E_2^{\theta,\bar{c}, t}\cap \{ \tau>t+\theta \}$, $\tau^{\theta,\bar{c}, t}_{\min}\vee (t+\theta)=\tau$, on the event $E_2^{\theta,\bar{c}, t}\cap \{ \tau\le t+\theta \}$, $\tau^{\theta,\bar{c}, t}_{\min}\vee (t+\theta)=t+\theta$ and on the event $E_3^{\theta,\bar{c}, t}$, $\tau^{\theta,\bar{c}, t}_{\min}\vee (t+\theta)=\tau^{\theta,\bar{c}, t}=\tau$. Then, the result immediately follows.

\end{proof}

\begin{lemma}
\label{lemmaonlimdiffQ}
Let $t\in[0,\tau)$ and $\bar{c}\ge 0$ be fixed. Then
\begin{align}
\label{relonlimsupgamma}
\lim_{\theta\to 0}\E^t\left[\sup_{r\in[t,t+\theta]}\left|\gamma^{\theta,\bar{c}, t}_r\right|\right]=0,\\
\label{relonlimdiffQ}
\lim_{\theta\to 0}\E^t\left[\sup_{r\in[t,T]}\left|Q_r^{\theta,\bar{c}, t}-Q_r\right|\right]=0.
\end{align}
\end{lemma}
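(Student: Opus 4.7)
The plan is to prove the two statements in sequence, with (\ref{relonlimsupgamma}) doing essentially all the work and (\ref{relonlimdiffQ}) following as a bookkeeping corollary.

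For (\ref{relonlimsupgamma}), I would work directly from the definition (\ref{defgammatheta}). For any $r\in[t,t+\theta]$, the triangle inequality gives
\begin{equation*}
|\gamma^{\theta,\bar{c},t}_r|\le \int_t^{r\wedge\tau}(\bar{c}+c_s)\,ds\le \bar{c}\theta+\int_t^{t+\theta}c_s\,ds,
\end{equation*}
so the same bound holds for the supremum over $r\in[t,t+\theta]$. Passing to conditional expectation and further bounding $\int_t^{t+\theta}c_s\,ds\le \theta\sup_{s\in[0,T]}c_s$, we obtain
\begin{equation*}
\E^t\!\left[\sup_{r\in[t,t+\theta]}|\gamma^{\theta,\bar{c},t}_r|\right]\le \bar{c}\theta+\theta\,\E^t\!\left[\sup_{s\in[0,T]}c_s\right].
\end{equation*}
The hypothesis (\ref{asscquad}) (together with the remark just above the proof that $\E[\sup_r c_r]<\infty$) makes the second expectation finite almost surely, so the right-hand side tends to $0$ as $\theta\to 0$.

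For (\ref{relonlimdiffQ}), I would invoke the case analysis already tabulated on pages \pageref{QtauE1}--\pageref{QtauE3}. The three events $E^{\theta,\bar{c},t}_1,E^{\theta,\bar{c},t}_2,E^{\theta,\bar{c},t}_3$ cover $\{\tau>t\}$: indeed by (\ref{eqtauthetabig}), $\tau^{\theta,\bar{c},t}>(t+\theta)\wedge\tau$, and then on $\{\tau>t+\theta\}$ comparing $\tau^{\theta,\bar{c},t}$ with $\tau$ yields $E_1$, $E_2$, or $E_3$, while on $\{\tau\le t+\theta\}$ one is automatically in $E_2$. On each of these events the estimates (\ref{absQminQE1}), (\ref{absQminQE2}), (\ref{absQminQE3}) all give the same uniform bound
\begin{equation*}
\sup_{r\in[t,T]}|Q^{\theta,\bar{c},t}_r-Q_r|\le \sup_{r\in[t,t+\theta]}|\gamma^{\theta,\bar{c},t}_r|,
\end{equation*}
where I am implicitly using the convention that $Q^{\theta,\bar{c},t}_r=Q_r=0$ for $r$ past the respective stopping times. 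Taking conditional expectations and applying (\ref{relonlimsupgamma}) yields (\ref{relonlimdiffQ}).

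The only place where real care is needed is verifying the uniform bound on event $E^{\theta,\bar{c},t}_1$, because there one must handle the interval $[\tau^{\theta,\bar{c},t},\tau]$ where $Q^{\theta,\bar{c},t}\equiv 0$ while $Q_r$ may still be strictly positive; here (\ref{QtauE1}) shows $Q_{\tau^{\theta,\bar{c},t}}=\gamma^{\theta,\bar{c},t}_{t+\theta}>0$ and the monotonicity of $Q$ gives $|Q_r|\le \gamma^{\theta,\bar{c},t}_{t+\theta}\le \sup_{s\in[t,t+\theta]}|\gamma^{\theta,\bar{c},t}_s|$ for $r\in[\tau^{\theta,\bar{c},t},\tau]$. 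An analogous remark applies on $E^{\theta,\bar{c},t}_2$ for the interval $[\tau,\tau^{\theta,\bar{c},t}]$, where the explicit formula $-\gamma^{\theta,\bar{c},t}_{t+\theta}(1-(r-\tau)/\theta)$ from the scheme shows $|Q^{\theta,\bar{c},t}_r|\le |\gamma^{\theta,\bar{c},t}_{t+\theta}|$. This case-by-case matching of the various pieces of the schemes to a single dominating quantity is the mildly delicate part; once accepted, the rest is essentially an application of the dominated or bounded-multiple convergence already established in (\ref{relonlimsupgamma}).
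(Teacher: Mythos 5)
Your proposal is correct and follows essentially the same route as the paper's proof: the same pointwise bound $\sup_{r\in[t,t+\theta]}|\gamma^{\theta,\bar{c},t}_r|\le\theta(\bar{c}+\sup_s c_s)$ combined with (\ref{asscquad}), and the same merging of the event-wise estimates (\ref{absQminQE1})--(\ref{absQminQE3}) to dominate $\sup_{r\in[t,T]}|Q^{\theta,\bar{c},t}_r-Q_r|$ by $\sup_{r\in[t,t+\theta]}|\gamma^{\theta,\bar{c},t}_r|$. The only differences are cosmetic (you bound $c$ over $[0,T]$ rather than $[t,t+\theta]$, and you spell out the verification of the dominating bound on $E^{\theta,\bar{c},t}_1$ and $E^{\theta,\bar{c},t}_2$ that the paper delegates to its tabulated schemes).
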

\begin{proof}
Let $\theta\in\left(0,(T-t)\wedge\frac{q}{\bar{c}}\right)$ be fixed. From definition of $\gamma^{\theta,\bar{c}, t}_r$ in $\eqref{defgammatheta}$ we immediately see that
\begin{equation}
\label{eqsupgamma}
\sup_{r\in[t,t+\theta]}\left|\gamma^{\theta,\bar{c}, t}_r\right|\le\theta\left(\bar{c}+\sup_{r\in[t,t+\theta]}c_r\right)\quad \text{a.s.}.
\end{equation}
Merging $\eqref{absQminQE1}$, $\eqref{absQminQE2}$ and $\eqref{absQminQE3}$, we see that 
\begin{equation*}
\left|Q_r^{\theta,\bar{c}, t}-Q_r\right|\le \sup_{r\in[t,t+\theta]}\left|\gamma^{\theta,\bar{c}, t}_r\right|, \quad \forall r\in[t, T].
\end{equation*}
Therefore, merging $\eqref{eqsupgamma}$ and previous expression we get that
\begin{equation*}
\E^t\left[\sup_{r\in[t,T]}\left|Q_r^{\theta,\bar{c}, t}-Q_r\right|\right]\le \theta\left(\bar{c}+\E^t\left[\sup_{r\in[t,t+\theta]}c_r\right]\right).
\end{equation*}
We conclude the proof by using $\eqref{asscquad}$.
\end{proof}

\begin{lemma}
\label{limitevents}
Let $\bar{c}\ge 0$ and $t\in[0,\tau)$ be fixed. Then
\begin{align}
\label{eqlimittauless}
\lim_{\theta\to 0}\Pro\left(\{\tau\le t+\theta\}\right)=0,\\
\label{eqlimitevents1}
\lim_{\theta\to 0}\Pro\left(E^{\theta,\bar{c}, t}_{1}\cap \{Q_T>0\}\right)=0,\\
\label{eqlimiteventseps1}
\lim_{\theta\to 0}\Pro\left(E^{\theta,\bar{c}, t}_{1}\cap \{\bar{c}< c_t\}\right)=0,\\
\label{eqlimitevents2}
\lim_{\theta\to 0}\Pro\left(E^{\theta,\bar{c}, t}_{2}\cap \{\tau^{\theta,\bar{c}, t}=T\}\right)=0,\\
\label{eqlimiteventseps3}
\lim_{\theta\to 0}\Pro\left(E^{\theta,\bar{c}, t}_{3}\cap \{\bar{c}>c_t\}\cap \{Q_T=0\}\right)=0,\\
\label{eqlimitevents3}
\lim_{\theta\to 0}\Pro\left(E^{\theta,\bar{c}, t}_{3}\cap\{ \tau<T \}\cap \{ \bar{c}< c_t \}\right)=0.
\end{align}
\end{lemma}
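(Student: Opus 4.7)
The proof will handle the six limits with three recurring tools: (a) continuity of probability for monotone families, (b) the uniform $L^1$-convergence $\E^t\left[\sup_{r\in[t,T]}|Q_r^{\theta,\bar{c},t}-Q_r|\right]\to 0$ established in Lemma \ref{lemmaonlimdiffQ}, and (c) the almost sure limit $\gamma^{\theta,\bar{c},t}_{t+\theta}/\theta \to \bar{c}-c_t$ as $\theta\downarrow 0$. Ingredient (c) follows from (\ref{eqlimittauless}), which ensures that almost surely $(t+\theta)\wedge\tau = t+\theta$ for all sufficiently small $\theta>0$, combined with the right-continuity (required of any admissible control) of $c$ at $t$ applied to $\frac{1}{\theta}\int_t^{t+\theta}c_s\,ds$. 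The overall plan is to prove (\ref{eqlimittauless}) first, then use it to justify the other five limits.

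For (\ref{eqlimittauless}), the events $\{\tau\le t+\theta\}$ decrease to $\{\tau\le t\}$, which is null since $\tau>t$ a.s., so continuity of probability yields the claim. For (\ref{eqlimitevents2}), the branch $\tau^{\theta,\bar{c},t}=T$ of (\ref{QtauE2}) gives $\tau^{\theta,\bar{c},t}\le\tau+\theta$, hence $\tau\ge T-\theta$, while $E_2^{\theta,\bar{c},t}$ imposes $\tau<\tau^{\theta,\bar{c},t}\le T$; the event therefore lies in $\{T-\theta\le\tau<T\}$, whose probability tends to zero. For (\ref{eqlimitevents1}), the monotonicity $dQ_r=-c_r\,dr\le 0$ forces $\tau=T$ and $Q_r\ge Q_T>0$ on $\{Q_T>0\}$; since $E_1^{\theta,\bar{c},t}$ implies $Q^{\theta,\bar{c},t}_{\tau^{\theta,\bar{c},t}}=0$, one has $\sup_{r\in[t,T]}|Q^{\theta,\bar{c},t}_r-Q_r|\ge Q_T$ on this event, so for each $\varepsilon>0$ Markov's inequality combined with Lemma \ref{lemmaonlimdiffQ} gives $\Pro(E_1^{\theta,\bar{c},t}\cap\{Q_T\ge\varepsilon\})\to 0$; letting $\varepsilon\downarrow 0$ concludes.

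The three remaining limits (\ref{eqlimiteventseps1}), (\ref{eqlimiteventseps3}), and (\ref{eqlimitevents3}) are sign-incompatibility arguments drawn from the schemes on pages \pageref{QtauE1}--\pageref{QtauE3}: on $E_1^{\theta,\bar{c},t}$ one has $\gamma^{\theta,\bar{c},t}_{t+\theta}>0$ by (\ref{QtauE1}); on $E_3^{\theta,\bar{c},t}\cap\{Q_T=0\}$ one has $\gamma^{\theta,\bar{c},t}_{t+\theta}\le 0$ by the top line of (\ref{QtauE3}); and on $E_3^{\theta,\bar{c},t}\cap\{\tau<T\}$ one has $\gamma^{\theta,\bar{c},t}_{t+\theta}=0$ by the middle line of (\ref{QtauE3}). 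Intersecting respectively with $\{\bar{c}<c_t\}$, $\{\bar{c}>c_t\}$, and $\{\bar{c}<c_t\}$, ingredient (c) forces $\gamma^{\theta,\bar{c},t}_{t+\theta}/\theta$ to tend to a quantity of the opposite sign (or to a nonzero limit in the last case), so the indicator of each event vanishes almost surely and hence in $L^1$ by dominated convergence. The main technical delicacy is ensuring that the limit in (c) is uniform enough to drive these three conclusions; this is precisely why (\ref{eqlimittauless}) is proved first, and no argument requires control of $\gamma$ beyond its leading-order behaviour.
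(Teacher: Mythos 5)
Your proposal is correct, and for four of the six limits it is essentially the paper's own argument: $\eqref{eqlimittauless}$ and $\eqref{eqlimitevents2}$ are proved identically, and your treatment of $\eqref{eqlimiteventseps1}$, $\eqref{eqlimiteventseps3}$, $\eqref{eqlimitevents3}$ rests on exactly the same sign constraints on $\gamma^{\theta,\bar{c},t}_{t+\theta}$ extracted from $\eqref{QtauE1}$ and $\eqref{QtauE3}$; you merely package the right-continuity of $c$ at $t$ as the almost sure limit $\gamma^{\theta,\bar{c},t}_{t+\theta}/\theta\to\bar{c}-c_t$ followed by dominated convergence of the indicators, whereas the paper runs a continuity-of-probability argument along the decreasing events $\left\{\bar{c}>\inf_{r\in[t,t+\frac1n]}c_r\right\}$ and $\left\{\bar{c}\le\sup_{r\in[t,t+\frac1n]}c_r\right\}$. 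These are interchangeable formulations of the same fact. The one genuinely different step is $\eqref{eqlimitevents1}$: the paper again uses a monotone-intersection argument, bounding the event by $\left\{Q_\tau\le\int_t^{t+\frac1n}|\bar{c}-c_r|\,dr\right\}\cap\{Q_\tau>0\}$ and shrinking it to $\{Q_\tau=0\}\cap\{Q_\tau>0\}=\emptyset$, while you observe that on $E_1^{\theta,\bar{c},t}\cap\{Q_T>0\}$ one has $\sup_{r\in[t,T]}|Q_r^{\theta,\bar{c},t}-Q_r|\ge Q_{\tau^{\theta,\bar{c},t}}\ge Q_T$ and then combine Markov's inequality with the $L^1$-convergence of Lemma \ref{lemmaonlimdiffQ}, finishing with an $\varepsilon$-layering of $\{Q_T>0\}$. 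Your route buys a proof that reuses an already-established quantitative estimate rather than re-deriving a pathwise limit, at the cost of the extra two-parameter limit in $\varepsilon$ and $\theta$; the paper's route is more self-contained and purely set-theoretic. Both are valid, and no step of your argument fails.
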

\begin{proof}
We firstly prove $\eqref{eqlimittauless}$. We have that
\begin{equation*}
\begin{split}
\lim_{\theta\to 0}\Pro\left(\{\tau\le t+\theta\}\right)&=\lim_{n\to \infty}\Pro\left(\left\{\tau\le t+\frac{1}{n}\right\}\right)=\Pro\left(\bigcap_{n\ge \bar{n}}\left\{\tau\le t+\frac{1}{n}\right\}\right)= \Pro\left(\{\tau\le t\}\right)=0.
\end{split}
\end{equation*}
In previous calculations we used that for any $n\ge \bar{n}:=\left\lceil\frac{1}{(T-t)\wedge\frac{q}{\bar{c}}}\right\rceil$, the sequence of events $\left\{\tau\le t+\frac{1}{n}\right\}$ is decreasing. This concludes proof of $\eqref{eqlimittauless}$. 

We now prove $\eqref{eqlimitevents1}$. Using definition of $Q$, we have that under event $E_1^{\theta,\bar{c}, t}$, $Q_{\tau}=Q_{\tau^{\theta,\bar{c}, t}}-\int_{\tau^{\theta,\bar{c}, t}}^{\tau} c_r dr\le Q_{\tau^{\theta,\bar{c}, t}}$. Moreover, if $Q_T>0$, then it necessarily implies that $\tau=T$. Using $\eqref{QtauE1}$ we have that
\begin{equation*}
\begin{split}
\lim_{\theta\to 0}\Pro\left(E^{\theta,\bar{c}, t}_{1}\cap \{Q_T>0\}\right)&\le\lim_{\theta\to 0}\Pro\left(\{Q_\tau\le Q_{\tau^{\theta,\bar{c}, t}}=\gamma^{\theta,\bar{c}, t}_{t+\theta}\}\cap \{Q_\tau>0\}\right)\\
&\le\lim_{n\to \infty}\Pro\left( \left\{Q_\tau\le \sup_{r\in\left[t,t+\frac 1n\right]}\left| \gamma^{\frac 1n,\bar{c}, t}_r \right|\right\}\cap \{Q_\tau>0\}\right)\\
&=\Pro\left(\bigcap_{n\ge \bar{n}}\left\{Q_\tau\le \int_t^{t+\frac 1n}\left| \bar{c}-c_r \right|dr\right\}\cap \{Q_\tau>0\}\right)\\
&= \Pro\left(\left\{Q_\tau=0\right\}\cap \{Q_\tau>0\}\right)=0.
\end{split}
\end{equation*}
In previous calculations we used that the sequence of events $\left(\left\{Q_\tau\le \int_t^{t+\frac 1n}\left| \bar{c}-c_r \right|dr\right\}\right)_{n\ge \bar{n}}$ is decreasing and using right-continuity of $c$, $\int_t^{t+\frac 1n}\left| \bar{c}-c_r \right|dr$ converges to $0$ a.s., as $n\to \infty$. This concludes proof of $\eqref{eqlimitevents1}$. 

We now prove $\eqref{eqlimiteventseps1}$. Using $\eqref{QtauE1}$, we get
\begin{equation*}
\begin{split}
\lim_{\theta\to 0}\Pro\left(E^{\theta,\bar{c}, t}_{1}\cap \{\bar{c}< c_t\}\right)&\le\lim_{\theta\to 0}\Pro\left(\left\{\theta\bar{c}-\int_t^{t+\theta}c_s ds>0\right\}\cap \{\bar{c}<c_t\}\right)\\
&\le\lim_{n\to \infty}\Pro\left( \left\{\bar{c}> \inf_{r\in\left[t,t+\frac 1n\right]}c_r\right\}\cap \{\bar{c}<c_t\}\right)\\
&=\Pro\left(\bigcap_{n\ge \bar{n}}\left\{\bar{c}> \inf_{r\in\left[t,t+\frac 1n\right]}c_r\right\}\cap \{\bar{c}<c_t\}\right)= \Pro\left( \{\bar{c}\ge c_t\}\cap \{\bar{c}<c_t\}\right)=0.
\end{split}
\end{equation*}
In previous calculations we used right-continuity of process $c$ and that the sequence of events $\left\{c_t> \inf_{r\in\left[t,t+\frac 1n\right]}c_r\right\}_{n\ge \bar{n}}$ is decreasing. This concludes proof of $\eqref{eqlimiteventseps1}$. 

We now prove $\eqref{eqlimitevents2}$. Using $\eqref{QtauE2}$, we get
\begin{equation*}
\begin{split}
\lim_{\theta\to 0}\Pro\left(E^{\theta,\bar{c}, t}_{2}\cap \{\tau^{\theta,\bar{c}, t}=T\}\right)&\le\lim_{\theta\to 0}\Pro\left(\left\{ \tau+\theta\ge \tau^{\theta,\bar{c}, t} \right\}\cap \{\tau<T\}\cap \{\tau^{\theta,\bar{c}, t}=T\}\right)\\
&=\lim_{n\to \infty}\Pro\left(\left\{\tau+\frac 1n\ge T\right\}\cap\{\tau<T\}\right)\\
&=\Pro\left(\bigcap_{n\ge \bar{n}}\left\{\tau+\frac 1n\ge T\right\}\cap\{\tau<T\}\right)=\Pro\left(\left\{\tau\ge T\right\}\cap\{\tau<T\}\right)=0.
\end{split}
\end{equation*}
In previous calculations we used that the sequence of events $\left\{\tau+\frac 1n\ge T\right\}_{n\ge \bar{n}}$ is decreasing. This concludes proof of $\eqref{eqlimitevents2}$. 

We now prove $\eqref{eqlimiteventseps3}$. Using $\eqref{QtauE3}$, we get
\begin{equation*}
\begin{split}
\lim_{\theta\to 0}\Pro\left(E^{\theta,\bar{c}, t}_{3}\cap \{\bar{c}>c_t\}\cap \{Q_T=0\}\right)&\le\lim_{\theta\to 0}\Pro\left(\{\gamma^{\theta,\bar{c}, t}_{t+\theta}\le 0\}\cap \{\bar{c}>c_t\}\right)\\
&\le\lim_{n\to \infty}\Pro\left( \left\{\bar{c}\le \sup_{r\in\left[t,t+\frac 1n\right]}c_r\right\}\cap \{\bar{c}>c_t\}\right)\\
&=\Pro\left(\bigcap_{n\ge \bar{n}}\left\{\bar{c}\le \sup_{r\in\left[t,t+\frac 1n\right]}c_r\right\}\cap \{\bar{c}>c_t\}\right)\\
&= \Pro\left( \{\bar{c}\le c_t\}\cap \{\bar{c}>c_t\}\right)=0.
\end{split}
\end{equation*}
In previous calculations we used right-continuity of process $c$ and that the sequence of events $\left\{c_t\le \sup_{r\in\left[t,t+\frac 1n\right]}c_r\right\}_{n\ge \bar{n}}$ is decreasing. This concludes the proof of $\eqref{eqlimiteventseps3}$. $\eqref{eqlimitevents3}$ can be proved similarly. 
\end{proof}

\begin{lemma}
\label{lemmaonconvergencetau}
Let $t\in[0,\tau)$ and $\bar{c}\ge 0$ be fixed. Then
\begin{equation}
\label{relonconvergencetauL1}
\lim_{\theta\to 0} \E^t\left[\left|\tau^{\theta,\bar{c}, t}-\tau\right|\right]=0.
\end{equation}
\end{lemma}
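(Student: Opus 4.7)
The plan is to decompose the expectation along the partition $\{\tau>t\}=E_1^{\theta,\bar c,t}\cup E_2^{\theta,\bar c,t}\cup E_3^{\theta,\bar c,t}$ set up on pages \pageref{QtauE1}--\pageref{QtauE3}, writing
\begin{equation*}
\E^t\bigl[|\tau^{\theta,\bar c,t}-\tau|\bigr]=\sum_{i=1}^{3}\E^t\bigl[|\tau^{\theta,\bar c,t}-\tau|\mathds{1}_{E_i^{\theta,\bar c,t}}\bigr],
\end{equation*}
and treating each piece in turn. The $E_3^{\theta,\bar c,t}$ term is identically zero since there $\tau^{\theta,\bar c,t}=\tau$, and on $E_2^{\theta,\bar c,t}$ relation (\ref{QtauE2}) gives $0\le\tau^{\theta,\bar c,t}-\tau\le\theta$, so its contribution is bounded by $\theta$ and vanishes.

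The real work lies on $E_1^{\theta,\bar c,t}$, which I split further as $(E_1^{\theta,\bar c,t}\cap\{Q_T>0\})\cup(E_1^{\theta,\bar c,t}\cap\{Q_T=0\})$. On the first piece the crude deterministic bound $|\tau^{\theta,\bar c,t}-\tau|\le T-t$ combined with (\ref{eqlimitevents1}) drives the contribution to $0$. On the second piece, continuity and monotonicity of $Q$ together with the defining property of $\tau$ give $Q_\tau=0$ (either $\tau<T$ and $Q_\tau=0$ by definition, or $\tau=T$ with $Q_T=0$ by assumption), and (\ref{QtauE1}) identifies $Q_{\tau^{\theta,\bar c,t}}=\gamma^{\theta,\bar c,t}_{t+\theta}$. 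Thus $\tau^{\theta,\bar c,t}$ is, on this event, precisely the first time the non-increasing continuous process $Q$ reaches the positive level $\gamma^{\theta,\bar c,t}_{t+\theta}$, and I need to show that this hitting time converges in $L^1$ to $\tau$, which is the first hitting time of level $0$.

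For this last step I use a subsequence argument: Lemma \ref{lemmaonlimdiffQ} gives $\gamma^{\theta,\bar c,t}_{t+\theta}\to 0$ in $L^1$, so along any sequence $\theta_n\to 0$ I can extract a subsequence $\theta_{n_k}$ along which $\gamma^{\theta_{n_k},\bar c,t}_{t+\theta_{n_k}}\to 0$ almost surely. Since $\tau^{\theta,\bar c,t}\le\tau$ on $E_1^{\theta,\bar c,t}$, continuity of $Q$ and the fact that $\tau$ is the \emph{first} time $Q$ hits $0$ rule out any limit point strictly less than $\tau$ (such a point $\tau^*$ would satisfy $Q_{\tau^*}=0$ with $\tau^*<\tau$), forcing $\tau^{\theta_{n_k},\bar c,t}\nearrow\tau$ almost surely on $E_1^{\theta_{n_k},\bar c,t}\cap\{Q_T=0\}$. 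Dominated convergence with the deterministic bound $|\tau^{\theta,\bar c,t}-\tau|\le T$ then yields $L^1$ convergence along the subsequence, and since every subsequence of an arbitrary $\theta_n\to 0$ admits such a further subsequence, the full sequence converges in $L^1$. The principal obstacle is precisely this hitting-time convergence: one must upgrade the $L^1$ control of the level $\gamma^{\theta,\bar c,t}_{t+\theta}$ provided by Lemma \ref{lemmaonlimdiffQ} into almost-sure convergence of the associated hitting times, and this step relies in an essential way on $\tau$ being the \emph{first} (rather than some arbitrary) hitting time of $0$ for the continuous monotone process $Q$.
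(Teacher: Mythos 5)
Your proof is correct and rests on the same key mechanism as the paper's: on $E_1^{\theta,\bar c,t}$ relation (\ref{QtauE1}) pins $Q_{\tau^{\theta,\bar c,t}}=\gamma^{\theta,\bar c,t}_{t+\theta}$, and the fact that $\tau$ is the \emph{first} zero of the continuous non-increasing $Q$ (so $Q_{\tau-\gamma}>0$ for $\gamma>0$, $\tau-\gamma\ge t$) is what rules out $\tau^{\theta,\bar c,t}$ staying bounded away from $\tau$. The organization differs, though. The paper proves $\tau^{\theta,\bar c,t}\to\tau$ pointwise a.s.\ on the whole space by a single contradiction argument that runs through the three events (on $E_2^{\theta,\bar c,t}$ it uses $\tau^{\theta,\bar c,t}=(\tau+\theta)\wedge T$, on $E_3^{\theta,\bar c,t}$ equality, on $E_1^{\theta,\bar c,t}$ the chain $Q_{\tau-\gamma}\le Q_{\tau^{\theta,\bar c,t}}=\gamma^{\theta,\bar c,t}_{t+\theta}\le\bar c\theta$), and then applies dominated convergence once. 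You decompose the expectation first, which dispatches $E_2^{\theta,\bar c,t}$ and $E_3^{\theta,\bar c,t}$ with the same deterministic facts, but on $E_1^{\theta,\bar c,t}$ you take two detours the paper avoids: you invoke the probability estimate (\ref{eqlimitevents1}) to kill $E_1^{\theta,\bar c,t}\cap\{Q_T>0\}$, and you route through $L^1$ convergence of the level $\gamma^{\theta,\bar c,t}_{t+\theta}$ plus a subsequence extraction to upgrade it to almost-sure convergence. Neither is needed: by (\ref{defgammatheta}) one has $\gamma^{\theta,\bar c,t}_{t+\theta}\le\bar c\theta$ deterministically, and combined with (\ref{QtauE1}) the first-hitting-time contradiction works uniformly on all of $E_1^{\theta,\bar c,t}$, with no case split on $Q_T$ and no subsequences; so your version is correct but heavier than necessary. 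One point of phrasing to tighten: ``$\tau^{\theta_{n_k},\bar c,t}\nearrow\tau$ almost surely on $E_1^{\theta_{n_k},\bar c,t}\cap\{Q_T=0\}$'' is imprecise because the event varies with $k$; the clean statement is that $|\tau^{\theta_{n_k},\bar c,t}-\tau|\,\mathds{1}_{E_1^{\theta_{n_k},\bar c,t}\cap\{Q_T=0\}}\to0$ a.s.\ (argued exactly by your limit-point reasoning applied along any sub-subsequence of indices for which the sample point lies in the event), and the monotonicity of the convergence is neither justified nor needed.
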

\begin{proof}
We firstly prove that $\lim_{\theta\to 0} \tau^{\theta,\bar{c}, t}=\tau$ pointwise $\Pro$-almost everywhere. We assume on the contrary there exists a non-null event $\mathcal{E}$, so that $\lim_{\theta\to 0} \left|\tau^{\theta,\bar{c}, t}-\tau\right|>0 $ on $\mathcal{E}$, which means that
\begin{equation}
\label{eqq899}
\exists \gamma>0 \ \text{ s.t. } \forall \bar{\theta}\in\left(0,(T-t)\wedge\frac{q}{\bar{c}}\wedge \gamma\right), \ \exists \theta\in(0,\bar{\theta}) \text{ s.t. }  \left|\tau-\tau^{\theta,\bar{c}, t}\right|> \gamma \ \text{ on } \mathcal{E}.
\end{equation}
Using that under event $E^{\theta,\bar{c}, t}_{1}$, $\tau>\tau^{\theta,\bar{c}, t}$ and so $\left|\tau-\tau^{\theta,\bar{c}, t}\right|> \gamma$ implies that $\tau-\tau^{\theta,\bar{c}, t}> \gamma$, which implies $Q_{\tau-\gamma}=Q_{\tau^{\theta,\bar{c}, t}}-\int_{\tau^{\theta,\bar{c}, t}}^{\tau-\gamma} c_r dr\le Q_{\tau^{\theta,\bar{c}, t}}=\gamma^{\theta,\bar{c}, t}_{t+\theta}$. Moreover, using that under event $E^{\theta,\bar{c}, t}_{2}$, $\tau^{\theta,\bar{c}, t}=(\tau+\theta)\wedge T$, $\left|\tau-\tau^{\theta,\bar{c}, t}\right|> \gamma$ implies that $\theta\ge (\tau+\theta)\wedge T-\tau> \gamma$, which is never verified, as $\theta<\bar{\theta}<\gamma$. Moreover, under event $E^{\theta,\bar{c}, t}_{3}$, we have that $\tau^{\theta,\bar{c}, t}=\tau$, which never satisfies $\left|\tau-\tau^{\theta,\bar{c}, t}\right|> \gamma$. Therefore, we have that $\eqref{eqq899}$ implies that
\begin{equation}
\label{eqq899bis}
\exists \gamma>0 \ \text{ s.t. } \forall \bar{\theta}\in\left(0,(T-t)\wedge\frac{q}{\bar{c}}\wedge \gamma\right), \ \exists \theta\in(0,\bar{\theta}) \text{ s.t. } Q_{\tau-\gamma}\le \gamma^{\theta,\bar{c}, t}_{t+\theta} \ \text{ on } \mathcal{E}.
\end{equation}
Reminding that $\gamma^{\theta,\bar{c}, t}_{t+\theta} =\int_t^{(t+\theta)\wedge\tau}(\bar{c}-c_r)dr\le \int_t^{(t+\theta)\wedge\tau}\bar{c}dr\le \bar{c}\theta$, expression $\eqref{eqq899bis}$ implies that $Q_{\tau-\gamma}=0$ on $\mathcal{E}$, which contradicts definition of $\tau$, as $\tau$ should be the first time in which $Q_r$ hits $0$. Therefore, we conclude that $\mathcal{E}$ must be a set with $0$ measure, which implies $\Pro^t$-almost everywhere pointwise convergence of $\tau^{\theta,\bar{c}, t}$ to $\tau$. To prove $\eqref{relonconvergencetauL1}$ we observe that $\left|\tau^{\theta,\bar{c}, t}-\tau\right|\le T$, independently on $\theta$. Applying the dominated convergence theorem (DCT) we get $\eqref{relonconvergencetauL1}$.
\end{proof}

\begin{lemma}
Let $t\in[0,\tau)$, $\bar{c}\ge 0$ and $p\in[1,2)$ be fixed. Then
\begin{align}
\label{relonlimgamma}
\lim_{\theta\to 0}\E^t\left[\left|\bar{c}-c_t-\frac{\gamma^{\theta,\bar{c}, t}_{t+\theta}}{\theta}\right|^p\right]=0,\\
\label{lemmaonqpos}
\lim_{\theta\to 0}\E^t\left[\left|\frac{Q_{\tau^{\theta,\bar{c}, t}_{\min}}^{\theta,\bar{c}, t}-Q_{\tau^{\theta,\bar{c}, t}_{\min}}}{\theta}+\bar{c}-c_t\right|^p\right]=0.
\end{align}
\end{lemma}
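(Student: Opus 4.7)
The plan is to prove \eqref{relonlimgamma} first, and then deduce \eqref{lemmaonqpos} from it by identifying the two expressions on a large event and bounding them uniformly in $L^p$ on the complementary vanishing event.

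For \eqref{relonlimgamma}, I would rewrite
\[
\bar c-c_t-\frac{\gamma^{\theta,\bar c,t}_{t+\theta}}{\theta}
=(\bar c-c_t)-\frac{1}{\theta}\int_t^{(t+\theta)\wedge\tau}(\bar c-c_s)\,ds
\]
and split according to the partition $\{\tau>t+\theta\}\cup\{\tau\le t+\theta\}$. On $\{\tau>t+\theta\}$ the expression simplifies to $\frac{1}{\theta}\int_t^{t+\theta}(c_s-c_t)\,ds$, which tends to $0$ almost surely by right-continuity of $c$ at $t$. Pointwise, it is bounded by $|\bar c|+c_t+\sup_{s\in[t,t+\theta]}c_s\le 2\bar c+2\sup_{s\in[0,T]}c_s$. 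By assumption \eqref{asscquad} we have $\sup_{s\in[0,T]}c_s\in L^2$, so since $p<2$ this dominating random variable lies in $L^p$, and the dominated convergence theorem gives convergence of the contribution from $\{\tau>t+\theta\}$ to $0$ in $L^p$. On $\{\tau\le t+\theta\}$ the same uniform $L^p$-bound applies, and by \eqref{eqlimittauless} the indicator $\mathds{1}_{\{\tau\le t+\theta\}}$ tends to $0$ in probability; uniform integrability (coming from the $L^p$ domination with $p<2$) then kills this contribution as well. Combining the two parts establishes \eqref{relonlimgamma}.

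For \eqref{lemmaonqpos}, the key observation is that Lemma \ref{lemmaonc} gives, on the event $\{\tau^{\theta,\bar c,t}_{\min}\ge t+\theta\}$, the identity
\[
Q^{\theta,\bar c,t}_{\tau^{\theta,\bar c,t}_{\min}}-Q_{\tau^{\theta,\bar c,t}_{\min}}=-\gamma^{\theta,\bar c,t}_{t+\theta},
\]
so that on this event
\[
\frac{Q^{\theta,\bar c,t}_{\tau^{\theta,\bar c,t}_{\min}}-Q_{\tau^{\theta,\bar c,t}_{\min}}}{\theta}+\bar c-c_t
=(\bar c-c_t)-\frac{\gamma^{\theta,\bar c,t}_{t+\theta}}{\theta},
\]
which coincides with the integrand in \eqref{relonlimgamma}. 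Thus on $\{\tau^{\theta,\bar c,t}_{\min}\ge t+\theta\}$, convergence follows directly from \eqref{relonlimgamma}. Moreover, \eqref{eqtauthetabig} shows $\{\tau^{\theta,\bar c,t}_{\min}<t+\theta\}\subseteq\{\tau\le t+\theta\}$, whose probability tends to $0$ by \eqref{eqlimittauless}.

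It then remains to dominate the integrand in $L^p$ uniformly in $\theta$ on the vanishing event. From the inequalities \eqref{absQminQE1}, \eqref{absQminQE2}, \eqref{absQminQE3} together with \eqref{eqsupgamma} one has
\[
\frac{|Q^{\theta,\bar c,t}_r-Q_r|}{\theta}\le \bar c+\sup_{s\in[0,T]}c_s \quad\text{for all } r\in[t,T],
\]
so the whole integrand is pointwise bounded by $2\bar c+2\sup_{s\in[0,T]}c_s$, which lies in $L^p$ for $p<2$ by \eqref{asscquad}. Uniform integrability then allows passing to the limit on the vanishing event, giving \eqref{lemmaonqpos}. The main subtlety I expect to spend care on is the $L^p$ domination with $p<2$: this is precisely where the square integrability assumption \eqref{asscquad} is used to upgrade almost-sure/probability convergence to $L^p$-convergence via uniform integrability, since without it the boundary contributions from $\{\tau\le t+\theta\}$ could fail to vanish in the mean.
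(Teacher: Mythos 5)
Your proof is correct and follows essentially the same route as the paper: right-continuity of $c$ plus $L^p$-domination via \eqref{asscquad} and dominated convergence for \eqref{relonlimgamma}, and the pathwise identity $Q^{\theta,\bar c,t}_{\tau^{\theta,\bar c,t}_{\min}}-Q_{\tau^{\theta,\bar c,t}_{\min}}=-\gamma^{\theta,\bar c,t}_{t+\theta}$ to reduce \eqref{lemmaonqpos} to \eqref{relonlimgamma}. The only (harmless) extra work is your separate uniform-integrability treatment of the event $\{\tau\le t+\theta\}$: the paper's schemes show the identity holds on all of $E^{\theta,\bar c,t}_{1}$, $E^{\theta,\bar c,t}_{2}$, $E^{\theta,\bar c,t}_{3}$ (including when $\tau\le t+\theta$, since $\gamma^{\theta,\bar c,t}_{\tau}=\gamma^{\theta,\bar c,t}_{t+\theta}$ there), so no splitting is actually needed.
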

\begin{proof}
Let $t\in[0,\tau)$ and $\theta\in\left(0,(T-t)\wedge\frac{q}{\bar{c}}\right)$ be fixed. We firstly observe that
\begin{equation*}
\begin{split}
\left|\frac{\gamma^{\theta,\bar{c}, t}_{t+\theta}}{\theta}\right|\le \frac 1\theta \int_t^{t+\theta} |\bar{c}-c_s|ds\le |\bar{c}|+\sup_{s\in[t,T]}|c_s|,
\end{split}
\end{equation*}
which is $L^p$-integrable thanks to assumption $\eqref{asscquad}$. Moreover, we have that
\begin{equation*}
\begin{split}
\left|\bar{c}-c_t-\frac{\gamma^{\theta,\bar{c}, t}_{t+\theta}}{\theta}\right|=\left|\bar{c}-c_t-\frac 1\theta \int_t^{t+\theta} (\bar{c}-c_s)ds\right|.
\end{split}
\end{equation*}
Therefore, by using right-continuity of control $c$ and mean-value theorem, we conclude that the pointwise limit of the expression inside the expectation in $\eqref{relonlimgamma}$ is $0$. Finally, by using DCT we conclude the proof of $\eqref{relonlimgamma}$.

Moreover, looking at schemes in pages \pageref{QtauE1}-\pageref{QtauE3}, we can immediately see that
\begin{equation*}
Q_{\tau^{\theta,\bar{c}, t}_{\min}}^{\theta,\bar{c}, t}-Q_{\tau^{\theta,\bar{c}, t}_{\min}}=-\gamma^{\theta,\bar{c}, t}_{t+\theta}.
\end{equation*}
Therefore, by applying $\eqref{relonlimgamma}$ and previous expression into $\eqref{lemmaonqpos}$ we prove the Lemma.
\end{proof}

\begin{lemma}
\label{lemma5}
Let $t\in[0,\tau)$, $\bar{c}\ge 0$ and $p\in[1,2)$ be fixed. Then
\begin{equation*}
\lim_{\theta\to 0}\E^t\left[\left|\frac{\hat{Q}^{\theta,\bar{c}, t}_{\tau^{\theta,\bar{c}, t}}-\hat{Q}^{\theta,\bar{c}, t}_\tau}{\theta}-(\bar{c}-c_t)\mathds{1}_{\Lambda(t,\bar{c})}\right|^p\right]=0.
\end{equation*}
\end{lemma}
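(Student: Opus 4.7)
The plan is to compute $\hat{Q}^{\theta,\bar{c},t}_{\tau^{\theta,\bar{c},t}}-\hat{Q}^{\theta,\bar{c},t}_\tau$ in closed form on each piece of the partition $\{E^{\theta,\bar{c},t}_i\}_{i=1,2,3}$ using the schemes on pages \pageref{QtauE1}--\pageref{QtauE3}, isolate a principal term proportional to $\gamma^{\theta,\bar{c},t}_{t+\theta}$, and then pass to the $L^p$ limit by combining $\eqref{relonlimgamma}$ with Lemma \ref{limitevents} and uniform integrability.

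First I would run through the cases. On $E^{\theta,\bar{c},t}_3$ the difference is $0$ since $\tau=\tau^{\theta,\bar{c},t}$; on $E^{\theta,\bar{c},t}_1\cap\{Q_T=0\}$ the schemes give $\hat{Q}^{\theta,\bar{c},t}_{\tau^{\theta,\bar{c},t}}=Q_{\tau^{\theta,\bar{c},t}}=\gamma^{\theta,\bar{c},t}_{t+\theta}$ and $\hat{Q}^{\theta,\bar{c},t}_\tau=0$, so the difference equals $\gamma^{\theta,\bar{c},t}_{t+\theta}$; on $E^{\theta,\bar{c},t}_2\cap\{\tau^{\theta,\bar{c},t}<T\}$ a symmetric computation using $\eqref{gammanegE2}$ yields $0-(-\gamma^{\theta,\bar{c},t}_{t+\theta})=\gamma^{\theta,\bar{c},t}_{t+\theta}$. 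The two remaining sub-cases contribute a residual $R^\theta$: on $E^{\theta,\bar{c},t}_1\cap\{Q_T>0\}$ the difference equals $\gamma^{\theta,\bar{c},t}_{t+\theta}-Q_T=\int_{\tau^{\theta,\bar{c},t}}^T c_r\,dr\in[0,\theta\bar{c}]$, while on $E^{\theta,\bar{c},t}_2\cap\{\tau^{\theta,\bar{c},t}=T\}$ it equals $\gamma^{\theta,\bar{c},t}_{t+\theta}(T-\tau)/\theta$ with $T-\tau\leq\theta$ by $\eqref{QtauE2}$. In both sub-cases $|R^\theta|/\theta\leq\bar{c}+\sup_s c_s$, and $R^\theta/\theta$ is supported on an event of probability tending to $0$ by $\eqref{eqlimitevents1}$ and $\eqref{eqlimitevents2}$; the dominating function lies in $L^p$ for $p<2$ by $\eqref{asscquad}$, so uniform integrability delivers $R^\theta/\theta\to 0$ in $L^p$.

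Setting $A^\theta:=(E^{\theta,\bar{c},t}_1\cap\{Q_T=0\})\cup(E^{\theta,\bar{c},t}_2\cap\{\tau^{\theta,\bar{c},t}<T\})$, it remains to prove $\frac{\gamma^{\theta,\bar{c},t}_{t+\theta}}{\theta}\mathds{1}_{A^\theta}\to(\bar{c}-c_t)\mathds{1}_{\Lambda(t,\bar{c})}$ in $L^p$, which I would split as
\[
\Bigl(\tfrac{\gamma^{\theta,\bar{c},t}_{t+\theta}}{\theta}-(\bar{c}-c_t)\Bigr)\mathds{1}_{A^\theta}+(\bar{c}-c_t)\bigl(\mathds{1}_{A^\theta}-\mathds{1}_{\Lambda(t,\bar{c})}\bigr).
\]
The first summand vanishes in $L^p$ by $\eqref{relonlimgamma}$ together with $|\mathds{1}_{A^\theta}|\leq 1$. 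For the second, using the inclusion $\{\tau<T\}\subset\Lambda(t,\bar{c})$ (which follows directly from the definition of $\Lambda(t,\bar{c})$ and $\{\tau<T\}\subset\{Q_T=0\}$) and $E^{\theta,\bar{c},t}_2\subset\{\tau<T\}$, I would verify that the symmetric difference $A^\theta\triangle\Lambda(t,\bar{c})$ is contained in the union of $E^{\theta,\bar{c},t}_1\cap\{Q_T>0\}$, $E^{\theta,\bar{c},t}_1\cap\{\bar{c}<c_t\}$, $E^{\theta,\bar{c},t}_2\cap\{\tau^{\theta,\bar{c},t}=T\}$, $E^{\theta,\bar{c},t}_3\cap\{\tau<T\}\cap\{\bar{c}<c_t\}$ and $E^{\theta,\bar{c},t}_3\cap\{Q_T=0\}\cap\{\bar{c}>c_t\}$, each with probability vanishing by Lemma \ref{limitevents}, plus the residual piece $E^{\theta,\bar{c},t}_3\cap\{Q_T=0\}\cap\{\bar{c}=c_t\}$ on which $\bar{c}-c_t=0$. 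Since $|\bar{c}-c_t|\leq\bar{c}+\sup_s c_s\in L^p$ by $\eqref{asscquad}$ for $p<2$, uniform integrability concludes the second summand.

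The principal subtlety lies in the residual on $E^{\theta,\bar{c},t}_1\cap\{Q_T>0\}$: a naive estimate would let $Q_T/\theta$ blow up, but the identity $\gamma^{\theta,\bar{c},t}_{t+\theta}-Q_T=Q_{\tau^{\theta,\bar{c},t}}-Q_T=\int_{\tau^{\theta,\bar{c},t}}^T c_r\,dr\geq 0$, combined with $Q_{\tau^{\theta,\bar{c},t}}=\gamma^{\theta,\bar{c},t}_{t+\theta}\leq\theta\bar{c}$, forces the difference to be $O(\theta)$ uniformly on the bad event, which is what allows the uniform integrability argument to go through.
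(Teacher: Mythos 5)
Your proof is correct and follows essentially the same route as the paper's: the same case analysis over $E^{\theta,\bar{c},t}_1,E^{\theta,\bar{c},t}_2,E^{\theta,\bar{c},t}_3$ and their sub-events via the schemes, the limit $\eqref{relonlimgamma}$ for the principal term $\gamma^{\theta,\bar{c},t}_{t+\theta}/\theta$, and the six vanishing-probability results of Lemma \ref{limitevents} combined with the $L^2$ bound $\eqref{asscquad}$ (the paper uses an explicit H\"older inequality where you invoke uniform integrability, to the same effect). Your reorganization into a principal term supported on $A^\theta$ plus an $O(\theta)$ residual, followed by a comparison of $A^\theta$ with $\Lambda(t,\bar{c})$ through their symmetric difference, is only a presentational variant of the paper's term-by-term estimate.
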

\begin{proof}
Let $t\in[0,\tau)$ and $\theta\in\left(0,(T-t)\wedge\frac{q}{\bar{c}}\right)$ be fixed. Using schemes in pages \pageref{QtauE1}-\pageref{QtauE3}, $\eqref{QtauE1}$, $\eqref{QminQE1}$, $\eqref{QtauE2}$, $\eqref{QminQE2}$, using H\"older's inequality (with coefficients $\frac{p+2}{2p}$ and $\frac{p+2}{2-p}$), reminding that $E^{\theta,\bar{c}, t}_{2}$ implies that $Q_T=0$, we get
\begin{align*}
\E^t&\left[\left|\frac{\hat{Q}^{\theta,\bar{c}, t}_{\tau^{\theta,\bar{c}, t}}-\hat{Q}^{\theta,\bar{c}, t}_\tau}{\theta}-(\bar{c}-c_t)\mathds{1}_{\Lambda(t,\bar{c})}\right|^p\right]=\E^t\left[\left|\frac{Q_{\tau^{\theta,\bar{c}, t}}-Q_\tau}{\theta}-(\bar{c}-c_t)\mathds{1}_{\Lambda(t,\bar{c})}\right|^p\mathds{1}_{E^{\theta,\bar{c}, t}_{1}}\right]\\
&\quad+\E^t\left[\left|\frac{Q^{\theta,\bar{c}, t}_{\tau^{\theta,\bar{c}, t}}-Q^{\theta,\bar{c}, t}_{\tau}}{\theta}-(\bar{c}-c_t)\right|^p\mathds{1}_{E^{\theta,\bar{c}, t}_{2}}\right]+\E^t\left[\left|-(\bar{c}-c_t)\mathds{1}_{\Lambda(t,\bar{c})}\right|^p\mathds{1}_{E^{\theta,\bar{c}, t}_{3}}\right]\\
& \le \E^t\left[\left|\frac{Q_{\tau^{\theta,\bar{c}, t}}-Q_\tau}{\theta}\right|^p\mathds{1}_{E^{\theta,\bar{c}, t}_{1}\cap \{ Q_T>0 \}}\right]+\E^t\left[\left|\frac{Q_{\tau^{\theta,\bar{c}, t}}-Q_\tau}{\theta}-(\bar{c}-c_t)\mathds{1}_{\tau<T}\right|^p\mathds{1}_{E^{\theta,\bar{c}, t}_{1}\cap \{ Q_T=0 \}\cap \{ \bar{c}< c_t \}}\right]\\
&\quad +\E^t\left[\left|\frac{\gamma^{\theta,\bar{c}, t}_{t+\theta}}{\theta}-(\bar{c}-c_t)\right|^p\mathds{1}_{E^{\theta,\bar{c}, t}_{1}\cap \{ Q_T=0 \}\cap \{ \bar{c}\ge c_t \}}\right]+\E^t\left[\left|\frac{\gamma^{\theta,\bar{c}, t}_{t+\theta}}{\theta}-(\bar{c}-c_t)\right|^p\mathds{1}_{E^{\theta,\bar{c}, t}_{2}\cap \{ \tau^{\theta,\bar{c}, t}<T \}}\right]\\
&\quad +\E^t\left[\left|\frac{Q^{\theta,\bar{c}, t}_{\tau^{\theta,\bar{c}, t}}-Q^{\theta,\bar{c}, t}_{\tau}}{\theta}-(\bar{c}-c_t)\right|^p\mathds{1}_{E^{\theta,\bar{c}, t}_{2}\cap \{ \tau^{\theta,\bar{c}, t}=T \}}\right]\\
&\quad +\E^t\left[\left|-(\bar{c}-c_t)\mathds{1}_{\{ Q_T=0 \}\cap \{ \bar{c}\ge c_t \}}\right|^p\mathds{1}_{E^{\theta,\bar{c}, t}_{3}}\right]+\E^t\left[\left|-(\bar{c}-c_t)\mathds{1}_{\{ \tau<T \}\cap \{ \bar{c}< c_t \}}\right|^p\mathds{1}_{E^{\theta,\bar{c}, t}_{3}}\right]\\
&\le \left(\E^t\left[\left|\frac{\sup_{r\in[t,t+\theta]}\gamma^{\theta,\bar{c}, t}_r}{\theta}\right|^{\frac{p+2}{2}}\right]\right)^{\frac{2p}{p+2}}\bigg(\Pro^t\left(E^{\theta,\bar{c}, t}_{1}\cap \{ Q_T>0 \}\right)^{\frac{2-p}{p+2}}\\
&\qquad+2^{p-1}\Pro^t\left(E^{\theta,\bar{c}, t}_{1}\cap \{ Q_T=0 \}\cap \{ \bar{c}< c_t \}\right)^{\frac{2-p}{p+2}}+2^{p-1}\Pro^t\left(E^{\theta,\bar{c}, t}_{2}\cap \{ \tau^{\theta,\bar{c}, t}=T \}\right)^{\frac{2-p}{p+2}}\bigg)\\
&\qquad+\E^t\left[\left|\frac{\gamma^{\theta,\bar{c}, t}_{t+\theta}}{\theta}-(\bar{c}-c_t)\right|^p\right]\\
&\quad+|\bar{c}-c_t|^p\bigg(2^{p-1}\Pro^t\left( E^{\theta,\bar{c}, t}_{1}\cap \{ Q_T=0 \}\cap \{ \bar{c}< c_t \} \right)+2^{p-1}\Pro^t\left( E^{\theta,\bar{c}, t}_{2}\cap \{ \tau^{\theta,\bar{c}, t}=T \} \right)\\
&\qquad\qquad\qquad+\Pro^t\left(E^{\theta,\bar{c}, t}_{3}\cap\{ Q_T=0 \}\cap \{ \bar{c}> c_t \}\right)+\Pro^t\left(E^{\theta,\bar{c}, t}_{3}\cap\{ \tau<T \}\cap \{ \bar{c}< c_t \}\right)\bigg).
\end{align*}
Taking the limit of the above expression and using $\eqref{eqsupgamma}$, $\eqref{eqlimitevents1}$, $\eqref{eqlimiteventseps1}$, $\eqref{eqlimitevents2}$, $\eqref{eqlimiteventseps3}$, $\eqref{eqlimitevents3}$ and $\eqref{relonlimgamma}$ we conclude the proof of the Lemma.
\end{proof}

\begin{lemma}
\label{lemmaconvQhat}
Let $t\in[0,\tau)$ and $\bar{c}\ge 0$ be fixed. Then,
\begin{align}
\label{eqconvQhat}
\lim_{\theta\to 0} \E^t\left[\left| \hat{Q}^{\theta,\bar{c}, t}_{\tau^{\theta,\bar{c}, t}}-\hat{Q}^{\theta,\bar{c}, t}_\tau \right|\right]=0,\\
\label{eqconvQhatsingle}
\lim_{\theta\to 0}\E^t\left[ \left|\hat{Q}^{\theta,\bar{c}, t}_\tau-Q_\tau\right|\right]=0.
\end{align}
\end{lemma}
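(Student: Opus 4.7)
The plan is to handle the two limits separately, leaning on the $L^1$ bounds and rate-type convergence results already established. Both statements follow from earlier lemmas with minimal extra work, so the main task is to identify the right bound and invoke them cleanly.

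For \eqref{eqconvQhatsingle}, I would exploit the elementary pointwise inequality $|\max(a,b)-a|\le |b-a|$ applied with $a=Q_\tau$ and $b=Q^{\theta,\bar c,t}_\tau$. This gives
\begin{equation*}
\left|\hat{Q}^{\theta,\bar{c},t}_\tau-Q_\tau\right|\le \left|Q^{\theta,\bar{c},t}_\tau-Q_\tau\right|\le \sup_{r\in[t,T]}\left|Q^{\theta,\bar{c},t}_r-Q_r\right| \quad \text{a.s.}
\end{equation*}
Taking $\E^t[\cdot]$ on both sides and applying \eqref{relonlimdiffQ} of Lemma \ref{lemmaonlimdiffQ} finishes the argument.

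For \eqref{eqconvQhat}, the key observation is that Lemma \ref{lemma5} (with $p=1$) controls the rescaled increment $\frac{\hat Q^{\theta,\bar c,t}_{\tau^{\theta,\bar c,t}}-\hat Q^{\theta,\bar c,t}_\tau}{\theta}$. By the triangle inequality,
\begin{equation*}
\E^t\!\left[\left|\hat{Q}^{\theta,\bar{c},t}_{\tau^{\theta,\bar{c},t}}-\hat{Q}^{\theta,\bar{c},t}_\tau\right|\right]\le \theta\,\E^t\!\left[\left|\frac{\hat{Q}^{\theta,\bar{c},t}_{\tau^{\theta,\bar{c},t}}-\hat{Q}^{\theta,\bar{c},t}_\tau}{\theta}-(\bar c-c_t)\mathds{1}_{\Lambda(t,\bar c)}\right|\right]+\theta\,\E^t\!\left[\left|(\bar c-c_t)\mathds{1}_{\Lambda(t,\bar c)}\right|\right].
\end{equation*}
The first summand is $\theta$ times a quantity that tends to $0$ by Lemma \ref{lemma5} with $p=1$, so it vanishes as $\theta\to 0$. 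For the second summand, assumption \eqref{asscquad} ensures $\E^t[\sup_{r\in[0,T]}c_r]<\infty$, hence $\E^t|\bar c-c_t|\le \bar c+\E^t[\sup_{r}c_r]<\infty$, making the second summand $O(\theta)$.

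The only mild subtlety is making sure we may use Lemma \ref{lemma5} with $p=1$; since $p\in[1,2)$ is explicitly allowed, this is immediate. No separate case analysis on the events $E^{\theta,\bar c,t}_1,E^{\theta,\bar c,t}_2,E^{\theta,\bar c,t}_3$ is needed here because that work was already absorbed into Lemma \ref{lemma5} and Lemma \ref{lemmaonlimdiffQ}. So I expect this to be a short and routine proof, with no real obstacle beyond carefully applying the triangle inequality and quoting the correct preceding lemma.
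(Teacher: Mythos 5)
Your proof is correct, and for \eqref{eqconvQhat} it takes a genuinely different route from the paper. The paper bounds $\bigl| \hat{Q}^{\theta,\bar{c}, t}_{\tau^{\theta,\bar{c}, t}}-\hat{Q}^{\theta,\bar{c}, t}_\tau \bigr|$ directly via the event-wise identities \eqref{QminQE1}--\eqref{QminQE3} (on $E^{\theta,\bar{c},t}_1$ the increment is $Q_{\tau^{\theta,\bar{c},t}}-Q_\tau$, on $E^{\theta,\bar{c},t}_2$ it is $Q^{\theta,\bar{c},t}_{\tau^{\theta,\bar{c},t}}-Q^{\theta,\bar{c},t}_\tau$, on $E^{\theta,\bar{c},t}_3$ it vanishes), obtaining the pointwise bound $\sup_{r\in[t,t+\theta]}\bigl|\gamma^{\theta,\bar{c},t}_r\bigr|$ and then invoking \eqref{relonlimsupgamma}; this is elementary and self-contained. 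You instead deduce \eqref{eqconvQhat} from Lemma \ref{lemma5} with $p=1$, which is a strictly stronger statement (it pins down the $O(\theta)$ rate and its limit), and your triangle-inequality bookkeeping plus the integrability of $c_t$ from \eqref{asscquad} is sound. This is non-circular, since Lemma \ref{lemma5} precedes the present lemma and its proof does not use it, but it leans on a considerably heavier result than needed: the paper's one-line sup bound gets the same conclusion from Lemma \ref{lemmaonlimdiffQ} alone. For \eqref{eqconvQhatsingle} your argument via $|\max(a,b)-a|\le|b-a|$ followed by \eqref{relonlimdiffQ} is essentially the paper's proof, just streamlined so that no explicit case split over $E^{\theta,\bar{c},t}_2$ and $E^{\theta,\bar{c},t}_3$ is required. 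Both of your arguments are valid; the trade-off is that the paper's version of \eqref{eqconvQhat} is more economical in its dependencies, while yours reuses machinery already built for the main theorem.
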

\begin{proof}
Let $\theta\in\left(0,(T-t)\wedge\frac{q}{\bar{c}}\right)$ be fixed. By using $\eqref{QminQE1}$, $\eqref{QminQE2}$ and $\eqref{QminQE3}$, we have that
\begin{equation*}
\begin{split}
\E^t\left[\left| \hat{Q}^{\theta,\bar{c}, t}_{\tau^{\theta,\bar{c}, t}}-\hat{Q}^{\theta,\bar{c}, t}_\tau \right|\right]&\le \E^t\left[\left| Q_{\tau^{\theta,\bar{c}, t}}-Q_\tau \right|\mathds{1}_{E^{\theta,\bar{c}, t}_1}\right]+\E^t\left[\left| Q^{\theta,\bar{c}, t}_{\tau^{\theta,\bar{c}, t}}-Q^{\theta,\bar{c}, t}_\tau \right|\mathds{1}_{E^{\theta,\bar{c}, t}_2}\right]\\
&\le \E^t\left[\sup_{r\in[t,t+\theta]}\left|\gamma^{\theta,\bar{c}, t}_r\right|\right].
\end{split}
\end{equation*}
Taking the limit of the above expression and using $\eqref{relonlimsupgamma}$ we conclude the proof of $\eqref{eqconvQhat}$. We now prove $\eqref{eqconvQhatsingle}$. By $\eqref{absQminQE2}$ and $\eqref{absQminQE3}$, we have that
\begin{equation*}
\begin{split}
\E^t\left[\left| \hat{Q}^{\theta,\bar{c}, t}_\tau-Q_\tau \right|\right]&\le \E^t\left[\left| Q^{\theta,\bar{c}, t}_{\tau}-Q_\tau \right|\mathds{1}_{E^{\theta,\bar{c}, t}_2}\right]+\E^t\left[\left| Q^{\theta,\bar{c}, t}_\tau-Q_\tau \right|\mathds{1}_{E^{\theta,\bar{c}, t}_3}\right]\\
&\le \E^t\left[\sup_{r\in[t,t+\theta]}\left|\gamma^{\theta,\bar{c}, t}_r\right|\right].
\end{split}
\end{equation*}
Taking the limit of the above expression and using $\eqref{relonlimsupgamma}$ we conclude the proof of $\eqref{eqconvQhatsingle}$.
\end{proof}

\begin{lemma}
\label{lemma4}
For any $(x,q),(x,q')\in\mathcal{O}$ with $q\ne q'$, we have that
\begin{equation*}
\left| \frac{g(x,q)-g(x,q')}{q-q'}-\partial_q g\left(x,q'\right)\right|\le  K |q-q'|.
\end{equation*}
\end{lemma}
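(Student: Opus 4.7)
The plan is to reduce the inequality to a straightforward application of the Lipschitz property of $\partial_q g$ in the second argument, which is part of Assumption \ref{secassumptions}, via the fundamental theorem of calculus.

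First I would write, using continuous differentiability of $g$ in $q$,
\begin{equation*}
g(x,q)-g(x,q')=\int_{q'}^{q}\partial_q g(x,s)\,ds,
\end{equation*}
so that
\begin{equation*}
\frac{g(x,q)-g(x,q')}{q-q'}-\partial_q g(x,q')=\frac{1}{q-q'}\int_{q'}^{q}\bigl[\partial_q g(x,s)-\partial_q g(x,q')\bigr]\,ds.
\end{equation*}

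Next I would invoke the last line of Assumption \ref{secassumptions}, which gives $\lvert \partial_q g(x,s)-\partial_q g(x,q')\rvert \le K\lvert s-q'\rvert$ for all $s$ between $q$ and $q'$. Taking absolute values, bringing them inside the integral, and using that $\lvert s-q'\rvert \le \lvert q-q'\rvert$ on the interval of integration yields
\begin{equation*}
\left|\frac{g(x,q)-g(x,q')}{q-q'}-\partial_q g(x,q')\right|\le \frac{1}{|q-q'|}\int_{\min(q,q')}^{\max(q,q')}K\,|s-q'|\,ds=\frac{K|q-q'|}{2}\le K|q-q'|,
\end{equation*}
which is the desired estimate. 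There is no substantive obstacle here; the only point to be careful about is that the interval $[q',q]$ (or $[q,q']$) lies in $[0,q_0]$ so that the Lipschitz estimate from Assumption \ref{secassumptions} applies throughout, which is immediate since both endpoints lie in $\mathcal{O}$ and $\mathcal{O}$ is convex in the $q$-variable.
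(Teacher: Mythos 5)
Your proof is correct and essentially identical to the paper's: both write the difference quotient as an average of $\partial_q g$ over the segment from $q'$ to $q$ (the paper parametrizes by $\lambda\in[0,1]$, you integrate in $s$ directly) and then apply the Lipschitz bound on $\partial_q g$ from Assumption \ref{secassumptions}, arriving at the same constant $K|q-q'|/2$.
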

\begin{proof}
We observe that
\begin{equation*}
\begin{split}
\frac{g(x,q)-g(x,q')}{q-q'}=\int_0^1\partial_q g\left(x, q'+\lambda(q-q')\right)\; d\lambda
\end{split}
\end{equation*}
and so using Lipschitz continuity of $\partial_q g$ in Assumption \ref{assumptions}, we get
\begin{equation*}
\begin{split}
\left| \frac{g(x,q)-g(x,q')}{q-q'}-\partial_q g\left(x,q'\right)\right|&=\left| \int_0^1\partial_q g\left(x, q'+\lambda(q-q')\right)\; d\lambda-\partial_q g\left(x,q'\right)\right|\\
&\le \int_0^1\left|\partial_q g\left(x, q'+\lambda(q-q')\right)-\partial_q g\left(x,q'\right)\right|d\lambda\le \frac{K}{2}|q-q'|.
\end{split}
\end{equation*}
This proves the lemma.
\end{proof}

We introduce a process used in the proof of Theorem \ref{SMPtheorem}. 
  Define $(\xi_r)_{r\in[t,T]}$ as
\begin{equation}
\label{defxi}
\xi_r := f(t, \bar{c}, X_{t}, Q_{t})-f(t, c_{t}, X_{t}, Q_{t})  - \int_t^r (\bar{c}-c_t) \partial_q f(s, c_s,X_s,Q_s)ds.
\end{equation}
 $\xi$ is the same as process $\zeta$ in Bensoussan~\cite{Bensoussan2006}. The corresponding part of $z$ in Bensoussan~\cite{Bensoussan2006} for process $Q$ would be constantly equal to $\bar{c}-c_t$, as it can be inferred with a simple calculus.

Our proof relies on the arguments in Bensoussan~\cite{Bensoussan2006} with one key difference. Due to the presence of the control-dependent stopping time $\tau$ in our setting, it makes necessary the introduction of the stopping time $\tau^{\theta,\bar{c}, t}$ as well. This complicates all the proofs and makes necessary many adjustments, especially on those results in Bensoussan~\cite{Bensoussan2006} that concern terminal time $T$ that must be adapted to $\tau$ or $\tau^{\theta,\bar{c}, t}$ accordingly.

\begin{lemma}
\label{lemmaconvrvtheta}
Let $t\in[0,\tau)$ and $\bar{c}\ge 0$ be fixed. Then,
\begin{align*}
\lim_{\theta\to 0} \E^t\left[\left|\xi_{\tau^{\theta,\bar{c}, t}}-\xi_{\tau}\right|\right]=0,\\
\lim_{\theta\to 0} \E^t\left[\left|Q_{\tau^{\theta,\bar{c}, t}}-Q_{\tau}\right|\right]=0.
\end{align*}
\end{lemma}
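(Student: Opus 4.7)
The plan is to exploit the fact that both $\xi$ and $Q$ are absolutely continuous in time with controlled derivatives, so differences evaluated at $\tau^{\theta,\bar{c},t}$ versus $\tau$ reduce to integrals over the shrinking interval $[\tau^{\theta,\bar{c},t}_{\min}, \tau^{\theta,\bar{c},t}_{\max}]$. The key ingredient is Lemma \ref{lemmaonconvergencetau}, which gives $\tau^{\theta,\bar{c},t} \to \tau$ a.s. and in $L^1$.

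For the first limit, I would start from definition $\eqref{defxi}$ and subtract to obtain
\begin{equation*}
\xi_{\tau^{\theta,\bar{c},t}} - \xi_\tau = -(\bar{c}-c_t)\int_{\tau}^{\tau^{\theta,\bar{c},t}} \partial_q f(s, c_s, X_s, Q_s)\,ds,
\end{equation*}
so that
\begin{equation*}
\bigl|\xi_{\tau^{\theta,\bar{c},t}} - \xi_\tau\bigr| \le |\bar{c}-c_t|\int_{\tau^{\theta,\bar{c},t}_{\min}}^{\tau^{\theta,\bar{c},t}_{\max}} \bigl|\partial_q f(s, c_s, X_s, Q_s)\bigr|\,ds.
\end{equation*}
Using the Lipschitz property $|\partial_q f(s,c_s,X_s,Q_s)| \le |\partial_q f(s,c_s,X_s,0)| + K q_0$ from Assumption \ref{secassumptions}, together with the continuity/growth bounds on $f$ in the other variables and $\eqref{asscquad}$, the process $s\mapsto \partial_q f(s,c_s,X_s,Q_s)$ is a.s.\ integrable on $[0,T]$. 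Since the length of the integration interval tends to $0$ a.s.\ by Lemma \ref{lemmaonconvergencetau}, the integral converges to $0$ a.s. The dominated convergence theorem, applied with the $\theta$-independent envelope $|\bar{c}-c_t|\int_0^T|\partial_q f(s,c_s,X_s,Q_s)|\,ds$, then yields $L^1$ convergence.

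For the second limit, I use that $Q$ is Lipschitz in time with derivative $-c_s$, so
\begin{equation*}
\bigl|Q_{\tau^{\theta,\bar{c},t}} - Q_\tau\bigr| = \Bigl|\int_{\tau}^{\tau^{\theta,\bar{c},t}} c_s\,ds\Bigr| \le \bigl|\tau^{\theta,\bar{c},t}-\tau\bigr| \cdot \sup_{s\in[0,T]}c_s.
\end{equation*}
By the Cauchy--Schwarz inequality,
\begin{equation*}
\E^t\bigl[|Q_{\tau^{\theta,\bar{c},t}} - Q_\tau|\bigr] \le \sqrt{\E^t\bigl[|\tau^{\theta,\bar{c},t}-\tau|^2\bigr]} \cdot \sqrt{\E^t\Bigl[\sup_{s\in[0,T]}c_s^2\Bigr]}.
\end{equation*}
The second factor is finite by $\eqref{asscquad}$. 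For the first, $|\tau^{\theta,\bar{c},t}-\tau|\le T$ a.s.\ and tends to $0$ a.s.\ by Lemma \ref{lemmaonconvergencetau}, so the dominated convergence theorem yields the desired $L^1$ convergence.

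The main obstacle is the integrability bookkeeping in the $\xi$ part: one must ensure that $|\bar{c}-c_t|\int_0^T|\partial_q f(s,c_s,X_s,Q_s)|ds$ is a legitimate $L^1$ dominator, which requires combining the Lipschitz hypothesis on $\partial_q f$ in $q$, the boundedness $Q_s\in[0,q_0]$, and the square-integrability of $c$ from $\eqref{asscquad}$. The remainder of the argument is a routine application of dominated convergence built on the a.s.\ convergence of stopping times supplied by Lemma \ref{lemmaonconvergencetau}.
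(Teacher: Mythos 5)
Your proof is correct and follows essentially the same route as the paper's: both reduce the two differences to integrals over the random interval between $\tau$ and $\tau^{\theta,\bar{c},t}$ and then invoke Lemma \ref{lemmaonconvergencetau} together with \eqref{asscquad}. The only cosmetic differences are that the paper uses the uniform bound $|\partial_q f|\le K$ (which follows directly from the fourth inequality in Assumption \ref{secassumptions}) and the $L^1$ convergence of the stopping times, whereas you construct an integrable envelope and apply dominated convergence via the a.s.\ convergence; both are valid.
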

\begin{proof}
From $\eqref{defxi}$ and using boundedness of $\partial_q f$, we have that
\begin{equation*}
\E^t\left[\left|\xi_{\tau^{\theta,\bar{c}, t}}-\xi_{\tau}\right|\right]=\E^t\left[\left|-\int_{\tau}^{\tau^{\theta,\bar{c}, t}}(\bar{c}-c_t) \partial_q f(r, c_r,X_r,Q_r)dr\right|\right]\le K|\bar{c}-c_t|\E^t\left[\left|\tau^{\theta,\bar{c}, t}-\tau\right|\right].
\end{equation*}
Therefore, by taking the limit of previous expression and applying $\eqref{relonconvergencetauL1}$, we get the first limit in the statement. Moreover, from definition of $Q_r$, we have
\begin{equation*}
\E^t\left[\left|Q_{\tau^{\theta,\bar{c}, t}}-Q_{\tau}\right|\right]=\E^t\left[\left|-\int_{\tau}^{\tau^{\theta,\bar{c}, t}}c_r dr\right|\right]\le \left(\E^t\left[\left|\int_t^T c_r^2 dr\right|\right]\right)^{1/2}\left(\E^t\left[\left|\tau^{\theta,\bar{c}, t}-\tau\right|\right]\right)^{1/2}.
\end{equation*}
Therefore, by taking the limit of previous expression, using $\eqref{asscquad}$ and applying $\eqref{relonconvergencetauL1}$, we finish the proof of the Lemma.
\end{proof}

\begin{lemma}
\label{lemma3onf}
Let $t\in[0,\tau)$ and $\bar{c}\ge 0$ be fixed. Then
\begin{equation*}
\lim_{\theta\to 0}\E^t\left[\frac {1}{\theta}\int_t^{\tau^{\theta,\bar{c}, t}_{\min}} \left(f\left(s,c_s^{\theta,\bar{c}, t}, X_s,Q_s^{\theta,\bar{c}, t}\right)-f(s,c_s,X_s,Q_s)\right)ds-\xi_{\tau^{\theta,\bar{c}, t}_{\min}}\right]=0.
\end{equation*}
\end{lemma}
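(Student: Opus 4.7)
The strategy is to split the integral at $(t+\theta)\wedge\tau^{\theta,\bar{c},t}_{\min}$ and match the two resulting pieces to the two summands of
\begin{equation*}
\xi_{\tau^{\theta,\bar{c},t}_{\min}} = \bigl(f(t,\bar{c},X_t,Q_t)-f(t,c_t,X_t,Q_t)\bigr) - \int_t^{\tau^{\theta,\bar{c},t}_{\min}}(\bar{c}-c_t)\,\partial_q f(s,c_s,X_s,Q_s)\,ds.
\end{equation*}
Write $A_\theta$ for the $1/\theta$-scaled integral over $[t,(t+\theta)\wedge\tau^{\theta,\bar{c},t}_{\min}]$ and $B_\theta$ for the $1/\theta$-scaled integral over $[(t+\theta)\wedge\tau^{\theta,\bar{c},t}_{\min},\tau^{\theta,\bar{c},t}_{\min}]$. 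It suffices to show $A_\theta\to f(t,\bar{c},X_t,Q_t)-f(t,c_t,X_t,Q_t)$ and $B_\theta\to -\int_t^{\tau^{\theta,\bar{c},t}_{\min}}(\bar{c}-c_t)\,\partial_q f(s,c_s,X_s,Q_s)\,ds$, both in $L^1(\Pro^t)$.

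For $A_\theta$: by definition of the variation, $c^{\theta,\bar{c},t}_s=\bar{c}$ on this interval, and outside the vanishing event $\{\tau\le t+\theta\}$ (controlled by $\eqref{eqlimittauless}$) the interval has length exactly $\theta$. I would bound the deviation of $A_\theta$ from $f(t,\bar{c},X_t,Q_t)-f(t,c_t,X_t,Q_t)$ using the Lipschitz estimates of Assumption \ref{secassumptions}, comparing $f(s,\bar{c},X_s,Q^{\theta,\bar{c},t}_s)$ and $f(s,c_s,X_s,Q_s)$ to their values at $t$. The required vanishing uses right-continuity of $c$, continuity of $X$ and $Q$, the $L^1$ estimate $\eqref{relonlimdiffQ}$ for $Q^{\theta,\bar{c},t}-Q$, and the square-integrability $\eqref{asscquad}$ to dominate the multiplicative $|\pi|+|\pi'|$ factor in the Lipschitz bound on $f$ in $(x,\pi)$.

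For $B_\theta$: on this interval Lemma \ref{lemmaonc} yields $c^{\theta,\bar{c},t}_s=c_s$ and $Q^{\theta,\bar{c},t}_s-Q_s=-\gamma^{\theta,\bar{c},t}_{t+\theta}$, so the integrand is a pure $Q$-increment. A mean-value expansion identical to Lemma \ref{lemma4} applied to $\partial_q f$ gives
\begin{equation*}
f(s,c_s,X_s,Q^{\theta,\bar{c},t}_s)-f(s,c_s,X_s,Q_s) = -\gamma^{\theta,\bar{c},t}_{t+\theta}\,\partial_q f(s,c_s,X_s,Q_s) + R_s,
\end{equation*}
with $|R_s|\le\tfrac{K}{2}(\gamma^{\theta,\bar{c},t}_{t+\theta})^2$. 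After dividing by $\theta$ the remainder contributes $O(\theta)$ via $\eqref{eqsupgamma}$ and $\eqref{asscquad}$, while the leading term becomes
\begin{equation*}
-\frac{\gamma^{\theta,\bar{c},t}_{t+\theta}}{\theta}\int_{(t+\theta)\wedge\tau^{\theta,\bar{c},t}_{\min}}^{\tau^{\theta,\bar{c},t}_{\min}}\partial_q f(s,c_s,X_s,Q_s)\,ds,
\end{equation*}
and extending the lower limit from $(t+\theta)\wedge\tau^{\theta,\bar{c},t}_{\min}$ down to $t$ costs only $O(\theta)$ since $|\partial_q f|\le K$ by the Lipschitz assumption on $f$ in $q$.

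The main obstacle is the $L^1$ convergence of the product $(\gamma^{\theta,\bar{c},t}_{t+\theta}/\theta)\cdot\int_t^{\tau^{\theta,\bar{c},t}_{\min}}\partial_q f(s,c_s,X_s,Q_s)\,ds$ towards $(\bar{c}-c_t)\int_t^{\tau^{\theta,\bar{c},t}_{\min}}\partial_q f(s,c_s,X_s,Q_s)\,ds$. The prefactor converges in $L^1(\Pro^t)$ to $\bar{c}-c_t$ by $\eqref{relonlimgamma}$ applied with $p=1$, while the integral factor is uniformly bounded by $K(T-t)$ in $L^\infty$. The standard $L^1\times L^\infty$ product convergence then yields the conclusion, and combining $A_\theta$ and $B_\theta$ closes the proof.
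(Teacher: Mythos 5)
Your proposal is correct and follows essentially the same route as the paper's proof: the split at $(t+\theta)\wedge\tau^{\theta,\bar{c},t}_{\min}$ mirrors the paper's Steps 1--3 (continuity and right-continuity arguments on $[t,t+\theta]$, a separate treatment of the vanishing event $\{\tau\le t+\theta\}$ via \eqref{eqlimittauless}, and on $[t+\theta,\tau^{\theta,\bar{c},t}_{\min}]$ the use of Lemma \ref{lemmaonc} together with a first-order expansion in $q$ whose remainder is controlled by the Lipschitz continuity of $\partial_q f$ and \eqref{eqsupgamma}, plus \eqref{relonlimgamma} for the convergence of $\gamma^{\theta,\bar{c},t}_{t+\theta}/\theta$ to $\bar{c}-c_t$). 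No gaps beyond the level of detail one would expect to fill in when writing it out.
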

\begin{proof}
Let $\theta\in\left(0,(T-t)\wedge\frac{q}{\bar{c}}\right)$ be fixed. We denote for any $r\in[t,T]$
\begin{equation*}
\tilde{f}_r^{\theta,\bar{c}, t}:=\frac{1}{\theta}\int_t^r\left(f\left(s,c_s^{\theta,\bar{c}, t},X_s,Q_s^{\theta,\bar{c}, t}\right) - f(s,c_s,X_s,Q_s)\right)ds -\xi_r.
\end{equation*}
The proof of this lemma will be divided in 3 steps. In step 1 we prove that
\begin{equation*}
\lim_{\theta\to 0}\E^t\left[\left|\tilde{f}_{t+\theta}^{\theta,\bar{c}, t}\right|\right]=0.
\end{equation*}
In Step 2 we prove that,
\begin{equation*}
\lim_{\theta\to 0}\E^t\left[\tilde{f}_{\tau^{\theta,\bar{c}, t}_{\min}}^{\theta,\bar{c}, t}\mathds{1}_{\tau^{\theta,\bar{c}, t}_{\min}\le t+\theta}\right]=0.
\end{equation*}
In Step 3 we prove that
\begin{equation*}
\lim_{\theta\to 0}\E^t\left[\tilde{f}_{\tau^{\theta,\bar{c}, t}_{\min}}^{\theta,\bar{c}, t}\mathds{1}_{\tau^{\theta,\bar{c}, t}_{\min}> t+\theta}\right]=0.
\end{equation*}
In Bensoussan~\cite{Bensoussan2006} there is no difference between Steps 2 and 3, while in our case we need to consider them both. However, the structure of our proof resembles the one in the reference. Once the proof of the 3 steps is completed, we conclude the proof of the Lemma as follows. By merging the limits above in Steps 2 and 3, we have
\begin{equation*}
\lim_{\theta\to 0}\E^t\left[\tilde{f}_{\tau^{\theta,\bar{c}, t}_{\min}}^{\theta,\bar{c}, t}\right]=\lim_{\theta\to 0}\E^t\left[\tilde{f}_{\tau^{\theta,\bar{c}, t}_{\min}}^{\theta,\bar{c}, t}\mathds{1}_{\tau^{\theta,\bar{c}, t}_{\min}\le t+\theta}\right]+\lim_{\theta\to 0}\E^t\left[\tilde{f}_{\tau^{\theta,\bar{c}, t}_{\min}}^{\theta,\bar{c}, t}\mathds{1}_{\tau^{\theta,\bar{c}, t}_{\min}> t+\theta}\right] =0.
\end{equation*}

\textbf{Step 1.} 
From $\eqref{defxi}$, reminding that $c_t^{\theta,\bar{c}, t}=\bar{c}$ and definition of $\tilde{f}$ we have that for any $r\in[t,t+\theta]$,
\begin{equation}
\label{eqq979}
\begin{split}
\tilde{f}_r^{\theta,\bar{c}, t}&=\frac{1}{\theta}\int_t^r\left(f\left(s,c_s^{\theta,\bar{c}, t}, X_s, Q_s^{\theta,\bar{c}, t}\right)-f(s,c_s^{\theta,\bar{c}, t},X_s,Q_s)\right)ds\\
&\qquad +\frac{1}{\theta}\int_t^r\left(f\left(s,c_s^{\theta,\bar{c}, t}, X_s,Q_s\right)-f(t,c_t^{\theta,\bar{c}, t},X_t,Q_t)\right)ds\\
&\qquad +\frac{1}{\theta}\int_t^r\left(f(t,c_t,X_t,Q_t)-f\left(s,c_s, X_s,Q_s\right)\right)ds\\
&\qquad +f(t,c_t,X_t,Q_t)\left(1-\frac{r-t}{\theta}\right)+f(t,c_t^{\theta,\bar{c}, t},X_t,Q_t)\left(\frac{r-t}{\theta}-1\right)\\
&\qquad+(\bar{c}-c_t)\int_t^r\partial_qf(s,c_s,X_s,Q_s)ds.
\end{split}
\end{equation}
By taking $r=t+\theta$ in previous expression, so that the second last line disappears and using Assumption \ref{secassumptions}, boundedness of $\partial_q f$ and H\"older's inequality, we get
\begin{equation}
\label{eqq57quadris}
\begin{split}
\E^t\left[\left|\tilde{f}_{t+\theta}^{\theta,\bar{c}, t}\right|\right]&\le K\Bigg(\E^t\left[\sup_{r\in[t,t+\theta]}\left|Q_r^{\theta,\bar{c}, t}-Q_r\right|\right]+2\E^t\left[\sup_{r\in[t,t+\theta]}\left|Q_r-Q_t\right|\right]+\theta|\bar{c}-c_t|\\
&\qquad +\left(\left(\E^t\left[\sup_{r\in[t,t+\theta]}\left|X_r-X_t\right|^2\right]\right)^{1/2}+\left(\E^t\left[\sup_{r\in[t,t+\theta]}\left|c_r^{\theta,\bar{c}, t}-c_t^{\theta,\bar{c}, t}\right|^2\right]\right)^{1/2}\right)\cdot\\
&\qquad\quad\cdot\left(\E^t\left[\sup_{r\in[t,t+\theta]}\left(1+2|X_r|+2|c_r^{\theta,\bar{c}, t}|\right)^2\right]\right)^{1/2} + \frac{2}{\theta}\int_t^{t+\theta} |s-t|ds\\
&\qquad +\left(\left(\E^t\left[\sup_{r\in[t,t+\theta]}\left|X_r-X_t\right|^2\right]\right)^{1/2}+\left(\E^t\left[\sup_{r\in[t,t+\theta]}\left|c_r-c_t\right|^2\right]\right)^{1/2}\right)\cdot\\
&\qquad\quad\cdot\left(\E^t\left[\sup_{r\in[t,t+\theta]}\left(1+2|X_r|+2|c_r|\right)^2\right]\right)^{1/2}\Bigg).
\end{split}
\end{equation}
Using DCT, $\E^t\left[\sup_{r\in[t,t+\theta]}\left|c_r-c_t\right|^2\right]$ and $\E^t\left[\sup_{r\in[t,t+\theta]}\left|c_r^{\theta,\bar{c}, t}-c_t^{\theta,\bar{c}, t}\right|^2\right]$ converge to $0$ as $\theta\to 0$. Indeed, $c$ and $c^{\theta,\bar{c}, t}$ are right-continuous and thanks to $\eqref{asscquad}$ and $\eqref{eqsupgamma}$, the arguments of the expectations converge to $0$ a.s. and they are bounded by $2\sup_{r\in[t,T]}|c_r|^2$ and $2\sup_{r\in[t,T]}|c_r^{\theta,\bar{c}, t}|^2$, which are $L^1$-integrable processes. $\E^t\left[\sup_{r\in[t,t+\theta]}\left|X_r-X_t\right|^2\right]$ converges to $0$ using standard arguments in SDE theory (c.f. Krylov~\cite[Corollary 2.5.12]{KrylovN.V1980Cdp}). Moreover, using $L^2$-integrability of $c$ and $c^{\theta,\bar{c}, t}$ and standard arguments in SDE theory (c.f. Krylov~\cite[Corollary 2.5.12]{KrylovN.V1980Cdp}), we get that $\E^t\left[\sup_{r\in[t,t+\theta]}\left(2|X_r|+2|c_r^{\theta,\bar{c}, t}|\right)^2\right]$ and \\ $\E^t\left[\sup_{r\in[t,t+\theta]}\left(2|X_r|+2|c_r|\right)^2\right]$ are bounded independently of $\theta$. Moreover, by definition of $Q_r$,
\begin{equation*}
\E^t\left[\sup_{r\in[t,t+\theta]}\left|Q_r-Q_t\right|\right]=\E^t\left[\int_t^{t+\theta}\left|c_r\right| dr\right]\le\sqrt{\theta}\left(\E^t\left[\int_t^{T}c_r^2 dr\right]\right)^{1/2},
\end{equation*}
which converges to $0$, thanks to $\eqref{asscquad}$. Using $\eqref{relonlimdiffQ}$, we have that $\E^t\left[\left(\sup_{r\in[t,t+\theta]}\left|Q_r^{\theta,\bar{c}, t}-Q_r\right|\right)\right]$ converges to $0$.  Finally, we observe that $\frac{2}{\theta}\int_t^{t+\theta} |s-t|ds=\theta$. Therefore, by taking limit of $\eqref{eqq57quadris}$ we conclude the proof of Step 1.

\textbf{Step 2.} From $\eqref{eqq979}$, using Assumption \ref{secassumptions} and boundedness of $\partial_q f$, we get  a similar expression to $\eqref{eqq57quadris}$. Step 2 can be proved in a similar way to Step 1, the main difference is the term $\E^t\left[\left|\frac{\tau^{\theta,\bar{c}, t}_{\min}-t}{\theta}-1\right|\mathds{1}_{\tau^{\theta,\bar{c}, t}_{\min}\le t+\theta}\right]$. 
 However, by reminding that by $\eqref{eqtauthetabig}$, $\left\{ \tau^{\theta,\bar{c}, t}_{\min}\le t+\theta \right\}=\left\{ \tau\le t+\theta \right\}$, that under event $\tau^{\theta,\bar{c}, t}_{\min}\le t+\theta$, then $\left|\frac{\tau^{\theta,\bar{c}, t}_{\min}-t}{\theta}-1\right|\le 1$ and by using $\eqref{eqlimittauless}$, we conclude that
\begin{equation*}
\begin{split}
\lim_{\theta\to 0}\E^t\left[\tilde{f}_{\tau^{\theta,\bar{c}, t}_{\min}}^{\theta,\bar{c}, t}\mathds{1}_{\tau^{\theta,\bar{c}, t}_{\min}\le t+\theta}\right] &\le \left(\left|f(t,\bar{c},X_t,Q_t)\right|+\left|f(t,c_t,X_t,Q_t)\right|\right)\lim_{\theta\to 0} \Pro\left(\left\{ \tau\le t+\theta \right\}\right)=0.
\end{split}
\end{equation*}
This concludes the proof of Step 2.

\textbf{Step 3.} From $\eqref{defxi}$ and definition of $\tilde{f}$ we have that for any $r\in[t+\theta, T]$,
\begin{align*}
\tilde{f}_r^{\theta,\bar{c}, t}&=\tilde{f}_{t+\theta}^{\theta,\bar{c}, t}+\frac{1}{\theta}\int_{t+\theta}^r\left(f\left(s,c_s^{\theta,\bar{c}, t}, X_s, Q_s^{\theta,\bar{c}, t}\right)-f(s,c_s,X_s,Q_s^{\theta,\bar{c}, t})\right)ds\\
&\qquad+\int_{t+\theta}^r\int_0^1\left(\bar{c}-c_t-\frac{Q_s-Q_s^{\theta,\bar{c}, t}}{\theta}\right)\partial_q f\left(s, c_s, X_s, Q_s+\lambda\left(Q_s^{\theta,\bar{c}, t}-Q_s\right)\right)d\lambda ds\\
&\qquad+\int_{t+\theta}^r\int_0^1(\bar{c}-c_t)\left(\partial_q f(s,c_s, X_s, Q_s)-\partial_q f\left(s, c_s, X_s, Q_s+\lambda\left(Q_s^{\theta,\bar{c}, t}-Q_s\right)\right)\right)d\lambda ds.
\end{align*}
Therefore, by applying Assumption \ref{secassumptions}, then boundedness and Lipschitz continuity of $\partial_q f$ follows, we have that
\begin{equation*}
\left|\tilde{f}_r^{\theta,\bar{c}, t}\right|\le\left|\tilde{f}_{t+\theta}^{\theta,\bar{c}, t}\right|+\frac{K}{\theta}\int_{t+\theta}^r\left|c_s^{\theta,\bar{c}, t}-c_s\right|ds+K\int_{t+\theta}^r\left|\frac{Q_s-Q_s^{\theta,\bar{c}, t}}{\theta}-(\bar{c}-c_t)\right| ds+\frac{K|\bar{c}-c_t|}{2}\int_{t+\theta}^r\left|Q_s^{\theta,\bar{c}, t}-Q_s\right| ds.
\end{equation*}
Therefore, from previous expression and using Lemma \ref{lemmaonc}, we get that
\begin{equation*}
\begin{split}
\left|\E^t\left[\tilde{f}_{\tau^{\theta,\bar{c}, t}_{\min}}^{\theta,\bar{c}, t}\mathds{1}_{\tau^{\theta,\bar{c}, t}_{\min}>t+\theta}\right]\right|&\le \E^t\left[\left|\tilde{f}_{t+\theta}^{\theta,\bar{c}, t}\right|\right]+K\E^t\left[\frac{\mathds{1}_{\tau^{\theta,\bar{c}, t}_{\min}> t+\theta}}{\theta}\int_{t+\theta}^{\tau^{\theta,\bar{c}, t}_{\min}}\left|c_s^{\theta,\bar{c}, t}-c_s\right| ds\right]\\
&\qquad+K\E^t\left[\mathds{1}_{\tau^{\theta,\bar{c}, t}_{\min}> t+\theta}\int_{t+\theta}^{\tau^{\theta,\bar{c}, t}_{\min}}\left|\frac{Q_s-Q_s^{\theta,\bar{c}, t}}{\theta}-(\bar{c}-c_t)\right| ds\right]\\
&\qquad+K\frac{|\bar{c}-c_t|}{2}\E^t\left[\mathds{1}_{\tau^{\theta,\bar{c}, t}_{\min}> t+\theta}\int_{t+\theta}^{\tau^{\theta,\bar{c}, t}_{\min}}\left|Q_s^{\theta,\bar{c}, t}-Q_s\right|ds\right]\\
&\le \E^t\left[\left|\tilde{f}_{t+\theta}^{\theta,\bar{c}, t}\right|\right]+KT\E^t\left[\left|\frac{\gamma^{\theta,\bar{c}, t}_{t+\theta}}{\theta}-(\bar{c}-c_t)\right|\right]+KT\frac{|\bar{c}-c_t|}{2}\E^t\left[\left|\gamma^{\theta,\bar{c}, t}_{t+\theta}\right|\right].
\end{split}
\end{equation*}
By taking limit of the above expression for $\theta\to 0$, by using $\eqref{relonlimsupgamma}$ and $\eqref{relonlimgamma}$ together with Step 1, we conclude the proof of Step 3 and the proof of the Lemma as well.
\end{proof}
\textbf{Proof of Theorem \ref{SMPtheorem}.} Let $t\in[0,\tau)$ be fixed. Since control $c$ is optimal, it necessarily follows that for any $\bar{c}\ge 0$ and for any $\theta>0$
\begin{equation*}
v^{c^{\theta,\bar{c}, t}}(t, x,q)\le v^c(t,x,q),
\end{equation*}
where
\begin{equation*}		
v^\pi(t,x,q) =\E^t\left[ g( X_{\tau^\pi}, Q_{\tau^\pi}^\pi)+ \int_t^{\tau^\pi} f(r, \pi_r, X_r, Q_r^\pi) \;dr\right]		
\end{equation*}		
and $\tau^\pi=T\wedge \min\{r\ge t\; | \; Q_r^\pi=0\}$. We write $Q$ as $Q^c$ and $\tau$ as $\tau^c$ as in Theorem \ref{SMPtheorem}.

Therefore, we necessarily have that for any $\bar{c}\ge 0$
\begin{equation}
\label{negativeJ}
\lim_{\theta\to 0} \frac{v^{c^{\theta,\bar{c}, t}}(t, x,q)-v^c(t,x,q)}{\theta}\le 0,
\end{equation}
provided the limit exists.
By definition of $v^\pi$, reminding that when $\tau^{\theta,\bar{c}, t}_{\min}=\tau^{\theta,\bar{c}, t}$, then for $r\ge \tau^{\theta,\bar{c}, t}$, $\hat{Q}^{\theta,\bar{c}, t}_r=Q_r$ and $\hat{c}^{\theta,\bar{c}, t}_r=c_r$ and when $\tau^{\theta,\bar{c}, t}_{\min}=\tau$, then for $r\ge \tau$, $\hat{Q}^{\theta,\bar{c}, t}_r=Q_r^{\theta,\bar{c}, t}$ and $\hat{c}^{\theta,\bar{c}, t}_r=c_r^{\theta,\bar{c}, t}$
\begin{equation}
\label{derivationepsposfirstpart}
\begin{split}
\lim_{\theta\to 0} \frac{v^{c^{\theta,\bar{c}, t}}(t, x,q)-v^c(t,x,q)}{\theta}&=\lim_{\theta\to 0} \E^t\left[ \frac{g(X_{\tau^{\theta,\bar{c}, t}}, Q^{\theta,\bar{c}, t}_{\tau^{\theta,\bar{c}, t}})-g(X_\tau, Q_\tau)}{\theta}\right]\\
&\qquad+\lim_{\theta\to 0} \E^t\left[ \frac{1}{\theta}\int_{t}^{\tau^{\theta,\bar{c}, t}_{\min}}\left(f(r, c^{\theta,\bar{c}, t}_r, X_r,Q^{\theta,\bar{c}, t}_t)-f(r, c_r, X_r,Q_r)\right) dr\right]\\
&\qquad+\lim_{\theta\to 0} \E^t\left[-\frac{\sign(\tau-\tau^{\theta,\bar{c}, t})}{\theta}\int_{\tau^{\theta,\bar{c}, t}_{\min}}^{\tau^{\theta,\bar{c}, t}_{\max}}f(r, \hat{c}^{\theta,\bar{c}, t}_r, X_r,\hat{Q}^{\theta,\bar{c}, t}_r) dr\right].
\end{split}
\end{equation}
The first line on the right-hand side of $\eqref{derivationepsposfirstpart}$ can be written as
\begin{equation}
\label{derivationepspos}
\begin{split}
&\lim_{\theta\to 0} \E^t\left[ \frac{g(X_{\tau^{\theta,\bar{c}, t}}, Q^{\theta,\bar{c}, t}_{\tau^{\theta,\bar{c}, t}})-g(X_\tau, Q_\tau)}{\theta}\right]=\lim_{\theta\to 0} \E^t\left[ \frac{g(X_{\tau^{\theta,\bar{c}, t}}, Q^{\theta,\bar{c}, t}_{\tau^{\theta,\bar{c}, t}})-g(X_\tau, Q^{\theta,\bar{c}, t}_{\tau^{\theta,\bar{c}, t}})}{\theta}\right]\\
&+\lim_{\theta\to 0} \E^t\left[ \frac{g(X_\tau, Q^{\theta,\bar{c}, t}_{\tau^{\theta,\bar{c}, t}_{\min}})-g(X_\tau, Q_{\tau^{\theta,\bar{c}, t}_{\min}})}{\theta}\right]+\lim_{\theta\to 0} \E^t\left[ \frac{g(X_\tau, \hat{Q}^{\theta,\bar{c}, t}_{\tau^{\theta,\bar{c}, t}})-g(X_\tau, \hat{Q}^{\theta,\bar{c}, t}_\tau)}{\theta}\right]\\
&+\lim_{\theta\to 0} \E^t\left[ \frac{g(X_\tau, Q^{\theta,\bar{c}, t}_{\tau^{\theta,\bar{c}, t}})-g(X_\tau, Q^{\theta,\bar{c}, t}_{\tau^{\theta,\bar{c}, t}_{\min}})+g(X_\tau, Q_{\tau^{\theta,\bar{c}, t}_{\min}})-g(X_\tau, \hat{Q}^{\theta,\bar{c}, t}_{\tau^{\theta,\bar{c}, t}})+g(X_\tau, \hat{Q}^{\theta,\bar{c}, t}_\tau)-g(X_\tau, Q_\tau)}{\theta}\right].
\end{split}
\end{equation}
Reminding that when $\tau^{\theta,\bar{c}, t}_{\min}=\tau^{\theta,\bar{c}, t}$, then $\hat{Q}^{\theta,\bar{c}, t}_{\tau^{\theta,\bar{c}, t}}=Q_{\tau^{\theta,\bar{c}, t}}$ and $\hat{Q}^{\theta,\bar{c}, t}_{\tau}=Q_\tau$ and when $\tau^{\theta,\bar{c}, t}_{\min}=\tau$, then $\hat{Q}^{\theta,\bar{c}, t}_{\tau^{\theta,\bar{c}, t}}=Q^{\theta,\bar{c}, t}_{\tau^{\theta,\bar{c}, t}}$ and $\hat{Q}^{\theta,\bar{c}, t}_{\tau}=Q^{\theta,\bar{c}, t}_{\tau}$, then we have that the last element on the right-hand side of $\eqref{derivationepspos}$ is equal to $0$. The first element on the right-hand side of $\eqref{derivationepspos}$ is equal to $-\bar{g}(t,\bar{c},x,q)$ by its definition $\eqref{defmubarpos}$. We define $\tilde{g}$ for any $(x,q)\in\mathcal{O}, (x,q')\in\mathcal{O}$ as
\begin{equation*}
\tilde{g}(x,q,q'):=\begin{cases}
\frac{g(x,q)-g(x,q')}{q-q'} &\text{if } q\ne q',\\
\partial_q g(x,q') &\text{if } q=q'.
\end{cases}
\end{equation*}
From Assumption \ref{assumptions} we have that $\tilde{g}$ is bounded by $K(1+|x|)$. The second element on the right-hand side of $\eqref{derivationepspos}$ is equal to
\begin{equation}
\label{eqq545third}
\begin{split}
\lim_{\theta\to 0} \E^t&\left[ \tilde{g}\left( X_\tau, Q^{\theta,\bar{c}, t}_{\tau^{\theta,\bar{c}, t}_{\min}}, Q_{\tau^{\theta,\bar{c}, t}_{\min}} \right)\frac{Q^{\theta,\bar{c}, t}_{\tau^{\theta,\bar{c}, t}_{\min}}-Q_{\tau^{\theta,\bar{c}, t}_{\min}}}{\theta}\right]\\
&=\lim_{\theta\to 0} \E^t\left[ \tilde{g}\left( X_\tau, Q^{\theta,\bar{c}, t}_{\tau^{\theta,\bar{c}, t}_{\min}}, Q_{\tau^{\theta,\bar{c}, t}_{\min}} \right)\left(\frac{Q^{\theta,\bar{c}, t}_{\tau^{\theta,\bar{c}, t}_{\min}}-Q_{\tau^{\theta,\bar{c}, t}_{\min}}}{\theta}+\bar{c}-c_t\right)\right]\\
&\qquad-\lim_{\theta\to 0} \E^t\left[ (\bar{c}-c_t)\left(\tilde{g}\left( X_\tau, Q^{\theta,\bar{c}, t}_{\tau^{\theta,\bar{c}, t}_{\min}}, Q_{\tau^{\theta,\bar{c}, t}_{\min}} \right)-\partial_q g\left(X_\tau,Q_{\tau^{\theta,\bar{c}, t}_{\min}}\right)\right)\right]\\
&\qquad- (\bar{c}-c_t) \lim_{\theta\to 0}\E^t\left[ \partial_q g\left(X_\tau,Q_{\tau^{\theta,\bar{c}, t}_{\min}}\right)-\partial_q g\left(X_\tau,Q_\tau\right)\right]\\
&\qquad-(\bar{c}-c_t) \E^t\left[ \partial_q g\left(X_\tau,Q_\tau\right)\right].
\end{split}
\end{equation}
Using H\"older's inequality, boundedness of $\tilde{g}$ and $\eqref{lemmaonqpos}$, we get
\begin{equation*}
\begin{split}
\lim_{\theta\to 0} \E^t&\left[ \left|\tilde{g}\left( X_\tau, Q^{\theta,\bar{c}, t}_{\tau^{\theta,\bar{c}, t}_{\min}}, Q_{\tau^{\theta,\bar{c}, t}_{\min}} \right)\left(\frac{Q^{\theta,\bar{c}, t}_{\tau^{\theta,\bar{c}, t}_{\min}}-Q_{\tau^{\theta,\bar{c}, t}_{\min}}}{\theta}+\bar{c}-c_t\right)\right|\right]\\
&\le K\left(\E^t\left[ \left(1+\left| X_\tau\right|\right)^4\right]\right)^{\frac 14}\lim_{\theta\to 0} \left(\E^t\left[ \left|\frac{Q^{\theta,\bar{c}, t}_{\tau^{\theta,\bar{c}, t}_{\min}}-Q_{\tau^{\theta,\bar{c}, t}_{\min}}}{\theta}+\bar{c}-c_t\right|^{\frac{4}{3}}\right]\right)^{\frac 34}=0.
\end{split}
\end{equation*}
Here we used standard arguments of SDE theory, i.e. $\E^t\left[\sup_{r\in[t,T]}|X_r|^4\right]<\infty$. Moreover, using Lemma \ref{lemma4} together with definition of $\tilde{g}$, we get that
\begin{equation*}
\begin{split}
\lim_{\theta\to 0} &\E^t\left[|\bar{c}-c_t|\left| \tilde{g}\left( X_\tau, Q^{\theta,\bar{c}, t}_{\tau^{\theta,\bar{c}, t}_{\min}}, Q_{\tau^{\theta,\bar{c}, t}_{\min}} \right)-\partial_q g\left(X_{\tau}, Q_{\tau^{\theta,\bar{c}, t}_{\min}}\right)\right|\right]\\
&\qquad\le \frac{K}{2}|\bar{c}-c_t|\lim_{\theta\to 0} \E^t\left[\left| Q^{\theta,\bar{c}, t}_{\tau^{\theta,\bar{c}, t}_{\min}}-Q_{\tau^{\theta,\bar{c}, t}_{\min}} \right|\mathds{1}_{Q^{\theta,\bar{c}, t}_{\tau^{\theta,\bar{c}, t}_{\min}}\ne Q_{\tau^{\theta,\bar{c}, t}_{\min}}}\right]=0,
\end{split}
\end{equation*}
where in the last line we used $\eqref{relonlimdiffQ}$ in Lemma \ref{lemmaonlimdiffQ}. Moreover, using Lipschitz continuity of $\partial_q g$ Lemma $\eqref{lemmaconvrvtheta}$ and that either $\tau^{\theta,\bar{c}, t}_{\min}=\tau^{\theta,\bar{c}, t}$ or $\tau^{\theta,\bar{c}, t}_{\min}=\tau$, we get that
\begin{equation*}
\begin{split}
\lim_{\theta\to 0} \E^t\left[ \left|\partial_q g\left(X_\tau,Q_{\tau^{\theta,\bar{c}, t}_{\min}}\right)-\partial_q g\left(X_\tau,Q_\tau\right)\right|\right]&\le K\lim_{\theta\to 0} \E^t\left[ \left|Q_{\tau^{\theta,\bar{c}, t}}-Q_\tau\right|\mathds{1}_{\tau^{\theta,\bar{c}, t}_{\min}=\tau^{\theta,\bar{c}, t}}\right]=0.
\end{split}
\end{equation*}
Hence, merging the last three expressions above into $\eqref{eqq545third}$, we get
\begin{equation}
\label{eqq545finalthird}
\begin{split}
\lim_{\theta\to 0} \E^t\left[ \frac{g(X_\tau, Q^{\theta,\bar{c}, t}_{\tau^{\theta,\bar{c}, t}_{\min}})-g(X_\tau, Q_{\tau^{\theta,\bar{c}, t}_{\min}})}{\theta}\right]=-(\bar{c}-c_t) \E^t\left[ \partial_q g\left(X_\tau,Q_\tau\right)\right].
\end{split}
\end{equation}
The third element on the right-hand side of $\eqref{derivationepspos}$ is equal to
\begin{equation}
\label{eqq545last}
\begin{split}
&\lim_{\theta\to 0} \E^t\left[ \tilde{g}\left( X_\tau, \hat{Q}^{\theta,\bar{c}, t}_{\tau^{\theta,\bar{c}, t}}, \hat{Q}^{\theta,\bar{c}, t}_\tau \right)\frac{\hat{Q}^{\theta,\bar{c}, t}_{\tau^{\theta,\bar{c}, t}}-\hat{Q}^{\theta,\bar{c}, t}_\tau}{\theta}\right]\\
&=\lim_{\theta\to 0} \E^t\left[ \tilde{g}\left( X_\tau, \hat{Q}^{\theta,\bar{c}, t}_{\tau^{\theta,\bar{c}, t}}, \hat{Q}^{\theta,\bar{c}, t}_\tau \right)\left(\frac{\hat{Q}^{\theta,\bar{c}, t}_{\tau^{\theta,\bar{c}, t}}-\hat{Q}^{\theta,\bar{c}, t}_\tau}{\theta}-(\bar{c}-c_t)\mathds{1}_{\Lambda(t,\bar{c})}\right)\right]\\
&\quad+\lim_{\theta\to 0} \E^t\left[ \left(\tilde{g}\left( X_\tau, \hat{Q}^{\theta,\bar{c}, t}_{\tau^{\theta,\bar{c}, t}}, \hat{Q}^{\theta,\bar{c}, t}_\tau \right)-\partial_q g\left(X_\tau,\hat{Q}^{\theta,\bar{c}, t}_\tau\right)\right)(\bar{c}-c_t)\mathds{1}_{\Lambda(t,\bar{c})}\right]\\
&\quad+\lim_{\theta\to 0} \E^t\left[ \left(\partial_q g\left(X_\tau,\hat{Q}^{\theta,\bar{c}, t}_\tau\right)-\partial_q g\left(X_\tau,Q_\tau\right)\right)(\bar{c}-c_t)\mathds{1}_{\Lambda(t,\bar{c})}\right]\\
&\quad+(\bar{c}-c_t) \E^t\left[\partial_q g\left(X_\tau,Q_\tau\right)\mathds{1}_{\Lambda(t,\bar{c})}\right].
\end{split}
\end{equation}
Using H\"older's inequality, boundedness of $\tilde{g}$ and Lemma \ref{lemma5}, we get
\begin{equation*}
\begin{split}
\lim_{\theta\to 0} \E^t&\left[ \left|\tilde{g}\left( X_\tau, \hat{Q}^{\theta,\bar{c}, t}_{\tau^{\theta,\bar{c}, t}}, \hat{Q}^{\theta,\bar{c}, t}_\tau \right)\left(\frac{\hat{Q}^{\theta,\bar{c}, t}_{\tau^{\theta,\bar{c}, t}}-\hat{Q}^{\theta,\bar{c}, t}_\tau}{\theta}-(\bar{c}-c_t)\mathds{1}_{\Lambda(t,\bar{c})}\right)\right|\right]\\
&\le K\left(\E^t\left[ \left(1+\left| X_\tau\right|\right)^4\right]\right)^{\frac 14}\lim_{\theta\to 0} \left(\E^t\left[ \left|\frac{\hat{Q}^{\theta,\bar{c}, t}_{\tau^{\theta,\bar{c}, t}}-\hat{Q}^{\theta,\bar{c}, t}_\tau}{\theta}-(\bar{c}-c_t)\mathds{1}_{\Lambda(t,\bar{c})}\right|^{\frac{4}{3}}\right]\right)^{\frac 34}=0.
\end{split}
\end{equation*}
Here we used standard arguments of SDE theory, i.e. $\E^t\left[\sup_{r\in[0,T]}|X_r|^4\right]<\infty$. Moreover, using Lemma \ref{lemma4} together with definition of $\tilde{g}$, we get that
\begin{equation*}
\begin{split}
\lim_{\theta\to 0} &\E^t\left[|\bar{c}-c_t|\left| \tilde{g}\left( X_\tau, \hat{Q}^{\theta,\bar{c}, t}_{\tau^{\theta,\bar{c}, t}}, \hat{Q}^{\theta,\bar{c}, t}_\tau \right)-\partial_q g\left(X_{\tau}, \hat{Q}^{\theta,\bar{c}, t}_\tau\right)\right|\mathds{1}_{\Lambda(t,\bar{c})}\right]\\
&\le \frac{K}{2}|\bar{c}-c_t|\lim_{\theta\to 0} \E^t\left[\left| \hat{Q}^{\theta,\bar{c}, t}_{\tau^{\theta,\bar{c}, t}}-\hat{Q}^{\theta,\bar{c}, t}_\tau \right|\mathds{1}_{\hat{Q}^{\theta,\bar{c}, t}_{\tau^{\theta,\bar{c}, t}}\ne\hat{Q}^{\theta,\bar{c}, t}_\tau}\right]=0,
\end{split}
\end{equation*}
where in the last line we used $\eqref{eqconvQhat}$ in Lemma \ref{lemmaconvQhat}. Moreover, using Lipschitz continuity of $\partial_q g$ in Assumption \ref{assumptions}, we have that
\begin{equation*}
\begin{split}
\lim_{\theta\to 0} \E^t&\left[ \left|(\bar{c}-c_t)\mathds{1}_{\Lambda(t,\bar{c})}\left(\partial_q g\left(X_\tau,\hat{Q}^{\theta,\bar{c}, t}_\tau\right)-\partial_q g\left(X_\tau,Q_\tau\right)\right)\right|\right]\\
&\le K|\bar{c}-c_t|\lim_{\theta\to 0}\E\left[ \left|\hat{Q}^{\theta,\bar{c}, t}_\tau-Q_\tau\right|\right]=0,
\end{split}
\end{equation*}
where in the last equality we have used $\eqref{eqconvQhatsingle}$ in Lemma \ref{lemmaconvQhat}. Hence, merging the last three expressions above into $\eqref{eqq545last}$, we get
\begin{equation*}
\begin{split}
\lim_{\theta\to 0} \E^t&\left[ \frac{g(X_\tau, \hat{Q}^{\theta,\bar{c}, t}_{\tau^{\theta,\bar{c}, t}})-g(X_\tau, \hat{Q}^{\theta,\bar{c}, t}_\tau)}{\theta}\right]=(\bar{c}-c_t) \E^t\left[\partial_q g\left(X_\tau,Q_\tau\right)\mathds{1}_{\Lambda(t,\bar{c})}\right].
\end{split}
\end{equation*}
Combining $\eqref{defmubarpos}$, $\eqref{eqq545finalthird}$ and the above expression into $\eqref{derivationepspos}$, we conclude that the first line of the right-hand side of $\eqref{derivationepsposfirstpart}$ is equal to
\begin{equation}
\label{eqq353}
\begin{split}
\lim_{\theta\to 0} &\E^t\left[ \frac{g(X_{\tau^{\theta,\bar{c}, t}}, Q^{\theta,\bar{c}, t}_{\tau^{\theta,\bar{c}, t}})-g(X_\tau, Q_\tau)}{\theta}\right]=-\bar{g}(t,\bar{c},x,q)-\E^t\left[(\bar{c}-c_t) \partial_q g(X_\tau, Q_\tau)\right]\\
&\qquad+(\bar{c}-c_t)\E^t\left[\partial_q g(X_\tau, Q_\tau) \mathds{1}_{\Lambda(t,\bar{c})}\right].
\end{split}
\end{equation}
The second and third lines of right-hand side of $\eqref{derivationepsposfirstpart}$ can be written as
\begin{equation}
\label{secondlinec27}
\begin{split}
&\lim_{\theta\to 0} \E^t\left[\frac{1}{\theta}\int_{t}^{\tau^{\theta,\bar{c}, t}_{\min}}\left(f(r, c^{\theta,\bar{c}, t}_r, X_r,Q^{\theta,\bar{c}, t}_r)-f(r, c_r, X_r,Q_r)\right) dr\right]\\
&\qquad\qquad-\lim_{\theta\to 0} \E^t\left[\frac{\sign(\tau-\tau^{\theta,\bar{c}, t})}{\theta}\int_{\tau^{\theta,\bar{c}, t}_{\min}}^{\tau^{\theta,\bar{c}, t}_{\max}}f(r, \hat{c}^{\theta,\bar{c}, t}_r, X_r,\hat{Q}^{\theta,\bar{c}, t}_r) dr\right]\\
&\qquad =\lim_{\theta\to 0} \E^t\left[\frac{1}{\theta}\int_{t}^{\tau^{\theta,\bar{c}, t}_{\min}}\left(f(r, c^{\theta,\bar{c}, t}_r, X_r,Q^{\theta,\bar{c}, t}_r)-f(r, c_r, X_r,Q_r)\right) dr-\xi_{\tau^{\theta,\bar{c}, t}_{\min}}\right]+\lim_{\theta\to 0} \E^t\left[\xi_{\tau^{\theta,\bar{c}, t}_{\min}}\right]\\
&\qquad\qquad-\lim_{\theta\to 0} \E^t\left[ \frac{\sign(\tau-\tau^{\theta,\bar{c}, t})}{\theta}\int_{\tau^{\theta,\bar{c}, t}_{\min}}^{\tau^{\theta,\bar{c}, t}_{\max}}f(r, \hat{c}^{\theta,\bar{c}, t}_r, X_r,\hat{Q}^{\theta,\bar{c}, t}_r) dr\right].
\end{split}
\end{equation}
Using Lemma \ref{lemma3onf}, we have that
\begin{equation*}
\lim_{\theta\to 0} \E^t\left[\frac{1}{\theta}\int_{t}^{\tau^{\theta,\bar{c}, t}_{\min}}\left(f(r, c^{\theta,\bar{c}, t}_r, X_r,Q^{\theta,\bar{c}, t}_r)-f(r, c_r, X_r,Q_r)\right) dr-\xi_{\tau^{\theta,\bar{c}, t}_{\min}}\right]=0.
\end{equation*}
Using Lemma \ref{lemmaconvrvtheta} ad reminding that either $\tau^{\theta,\bar{c}, t}_{\min}=\tau^{\theta,\bar{c}, t}$ or $\tau^{\theta,\bar{c}, t}_{\min}=\tau$, we have that
\begin{equation*}
\begin{split}
\lim_{\theta\to 0} \E^t\left[\xi_{\tau^{\theta,\bar{c}, t}_{\min}}\right]&=\lim_{\theta\to 0} \E^t\left[\left(\xi_{\tau^{\theta,\bar{c}, t}}-\xi_{\tau}\right)\mathds{1}_{\tau^{\theta,\bar{c}, t}_{\min}=\tau^{\theta,\bar{c}, t}}\right]+ \E^t\left[\xi_{\tau}\right]=\E^t\left[\xi_{\tau}\right].
\end{split}
\end{equation*}
Using $\eqref{deffbarpos}$, the third limit on the right-hand side of $\eqref{secondlinec27}$ converges to $\bar{f}(t,\bar{c},x,q)$. Combining the above two expressions and $\eqref{deffbarpos}$ into $\eqref{secondlinec27}$, we get that
\begin{equation*}
\begin{split}
&\lim_{\theta\to 0} \E^t\left[\frac{1}{\theta}\int_{t}^{\tau^{\theta,\bar{c}, t}_{\min}}\left(f(r, c^{\theta,\bar{c}, t}_r, X_r,Q^{\theta,\bar{c}, t}_r)-f(r, c_r, X_r,Q_r)\right) dr\right]\\
&\qquad\qquad-\lim_{\theta\to 0} \E^t\left[\frac{\sign(\tau-\tau^{\theta,\bar{c}, t})}{\theta}\int_{\tau^{\theta,\bar{c}, t}_{\min}}^{\tau^{\theta,\bar{c}, t}_{\max}}f(r, \hat{c}^{\theta,\bar{c}, t}_r, X_r,\hat{Q}^{\theta,\bar{c}, t}_r) dr\right]=\E^t\left[\xi_\tau\right]-\bar{f}(t,\bar{c},x,q).
\end{split}
\end{equation*}
Then, merging $\eqref{derivationepsposfirstpart}$  with $\eqref{eqq353}$ and the above expression, also noting  $X_t=x$ and $Q_t=q$, we get
\begin{equation}
\label{eqq1}
\begin{split}
\lim_{\theta\to 0} \frac{v^{c^{\theta,\bar{c}, t}}(t, x,q)-v^c(t,x,q)}{\theta}&= \E^t\left[-(\bar{c}-c_t) \partial_q g(X_\tau, Q_\tau) +\xi_\tau\right]+\mathcal{G}(t,\bar{c},X_t,Q_t),
\end{split}
\end{equation}
where $\mathcal{G}(t,\bar{c},x,q)$ is defined in (\ref{defGcal}). 

However, from $\eqref{defY}$ and $\eqref{defxi}$,  we have 
\begin{equation*}
\begin{split}
\E^t&\left[ -(\bar{c}-c_t)\partial_q g(X_\tau, Q_\tau) +\xi_\tau\right]= \E^t\left[-(\bar{c}-c_t) Y_\tau +\xi_\tau\right]\\
&= \E^t\left[-(\bar{c}-c_t) Y_t -(\bar{c}-c_t)\int_t^\tau dY_r+\xi_t+\int_t^\tau d\xi_r\right] \\
&=\E^t\left[-(\bar{c}-c_t) Y_t + f(t,  \bar{c}, X_{t}, Q_{t})-f(t, c_{t}, X_{t}, Q_{t})\right].
\end{split}
\end{equation*}
In the last equality we have used the Optional Stopping Theorem, which ensures that $\int_t^\cdot  Z_r dW_r$ is a martingale, whose conditional expectation is $0$. Substituting the above expression into $\eqref{eqq1}$ and using the optimality condition  $\eqref{negativeJ}$, we get that for any $\bar{c}\ge 0$ and $t\in[0,\tau)$
\begin{equation*}
\begin{split}
&\E^t\left[-(\bar{c}-c_t) Y_t + f(t,  \bar{c}, X_{t}, Q_{t})-f(t, c_{t}, X_{t}, Q_{t})\right]+\mathcal{G}(t,\bar{c},X_t,Q_t)\le 0.
\end{split}
\end{equation*}
Since the argument of the first conditional expectation is $\mathcal{F}^t$-measurable, using $\mathcal{H}$ in $\eqref{defHcal}$, we get the Hamiltonian condition $\eqref{SMP}$. This concludes the proof of Theorem \ref{SMPtheorem}.

\section{Conclusions}
\label{sectionconclusion}
In this paper we have proved a new SMP (Theorem \ref{SMPtheorem}) for an optimal liquidation problem with control-dependent terminal time, which is markedly different in the Hamiltonian condition from that of the standard SMP. We have given a simple example to show that the optimal solution satisfies the SMP in  Theorem \ref{SMPtheorem} but not the standard SMP in the literature. This is only the first step in the direction of SMP for control-dependent stopping time problems and there remain many open questions to be answered, for example,  
existence of pointwise limits $\eqref{defmubarpos}$ and $\eqref{deffbarpos}$, sufficient SMP for optimality,  a jump diffusion control-dependent model for $\X$ process, and applications to concrete financial scenarios. We leave these and other questions for future research.

\bigskip\noindent
{\bf Acknowledgment}. The authors are very grateful to  two anonymous reviewers whose constructive comments and suggestions have helped to improve the paper of the previous three versions.

\bibliographystyle{abbrv}

\end{document}